\newtheorem{thm}{\sc Theorem.}[section]
\newtheorem{lem}[thm]{\sc Lemma.}
\newtheorem{rem}[thm]{\sc Remark.}
\renewcommand{\theequation}{\arabic{section}.\arabic{equation}}
{{\upshape\bfseries AMS subject classifications. }\ignorespaces}{}
\newenvironment{keywords}{{\upshape\bfseries Key words. }\ignorespaces}{}
\newcommand{\Rgeq}{{\mathbb R}_{\geq 0}}
\newcommand{\R}{{\mathbb R}}
\newcommand{\Ds}{\mathcal{D}_\Gamma}
\newcommand{\spa}{\operatorname{span}}
\newcommand{\vol}{\mathcal{L}^d} 
\DeclareMathOperator{\esssup}{ess\,sup}
\newcommand{\Mloss}{\mathcal{L}_{\rm loss}}
\newcommand{\GT}{{\mathcal{G}_T}}
\newcommand{\GhT}{{\mathcal{G}^h_T}}
\newcommand{\dH}[1]{\;{\rm d}{\cal H}^{#1}} 
\newcommand{\dL}[1]{\;{\rm d}{\cal L}^{#1}} 
\newcommand{\bigchi}{\ensuremath{\mathrm{\mathcal{X}}}}
\newcommand{\charfcn}[1]{\bigchi_{#1}} 
\newcommand{\Domain}{\Omega}
\newcommand{\Vhz}{\underline{V}(\Gamma^0)}
\newcommand{\Whp}{W(\Gamma^{m+1})}
\newcommand{\Vh}{\underline{V}(\Gamma^m)}
\newcommand{\Wh}{W(\Gamma^m)}
\newcommand{\Vht}{\underline{V}(\Gamma^h(t))}
\newcommand{\Wht}{W(\Gamma^h(t))}
\newcommand{\Whtz}{W(\Gamma^h(0))}
\newcommand{\uspace}{\mathbb{U}}
\newcommand{\utimespace}{\mathbb{V}}
\newcommand{\pspace}{\mathbb{P}}
\newcommand{\psispace}{\mathbb{S}}
\newcommand{\sigmaO}{o}
\newcommand{\nabs}{\nabla_{\!s}}
\newcommand{\Id}{I\!d}
\newcommand{\id}{\rm id}
\newcommand{\ddt}{\frac{\rm d}{{\rm d}t}}
\newcommand{\matpartu}{\partial_t^\bullet}
\newcommand{\matpartx}{\partial_t^\circ}
\newcommand{\matpartxh}{\partial_t^{\circ,h}}
\newcommand{\LerrorPsipsi}{\|\Psi - \psi\|_{L^2}}
\newcommand{\unitn}{\vec{\rm n}}
\newcommand{\unitt}{\vec{\rm t}}
\newcommand{\ek}{e}
\newcommand{\strikec}{c\!\!\!\!\:/}
\newcommand{\strikes}{s\!\!\!\!\:/}
\newcommand{\XFEMGAMMA}{XFEM$_\Gamma$}
\def\epsilon{\varepsilon} 
\newcommand{\mat}[1]{\underline{\underline{#1}}\rule{0pt}{0pt}}
\def\vL{L\kern-0.08cm\char39}
\begin{document}
\title{
On the Stable Numerical Approximation of 
Two-Phase Flow with Insoluble Surfactant}
\author{John W. Barrett\footnotemark[2] \and 
        Harald Garcke\footnotemark[3]\ \and 
        Robert N\"urnberg\footnotemark[2]}

\renewcommand{\thefootnote}{\fnsymbol{footnote}}
\footnotetext[2]{Department of Mathematics, 
Imperial College London, London, SW7 2AZ, UK}
\footnotetext[3]{Fakult{\"a}t f{\"u}r Mathematik, Universit{\"a}t Regensburg, 
93040 Regensburg, Germany}

\date{}

\maketitle

\begin{abstract}
We present a parametric finite element approximation of two-phase flow
with insoluble surfactant. This
free boundary problem is given by the Navier--Stokes equations 
for the two-phase flow in the bulk, which are coupled to the
transport equation for the insoluble surfactant on the interface that separates
the two phases. We combine the evolving surface finite element method
with an approach previously introduced by the authors for two-phase
Navier--Stokes flow, which maintains good mesh properties.
The derived finite element approximation of
two-phase flow with insoluble surfactant can be shown to be 
stable. Several numerical simulations demonstrate the 
practicality of our numerical method.
\end{abstract} 

\begin{keywords} 
incompressible two-phase flow, insoluble surfactants, finite elements, 
front tracking, ALE \mbox{ESFEM}
\end{keywords}

\renewcommand{\thefootnote}{\arabic{footnote}}

\section{Introduction} \label{sec:intro}

The presence of surface active agents (surfactants) has a noticeable
effect on the deformation of fluid-fluid interfaces, because these
impurities lower the surface tension. In addition, surfactant
gradients along the fluid-fluid interface cause tangential stresses
leading to fluid motion (the Marangoni effect). As a result, the presence
of surfactants can have a dramatic effect on droplet shapes during their
evolution. Surfactants are applied in a wide range of
technologies to increase the efficiency of wetting agents, detergents,
foams and emulsion stabilisers.

In this paper we study the effect of an insoluble surfactant in a two-phase 
flow. The mathematical model consists of the Navier--Stokes
equations in the two phases, together with jump conditions at the free
boundary separating the two phases. In particular, the Laplace--Young
condition has to hold, which is a force balance involving forces
resulting from the two fluids. These forces are expressed with the help of
the stress tensor as well as surface tension forces and tangential
Marangoni forces, where the latter two involve the surfactant
concentration. The insoluble surfactant is transported on the
interface by advection and possibly by diffusion. The overall system
is quite complex, as a free boundary problem for the
Navier--Stokes equations and an advection-diffusion equation on the
evolving interface have to be solved simultaneously.

The mathematical analysis for the two-phase fluid flow problem with
surfactants is still in its early stages. 
We refer to \cite{GarckeW06}, who
showed a dissipation inequality for free surface flow with an
insoluble surfactant, 
and to \cite{BothePS05,BotheKP12preprint,BotheP10a},
where well-posedness and
stability of equilibria for two-phase flows with soluble
surfactants was shown. 
In particular, in \cite{BotheP10a} an energy inequality
was crucial in order to study the stability of equilibria. 
In this paper, it is our aim to develop a numerical
method that 
fulfills a discrete variant of this energy
inequality and, in addition, conserves the surfactant mass and
the volume of the two phases. Here we note that many of the existing
numerical methods for two-phase flow with insoluble surfactant may lose 
mass of one of the fluid phases, or may face stability issues.
In fact, to our knowledge, the numerical method presented in this paper is the
first approximation of two-phase flow with insoluble surfactant in the
literature that can be shown to satisfy a discrete energy law.

Different interface capturing and interface tracking methods have been
used to numerically compute two-phase flows with (in-)soluble
surfactants. Popular such approaches are volume of fluid methods,
\cite{RenardyRC02,JamesL04,Drumright-ClarkeR04,AlkeB09}; level set
methods, \cite{XuLLZ06,TeigenM10,GrossR11,XuYL12}; front tracking methods,
\cite{MuradogluT08,LaiTH08,KhatriT11,XuHLL14} and
arbitrary Lagrangian-Eulerian methods, \cite{Pozrikidis04,YangJ07,GanesanT09a}.
Another approach to model and numerically simulate two-phase fluids
involving surfactants involves diffuse interface approaches and we
refer to \cite{TeigenLLWV09,ElliottSSW11,GarckeLS13preprint} and
\cite{EngblomD-QAT13} for details.

In this work we use parametric finite elements to describe the fluid-fluid
interface 
with an unfitted coupling to the fluid flow in the bulk, 
which is also discretized with the help of finite elements. 
Unfitted in this context means that the mesh points used to describe the 
interface are not, in general, mesh points of the underlying bulk finite element mesh. 
Our approach is based on
earlier work by the authors on two-phase flow for incompressible
Stokes and Navier--Stokes flow involving surface tension effects, see
\cite{spurious,fluidfbp} for details. As mentioned above, apart from 
capturing the interface in a two-phase flow, one also has to accurately 
capture the advection and diffusion of the surfactant on the interface. 
Here we make
use of a variant of the evolving surface finite element method (ESFEM)
introduced by \cite{DziukE07,DziukE13}. In order to accurately discretize the
advection-diffusion equation on the evolving interface, it
is important to evolve the grid points representing the interface in
such a way, that the mesh does not degenerate. 
In particular, 
it is important to avoid the coalescence of vertices or a velocity induced 
coarsening at parts of the interface, see e.g.\ Figures~\ref{fig:dziuk} 
and \ref{fig:dziuknoni} in Section~\ref{sec:6}. 
It turns out that
moving vertices with the fluid velocity or with the normal part of the
fluid velocity typically leads to mesh degeneracies. Hence in this paper we
follow the approach from \cite{fluidfbp} and allow the grid points to have a
tangential velocity that is independent of the surrounding fluid motion.
We note that the idea to allow for an implicit, nonzero discrete tangential
velocity goes back to earlier work by the present authors,
who introduced novel numerical methods with excellent mesh properties
for curvature driven flows and moving boundary problems in e.g.\
\cite{triplej,gflows3d,dendritic}. In fact, we are able
to show that our semidiscrete continuous-in-time finite element approximations
lead to {\it equidistributed mesh points} on the interface in two space
dimensions, and to {\it conformal polyhedral surfaces},
which also have good mesh properties,
in three space dimensions.
Using this
approach also ensures that, due to the good mesh properties, the surface
partial differential equation for the insoluble surfactant can be
solved accurately. 

An important issue in surface tension driven flows is to compute
curvature quantities with the help of the chosen interface
representation. Our approach uses a parametric approximation of the
interface, and hence we use a variant of an idea by Dziuk to compute
the mean curvature. In fact \cite{Dziuk91} uses the identity 
\begin{equation} \label{eq:GDid}
\Delta_s\,\vec\id = \vec\varkappa\,,
\end{equation}
where $\Delta_s$ is the Laplace--Beltrami
operator and $\vec\varkappa$ is the mean curvature vector, in a discrete
setting to compute an approximation of the mean curvature. This idea
was used by \cite{Bansch01} for an approximation of free capillary flows, 
and by \cite{BaumlerB13} for two-phase flows.
A discretization of a variant of (\ref{eq:GDid}) was used by the 
present authors in \cite{spurious,fluidfbp}
to derive approximations of two-phase flow with better mesh properties. 
As mentioned above, this approach leads to tangential
motions for the mesh points on the interface that are independent of the fluid
motion. This has to be taken into account when solving the advection-diffusion
equation on the interface, and in our case we naturally obtain the so-called 
arbitrary Lagrangian Eulerian evolving surface finite element
method (ALE ESFEM), see \cite{ElliottS12}.

The structure of this article is as follows. In the next section we first state the
mathematical formulation of the problem and discuss the relevant
conserved quantities and an energy identity. In addition, different
weak formulations are introduced which form the basis for the finite
element approximations in Section~\ref{sec:3}. We state two different
finite element approximations in a semidiscrete and in a fully
discrete form. The first method uses the curvature discretization
of \cite{Dziuk91} and \cite{Bansch01}, while the second method
uses the curvature discretization introduced by the present authors in 
\cite{triplej,gflows3d,fluidfbp}. 

Both methods, in their semidiscrete form, conserve the total
surfactant concentration and allow for an energy inequality in two
space dimensions. In addition, the variant based on Dziuk's curvature
discretization allows for a discrete maximum principle for the surfactant
approximation. On the other hand, the approach that uses
the curvature discretization of the present authors leads to {\it good mesh
properties} and to {\it exactly conserved volumes} of the two fluids. For the
fully discrete approximations {\it existence and uniqueness} as well as
{\it conservation of the total surfactant concentration} can be
shown. Finally we present several numerical simulations in two and
three space dimensions in Section~\ref{sec:6}, which in particular show
the effect of surfactants on the interface evolution.

\section{Mathematical formulation} \label{sec:1}

\subsection{Governing equations} \label{sec:11}

Let $\Domain\subset\mathbb{R}^d$ be a given domain,  where $d=2$ or $d=3$. 
We now seek a time dependent interface $(\Gamma(t))_{t\in[0,T]}$,
$\Gamma(t)\subset\Domain$, 
which for all $t\in[0,T]$ separates
$\Domain$ into a domain $\Omega_+(t)$, occupied by one phase,
and a domain $\Omega_{-}(t):=\Domain\setminus\overline{\Omega_+(t)}$, 
which is occupied by the other phase. Here the phases could represent
two different liquids, or a liquid and a gas. Common examples are oil/water
or water/air interfaces.
See Figure~\ref{fig:sketch} for an illustration.
\begin{figure}
\begin{center}
\ifpdf
\includegraphics[angle=0,width=0.4\textwidth]{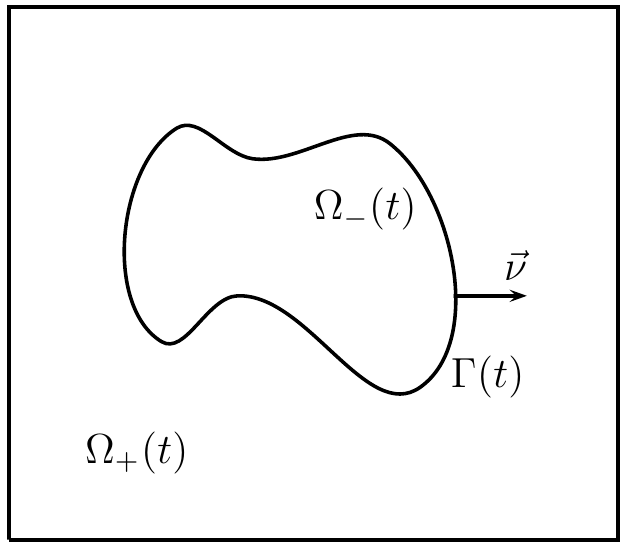}
\else
\unitlength15mm
\psset{unit=\unitlength,linewidth=1pt}
\begin{picture}(4,3.5)(0,0)
\psline[linestyle=solid]{-}(0,0)(4,0)(4,3.5)(0,3.5)(0,0)
\psccurve[showpoints=false,linestyle=solid] 
 (1,1.3)(1.5,1.6)(2.7,1.0)(2.5,2.6)(1.6,2.5)(1.1,2.7)
\psline[linestyle=solid]{->}(2.92,1.6)(3.4,1.6)
\put(3.25,1.7){{\black $\vec\nu$}}
\put(2.9,1.0){{$\Gamma(t)$}}
\put(2,2.1){{$\Omega_-(t)$}}
\put(0.5,0.5){{$\Omega_+(t)$}}
\end{picture}
\fi
\end{center}
\caption{The domain $\Omega$ in the case $d=2$.}
\label{fig:sketch}
\end{figure}%
For later use, we assume that
$(\Gamma(t))_{t\in [0,T]}$ 
is a sufficiently smooth evolving hypersurface without boundary that is
parameterized by $\vec{x}(\cdot,t):\Upsilon\to\R^d$,
where $\Upsilon\subset \R^d$ is a given reference manifold, i.e.\
$\Gamma(t) = \vec{x}(\Upsilon,t)$. Then
\begin{equation} \label{eq:V}
\vec{\mathcal{V}}(\vec z, t) := \vec x_t(\vec q, t)
\qquad \forall\ \vec z = \vec x(\vec q,t) \in \Gamma(t)
\end{equation}
defines the velocity of $\Gamma(t)$, and
$\vec{\mathcal{V}} \,.\,\vec{\nu}$ is
the normal velocity of the evolving hypersurface $\Gamma(t)$,
where $\vec\nu(t)$ is the unit normal on $\Gamma(t)$ pointing into 
$\Omega_+(t)$.
Moreover, we define the space-time surface
\begin{equation} \label{eq:GT}
\GT := \bigcup_{t \in [0,T]} \Gamma(t) \times \{t\}\,.
\end{equation}

Let $\rho(t) = \rho_+\,\charfcn{\Omega_+(t)} + \rho_-\,\charfcn{\Omega_-(t)}$,
with $\rho_\pm \in \R_{>0}$, 
denote the fluid densities, where here and
throughout $\charfcn{\mathcal{A}}$ defines the characteristic function for a
set $\mathcal{A}$.
Denoting by $\vec u : \Omega \times [0, T] \to \R^d$ the fluid velocity,
by $\mat\sigma : \Omega \times [0,T] \to \R^{d \times d}$ the stress tensor,
and by $\vec f : \Omega \times [0, T] \to \R^d$ a possible forcing,
the incompressible Navier--Stokes equations in the two phases are given by
\begin{subequations}
\begin{alignat}{2}
\rho\,(\vec u_t + (\vec u \,.\,\nabla)\,\vec u)
- \nabla\,.\,\mat\sigma & = \vec f := \rho\,\vec f_1 + \vec f_2
\qquad &&\mbox{in } 
\Omega_\pm(t)\,, \label{eq:NSa} \\
\nabla\,.\,\vec u & = 0 \qquad &&\mbox{in } \Omega_\pm(t)\,, \label{eq:NSb} \\
\vec u & = \vec 0 \qquad &&\mbox{on } \partial_1\Omega\,, \label{eq:NSc} \\
\vec u \,.\,\unitn = 0\,,\quad
\mat\sigma\,\unitn \,.\,\unitt& = 0 
\quad\forall\ \unitt \in \{\unitn\}^\perp 
\qquad &&\mbox{on } \partial_2\Omega\,, 
\label{eq:NSd} 
\end{alignat}
\end{subequations}
where $\partial\Domain = \partial_1\Omega \cup
\partial_2\Omega$, with $\partial_1\Omega \cap \partial_2\Omega
=\emptyset$, denotes the boundary of $\Domain$ with outer unit normal $\unitn$
and $\{\unitn\}^\perp := \{ \unitt \in \R^d : \unitt \,.\,\unitn = 0\}$.
Hence (\ref{eq:NSc}) prescribes a no-slip condition on 
$\partial_1\Omega$, while (\ref{eq:NSd}) prescribes a free-slip condition on 
$\partial_2\Omega$. 
In addition, the stress tensor in (\ref{eq:NSa}) is defined by
\begin{equation} \label{eq:sigma}
\mat\sigma = \mu \,(\nabla\,\vec u + (\nabla\,\vec u)^T) - p\,\mat\Id
= 2\,\mu\, \mat D(\vec u)-p\,\mat\Id\,,
\end{equation}
where $\mat\Id \in \R^{d \times d}$ denotes the identity matrix,
$\mat D(\vec u):=\frac12\, (\nabla\vec u+(\nabla\vec u)^T)$ is the
rate-of-deformation tensor,
$p : \Omega \times [0, T] \to \R$ is the pressure and
$\mu(t) = \mu_+\,\charfcn{\Omega_+(t)} + \mu_-\,\charfcn{\Omega_-(t)}$,
with $\mu_\pm \in \R_{>0}$, denotes the dynamic viscosities in the two 
phases.
On the free surface $\Gamma(t)$, the following conditions need to hold:
\begin{subequations}
\begin{alignat}{2}
[\vec u]_-^+ & = \vec 0 \qquad &&\mbox{on } \Gamma(t)\,, \label{eq:1a} \\ 
[\mat\sigma\,\vec \nu]_-^+ & = -\gamma(\psi)\,\varkappa\,\vec\nu 
- \nabs\,\gamma(\psi)
\qquad &&\mbox{on } \Gamma(t)\,, \label{eq:1b} \\ 
\vec{\mathcal{V}}\,.\,\vec\nu &= \vec u\,.\,\vec \nu 
\qquad &&\mbox{on } \Gamma(t)\,, \label{eq:1c} 
\end{alignat}
\end{subequations}
where $\gamma \in C^{1}([0,\psi_\infty))$, with $\psi_\infty \in
\R_{>0}\cup\{\infty\}$ and
\begin{equation} \label{eq:gammaprime}
\gamma'(r) \leq 0 \qquad \forall\ r \in [0,\psi_\infty)\,,
\end{equation}
denotes the surface tension which depends on 
the surfactant concentration $\psi : \GT \to [0,\psi_\infty)$, 
recall (\ref{eq:GT}), 
and $\nabs$ denotes the surface gradient on $\Gamma(t)$.
In addition, 
$\varkappa$ denotes the mean curvature of $\Gamma(t)$, i.e.\ the sum of
the principal curvatures of $\Gamma(t)$, where we have adopted the sign
convention that $\varkappa$ is negative where $\Omega_-(t)$ is locally convex.
In particular, on letting $\vec \id$ denote the identity function in $\R^d$,
it holds that
\begin{equation} \label{eq:LBop}
\Delta_s\, \vec \id = \varkappa\, \vec\nu =: \vec\varkappa
\qquad \mbox{on $\Gamma(t)$}\,,
\end{equation}
where $\Delta_s = \nabs\,.\,\nabs$ is the Laplace--Beltrami operator on 
$\Gamma(t)$, with $\nabs\,.\,$ denoting surface divergence on $\Gamma(t)$.
Moreover, as usual, $[\vec u]_-^+ := \vec u_+ - \vec u_-$ and
$[\mat\sigma\,\vec\nu]_-^+ := \mat\sigma_+\,\vec\nu - \mat\sigma_-\,\vec\nu$
denote the jumps in velocity and normal stress across the interface
$\Gamma(t)$. Here and throughout, we employ the shorthand notation
$\vec g_\pm := \vec g\!\mid_{\Omega_\pm(t)}$ for a function 
$\vec g : \Omega \times [0,T] \to \R^d$; and similarly for scalar and
matrix-valued functions. The surfactant transport (with diffusion) on
$\Gamma(t)$ is then given by
\begin{equation} \label{eq:1surf}
\matpartu\,\psi + \psi\,\nabs\,.\vec u - \nabs\,.\,(\Ds\,\nabs\,\psi) = 0
\qquad \mbox{on } \Gamma(t)\,,
\end{equation}
where $\Ds \geq 0$ is a diffusion coefficient, 
and where
\begin{equation} \label{eq:matpartu}
\matpartu\, \zeta = \zeta_t + \vec u \,.\,\nabla\,\zeta
\qquad \forall\ \zeta \in H^1(\GT)
\end{equation}
denotes the material time derivative of $\zeta$ on $\Gamma(t)$. Here we stress
that the derivative in (\ref{eq:matpartu}) is well-defined, and depends only on
the values of $\zeta$ on $\GT$, even though $\zeta_t$ and
$\nabla\,\zeta$ do not make sense separately; see e.g.\
\citet[p.\ 324]{DziukE13}. 
The system (\ref{eq:NSa}--d), (\ref{eq:sigma}), (\ref{eq:1a}--c),
(\ref{eq:1surf}) is closed with the initial conditions
\begin{equation} \label{eq:1d}
\Gamma(0) = \Gamma_0 \,, \qquad 
\psi(\cdot,0) = \psi_0 \qquad \mbox{on } \Gamma_0\,,\qquad
\vec u(\cdot,0) = 
\vec u_0 \qquad \mbox{in } \Omega\,,
\end{equation}
where $\Gamma_0 \subset \Omega$,
$\vec u_0 : \Omega \to \R^d$ 
and $\psi_0 : \Gamma_0 \to [0,\psi_\infty)$
are given initial data. 

For later purposes, we introduce the 
surface energy function $F$, which satisfies
\begin{subequations}
\begin{equation} \label{eq:F}
\gamma(r) = F(r) - r\,F'(r) \qquad \forall\ r \in (0,\psi_\infty) \,,
\end{equation}
and
\begin{equation} \label{eq:F0}
\lim_{r\to0} r\,F'(r) = F(0) - \gamma(0) = 0\,.
\end{equation}
\end{subequations}
This means in particular that 
\begin{equation} \label{eq:Fdd}
\gamma'(r) = - r\,F''(r) \qquad \forall\ r \in (0,\psi_\infty)\,. 
\end{equation}
It immediately follows from (\ref{eq:Fdd}) and (\ref{eq:gammaprime}) that
$F \in C([0,\psi_\infty)) \cap C^2(0,\psi_\infty)$ is convex.
Typical examples for $\gamma$ and $F$ are given by
\begin{subequations}
\begin{equation} \label{eq:gamma1}
\gamma(r) = \gamma_0 \,( 1 - \beta\,r) \,,\quad
F(r) = \gamma_0\left[1+\beta\,r\left(\ln r 
- 1 \right) \right]\,,\ \psi_\infty = \infty\,,
\end{equation}
which represents a linear equation of state, and by
\begin{equation} \label{eq:gamma2}
\gamma(r) = \gamma_0 \left[ 1 + \beta\,\psi_\infty\,\ln \left( 
1 - \tfrac{r}{\psi_\infty} \right) \right] , \quad
F(r) = \gamma_0\left[ 1 + 
\beta\left( r\,\ln \tfrac{r}{\psi_\infty-r} + \psi_\infty\,\ln
\tfrac{\psi_\infty-r}{\psi_\infty} \right) \right],
\end{equation}
\end{subequations}
the so-called Langmuir equation of state,
where $\gamma_0 \in \R_{>0}$ and $\beta \in \Rgeq$ are further given 
parameters, where we note that the special case $\beta = 0$ means that
(\ref{eq:gamma1},b) reduce to
\begin{equation} \label{eq:Fconst}
F(r) = \gamma(r) = \gamma_0 \in \R_{>0}\qquad \forall\ r \in \R\,.
\end{equation}
Moreover, we observe that (\ref{eq:gamma1}) can be viewed as a linearization of
(\ref{eq:gamma2}) in the sense that $\gamma$ in (\ref{eq:gamma1}) is 
affine, and $\gamma$ and $\gamma'$ agree at the origin with 
$\gamma$ and $\gamma'$ from (\ref{eq:gamma2}).

\subsection{Weak formulation}\label{sec:12}

Before introducing our finite element approximation, 
we will state an appropriate weak formulation. With this in mind, 
we introduce the function spaces
\begin{align*}
\uspace & := \{ \vec\varphi \in [H^1(\Omega)]^d : 
\vec\varphi = \vec0 \ \mbox{ on } 
\partial_1\Omega\,,\ \vec\varphi \,.\,\unitn = 0 \ \mbox{a.e. on } \partial_2\Omega
 \} \,,
\quad \pspace := L^2(\Omega)\,, \\ 
\widehat\pspace & := \{\eta \in \pspace : \int_\Omega\eta \dL{d}=0 \}\,,\quad
\utimespace :=  L^2(0,T; \uspace) \cap H^1(0,T;[L^2(\Omega)]^d)\,,\quad
\psispace := H^1(\GT)\,.
\end{align*}
Let $(\cdot,\cdot)$ and $\langle \cdot, \cdot \rangle_{\Gamma(t)}$
denote the $L^2$--inner products on $\Omega$ and $\Gamma(t)$, respectively.
We recall from \cite{fluidfbp} that it follows from (\ref{eq:NSb}--d) and
(\ref{eq:1c}) that
\begin{align}
( \rho\,(\vec u \,.\,\nabla)\,\vec u, \vec \xi)
&= \tfrac12 \left[ (\rho\,(\vec u\,.\,\nabla)\,\vec u, \vec \xi) -
(\rho\,(\vec u\,.\,\nabla)\,\vec \xi,\vec u)
-\left\langle [\rho]_-^+\,\vec u\,.\,\vec \nu, 
  \vec u\,.\,\vec \xi \right\rangle_{\Gamma(t)} \right]
\nonumber \\ & \hspace{9cm} 
\forall\ \vec \xi \in [H^1(\Omega)]^d
\label{eq:advect}
\end{align}
and
\begin{align*} 
\ddt(\rho \,\vec u, \vec \xi) 
&= (\rho\,\vec u_t, \vec \xi)
+ ( \rho\,\vec u, \vec \xi_t)
- \left\langle [\rho]_-^+\,\vec u\,.\,\vec \nu, 
\vec u \,.\,\vec \xi \right\rangle_{\Gamma(t)}
\qquad \forall \ 
\vec \xi \in \utimespace\,, 
\end{align*}
respectively.
Therefore, it holds that
\begin{equation*} 
(\rho\,\vec u_t, \vec \xi) = 
\tfrac{1}{2} \left[
\ddt (\rho\,\vec u,\vec \xi) + (\rho\,\vec u_t, \vec \xi)
- ( \rho\,\vec u, \vec \xi_t)
+ \left\langle [\rho]_-^+\,\vec u\,.\,\vec \nu, 
\vec u \,.\,\vec \xi \right\rangle_{\Gamma(t)} 
\right]
\qquad \forall\ \vec \xi \in \utimespace\,,
\end{equation*}
which on combining with (\ref{eq:advect}) yields that
\begin{align} \label{eq:rhot3}
& (\rho\,[\vec u_t + (\vec u \,.\,\nabla)\,\vec u], \vec \xi)
\nonumber \\ & \qquad
= \tfrac12\left[ \ddt (\rho\,\vec u, \vec \xi) + (\rho\,\vec u_t, \vec \xi)
- ( \rho\,\vec u, \vec \xi_t)
+ (\rho, [(\vec u\,.\,\nabla)\,\vec u]\,.\,\vec \xi
- [(\vec u\,.\,\nabla)\,\vec \xi]\,.\,\vec u) \right]
\quad \forall\ \vec \xi \in \utimespace\,.
\end{align}
Moreover, it holds on noting (\ref{eq:NSd}) and (\ref{eq:1b}) 
that for all $\vec \xi \in \uspace$ 
\begin{align}
& \int_{\Omega_+(t)\cup\Omega_-(t)} (\nabla\,.\,\mat\sigma)\,.\, \vec \xi 
\dL{d} 
= - 2\,(\mu\,\mat D(\vec u), \mat D(\vec \xi)) + (p, \nabla\,.\,\vec \xi)
+ \left\langle \gamma(\psi)\,\varkappa\,\vec \nu + \nabs\,\gamma(\psi), 
\vec \xi \right\rangle_{\Gamma(t)}.
\label{eq:sigmaibp}
\end{align}

Similarly to (\ref{eq:matpartu}) we define the following time derivative that
follows the parameterization $\vec x(\cdot, t)$ of $\Gamma(t)$, rather than
$\vec u$. In particular, we let
\begin{equation} \label{eq:matpartx}
\matpartx\, \zeta = \zeta_t + \vec{\mathcal{V}} \,.\,\nabla\,\zeta
\qquad \forall\ \zeta \in \psispace\,,
\end{equation}
recall (\ref{eq:V}). 
Here we stress once again that this definition is well-defined, even though
$\zeta_t$ and $\nabla\,\zeta$ do not make sense separately for a
function $\zeta \in \psispace$.
On recalling (\ref{eq:matpartu}) we obtain that
\begin{equation} \label{eq:matpartux}
\matpartx = \matpartu  \qquad\text{if}\qquad \vec{\mathcal{V}} = \vec u
\quad \text{on } \Gamma(t)\,.
\end{equation}
We note that the definition (\ref{eq:matpartx}) 
differs from the definition of $\partial^\circ$ in 
\citet[p.\ 327]{DziukE13}, where
$\partial^\circ\,\zeta = \zeta_t + (\vec{\mathcal{V}}\,.\,\vec \nu)\, 
\vec\nu\,.\,\nabla\,\zeta$
for the ``normal time derivative''. 
It holds that
\begin{equation} \label{eq:DElem5.2}
\ddt \left\langle \chi, \zeta \right\rangle_{\Gamma(t)}
 = \left\langle \matpartx\,\chi, \zeta \right\rangle_{\Gamma(t)}
 + \left\langle \chi, \matpartx\,\zeta \right\rangle_{\Gamma(t)}
+ \left\langle \chi\,\zeta, \nabs\,.\,\vec{\mathcal{V}} \right\rangle_{\Gamma(t)}
\qquad \forall\ \chi,\zeta \in \psispace\,,
\end{equation}
see \citet[Lem.\ 5.2]{DziukE13}, and that
\begin{equation} \label{eq:DEdef2.11}
\left\langle \zeta, \nabs\,.\, \vec\eta \right\rangle_{\Gamma(t)}
+ \left\langle \nabs\,\zeta , \vec\eta \right\rangle_{\Gamma(t)}
= - \left\langle \zeta\,\vec\eta , \vec\varkappa \right\rangle_{\Gamma(t)} 
\qquad \forall\ \zeta \in H^1(\Gamma(t)),\, \vec \eta \in [H^1(\Gamma(t))]^d\,,
\end{equation}
see \citet[Def.\ 2.11]{DziukE13}.
For later use
we remark that it follows from (\ref{eq:DEdef2.11}) that
\begin{equation} \label{eq:newGD}
\left\langle \gamma(\psi)\,\vec\varkappa + \nabs\,\gamma(\psi), 
\vec \xi \right\rangle_{\Gamma(t)} 
=
\left\langle \gamma(\psi)\,\varkappa\,\vec \nu + \nabs\,\gamma(\psi), 
\vec \xi \right\rangle_{\Gamma(t)}
=
- \left\langle \gamma(\psi), \nabs\,.\,\vec \xi \right\rangle_{\Gamma(t)}
\quad \forall\ \vec\xi \in \uspace\,.
\end{equation}

The natural weak formulation of
the system (\ref{eq:NSa}--d), (\ref{eq:sigma}), (\ref{eq:1a}--c),
(\ref{eq:1surf}) is then given as follows.
Find $\Gamma(t) = \vec x(\Upsilon, t)$ for $t\in[0,T]$ 
with $\vec{\mathcal{V}} \in L^2(0,T;[H^1(\Gamma(t))]^d)$, 
and functions $\vec u \in \utimespace$, 
$p \in L^2(0,T; \widehat\pspace)$, 
$\vec\varkappa \in L^2(0,T; [L^2(\Gamma(t))]^d)$ and
$\psi \in \psispace$ such that 
for almost all $t \in (0,T)$ it holds that
\begin{subequations}
\begin{align}
& \tfrac12\left[ \ddt (\rho\,\vec u, \vec \xi) + (\rho\,\vec u_t, \vec \xi)
- (\rho\,\vec u, \vec \xi_t)
+ (\rho, [(\vec u\,.\,\nabla)\,\vec u]\,.\,\vec \xi
- [(\vec u\,.\,\nabla)\,\vec \xi]\,.\,\vec u) \right] 
\nonumber \\ & \qquad
+ 2\,(\mu\,\mat D(\vec u), \mat D(\vec \xi)) 
- (p, \nabla\,.\,\vec \xi)
- \left\langle \gamma(\psi)\,\vec\varkappa + \nabs\,\gamma(\psi), 
  \vec \xi \right\rangle_{\Gamma(t)}
 = (\vec f, \vec \xi) 
\qquad \forall\ \vec \xi \in \utimespace\,,
\label{eq:weakGDa} \\
& (\nabla\,.\,\vec u, \varphi) = 0  \qquad \forall\ \varphi \in 
\widehat\pspace\,, \label{eq:weakGDb} \\
& \left\langle \vec{\mathcal{V}} - \vec u, \vec\chi \right\rangle_{\Gamma(t)} 
 = 0  \qquad\forall\ \vec\chi \in [L^2(\Gamma(t))]^d\,,
\label{eq:weakGDc} \\
& \left\langle \vec\varkappa, \vec\eta \right\rangle_{\Gamma(t)}
+ \left\langle \nabs\,\vec \id, \nabs\,\vec \eta \right\rangle_{\Gamma(t)}
 = 0  \qquad\forall\ \vec\eta \in [H^1(\Gamma(t))]^d\,, \label{eq:weakGDd} \\
& \ddt\left\langle \psi, \zeta \right\rangle_{\Gamma(t)}
+ \Ds \left\langle \nabs\,\psi, \nabs\,\zeta \right\rangle_{\Gamma(t)}
= \left\langle \psi, \matpartx\, \zeta \right\rangle_{\Gamma(t)}
\quad\forall\ \zeta \in \psispace\,,
\label{eq:weakGDe}
\end{align}
\end{subequations}
as well as the initial conditions (\ref{eq:1d}), where in (\ref{eq:weakGDc}) 
we have recalled (\ref{eq:V}). 
Here (\ref{eq:weakGDa}--d) can be derived analogously to the weak formulation
presented in \cite{fluidfbp}, recall (\ref{eq:rhot3}) and (\ref{eq:sigmaibp}), 
while (\ref{eq:weakGDe}) is a direct consequence
of (\ref{eq:DElem5.2}) 
and (\ref{eq:DEdef2.11}); see \cite{DziukE13}. Of course, it follows from
(\ref{eq:weakGDc}) and (\ref{eq:matpartux}) that $\matpartx$ in
(\ref{eq:weakGDe}) can be replaced by $\matpartu$.

\begin{rem} \label{rem:Stokes}
For ease of presentation, in this paper we restrict ourselves to the case of
two-phase Navier--Stokes flow, i.e.\ $\rho_\pm > 0$. However, it is a simple
matter to generalize the results in this paper to two-phase Stokes flow in the
bulk, i.e.\ to $\rho_+ = \rho_- = 0$. For example, the weak formulation
{\rm (\ref{eq:weakGDa}--e)} then holds with
$\rho=0$ and with $\utimespace$ replaced by $L^2(0,T;\uspace)$; and analogous
simplifications can be applied to the finite element approximations that will
be introduced later in this paper, see also \cite{spurious}.
For example, the presented fully discrete schemes in \S\ref{sec:32} are 
valid for arbitrary choices of $\rho_\pm \geq0$.
\end{rem}

\subsection{Energy bounds}\label{sec:13}

In what follows
we would like to derive an energy bound for a solution of
(\ref{eq:weakGDa}--e). All of the following considerations are formal, in the
sense that we make the appropriate assumptions about the existence, 
boundedness and regularity of a solution to \mbox{(\ref{eq:weakGDa}--e)}. 
In particular, we assume that $\psi \in [0,\psi_\infty)$.
Choosing $\vec\xi = \vec u$ in (\ref{eq:weakGDa}) and
$\varphi = p(\cdot, t)$ in (\ref{eq:weakGDb}) yields that
\begin{equation} \label{eq:d1}
\tfrac12\,\ddt \,\|\rho^\frac12\,\vec u \|_0^2 +
2\,\| \mu^\frac12\,\mat D(\vec u)\|_0^2 = (\vec f, \vec u)
+ \left\langle \gamma(\psi) \,\vec\varkappa + \nabs\,\gamma (\psi) , \vec u
\right\rangle_{\Gamma(t)}\,.
\end{equation}
In what follows, assuming that $\gamma$ is not constant, 
recall (\ref{eq:Fconst}), we would like to choose $\zeta = F'(\psi)$ in 
(\ref{eq:weakGDe}). As $F'$ in general is singular at the origin, 
recall (\ref{eq:Fdd}), we instead
choose $\zeta = F'(\psi + \alpha)$ for some $\alpha \in \R_{>0}$ with
$\psi + \alpha < \psi_\infty$.
Then we obtain, on recalling (\ref{eq:F}) and (\ref{eq:DElem5.2}), that
\begin{align} \label{eq:d4}
& \ddt \left\langle F(\psi + \alpha) - \gamma(\psi + \alpha), 1 
\right\rangle_{\Gamma(t)}
+ \Ds \left\langle \nabs\,(\psi + \alpha), 
\nabs\,F'(\psi + \alpha) \right\rangle_{\Gamma(t)} \nonumber \\ &
\qquad\qquad
= \left\langle \psi + \alpha, \matpartx\,F'(\psi + \alpha) \right\rangle_{\Gamma(t)}
+ \alpha  \left\langle F'(\psi + \alpha) , \nabs\,.\,\vec{\mathcal{V}} \right\rangle_{\Gamma(t)}
\,.
\end{align}
Moreover, choosing $\chi = \gamma(\psi+\alpha)$, $\zeta = 1$ in
(\ref{eq:DElem5.2}), 
and then choosing $\vec\eta = \vec{\mathcal{V}}$, $\zeta = \gamma(\psi+\alpha)$ in
(\ref{eq:DEdef2.11}) leads to
\begin{align} \label{eq:d2}
\ddt \left\langle \gamma(\psi+\alpha), 1 \right\rangle_{\Gamma(t)}
& = \left\langle \matpartx\,\gamma(\psi+\alpha), 1 \right\rangle_{\Gamma(t)}
+ \left\langle \gamma(\psi+\alpha), \nabs\,.\,\vec{\mathcal{V}} \right\rangle_{\Gamma(t)} 
\nonumber \\ &
= \left\langle \matpartx\,\gamma(\psi+\alpha), 1 \right\rangle_{\Gamma(t)}
- \left\langle \gamma(\psi+\alpha) \,\vec\varkappa + 
 \nabs\,\gamma (\psi+\alpha) , \vec{\mathcal{V}} \right\rangle_{\Gamma(t)}\,.
\end{align}
In addition, it follows from (\ref{eq:Fdd}) that
\begin{equation} \label{eq:d3}
\matpartx\,\gamma(\psi+\alpha) = \gamma'(\psi+\alpha) \, \matpartx\,\psi
= - (\psi+\alpha)\,F''(\psi+\alpha)\,\matpartx\,\psi 
= - (\psi+\alpha)\,\matpartx\,F'(\psi+\alpha)\,.
\end{equation}
Combining (\ref{eq:d4}), (\ref{eq:d2}) and (\ref{eq:d3}) yields that
\begin{align} \label{eq:d123}
& \ddt \left\langle F(\psi + \alpha) , 1 \right\rangle_{\Gamma(t)}
+ \Ds \left\langle \nabs\,\mathcal{F}(\psi + \alpha), 
\nabs\,\mathcal{F}(\psi + \alpha) \right\rangle_{\Gamma(t)} \nonumber \\ &
\qquad\qquad
= - \left\langle \gamma(\psi+\alpha) \,\vec\varkappa + 
 \nabs\,\gamma (\psi+\alpha) , \vec{\mathcal{V}} \right\rangle_{\Gamma(t)}
+ \alpha  \left\langle F'(\psi + \alpha) , \nabs\,.\,\vec{\mathcal{V}} \right\rangle_{\Gamma(t)}
\,,
\end{align}
where, on recalling (\ref{eq:Fdd}) and (\ref{eq:gammaprime}),
\begin{equation*}  
\mathcal{F}(r) = \int_0^r [F''(y)]^{\frac12} \;{\rm d}y\,.
\end{equation*}
Letting $\alpha \to 0$ in (\ref{eq:d123}) yields, on recalling (\ref{eq:F0}),
that
\begin{equation}  \label{eq:d45}
 \ddt \left\langle F(\psi) , 1 \right\rangle_{\Gamma(t)}
+ \Ds \left\langle \nabs\,\mathcal{F}(\psi), 
\nabs\,\mathcal{F}(\psi) \right\rangle_{\Gamma(t)} 
= - \left\langle \gamma(\psi) \,\vec\varkappa + 
 \nabs\,\gamma (\psi) , \vec{\mathcal{V}} \right\rangle_{\Gamma(t)} \,.
\end{equation}
We note that (\ref{eq:d45}) is still valid in the case (\ref{eq:Fconst}), on
noting (\ref{eq:DElem5.2}) and (\ref{eq:newGD}). 
Combining (\ref{eq:d45}) with (\ref{eq:d1}) implies the a priori energy 
equation
\begin{equation} \label{eq:d5}
\ddt \left( \tfrac12\,\|\rho^\frac12\,\vec u \|_0^2 
+ \left\langle F(\psi), 1 \right\rangle_{\Gamma(t)} \right)
+ 2\,\| \mu^\frac12\,\mat D(\vec u)\|_0^2 
+ \Ds \left\langle \nabs\,\mathcal{F}(\psi), \nabs\,\mathcal{F}(\psi) 
\right\rangle_{\Gamma(t)}
= (\vec f, \vec u)\,.
\end{equation}
Moreover, the volume
of $\Omega_-(t)$ is preserved in time, i.e.\ the mass of each phase is 
conserved. To see this, choose $\vec\chi = \vec\nu$ in
(\ref{eq:weakGDc}) and $\varphi = \charfcn{\Omega_-(t)}$ in (\ref{eq:weakGDb})
to obtain
\begin{equation}
\frac{\rm d}{{\rm d}t} \vol(\Omega_-(t)) = 
\left\langle \vec{\mathcal{V}}, \vec\nu \right\rangle_{\Gamma(t)}
= \left\langle \vec u, \vec\nu \right\rangle_{\Gamma(t)}
= \int_{\Omega_-(t)} \nabla\,.\,\vec u \dL{d} 
=0\,. \label{eq:conserved}
\end{equation}
In addition, we note that it immediately follows from 
choosing $\zeta = 1$ in (\ref{eq:weakGDe}) that the 
total amount of surfactant is preserved, i.e.\
\begin{equation} \label{eq:totalpsi}
\ddt \int_{\Gamma(t)} \psi \dH{d-1} = 0 \,.
\end{equation}

\subsection{Alternative weak formulation}\label{sec:14}

It will turn out that another weak formulation of the overall system
(\ref{eq:NSa}--d), (\ref{eq:sigma}), \mbox{(\ref{eq:1a}--c)}, (\ref{eq:1surf}) 
will lead to finite
element approximations with better mesh properties. In order to derive
the weak formulation, and on recalling (\ref{eq:matpartux}),
we note that if we relax $\vec{\mathcal{V}} = \vec u \!\mid_{\Gamma(t)}$ to
\begin{equation*} 
\vec {\mathcal{V}}\,.\,\vec \nu = \vec u\,.\,\vec \nu
\quad \text{on } \Gamma(t)\,,
\end{equation*}
then it holds that
\begin{equation} \label{eq:matpartxu}
\matpartx\, \zeta = \matpartu\,\zeta 
+ (\vec {\mathcal{V}} - \vec u)\,.\,\nabs\,\zeta \qquad \forall\ \zeta \in 
\psispace\,.
\end{equation}
Our preferred finite element approximation will then be based on the 
following weak formulation.
Find $\Gamma(t) = \vec x(\Upsilon, t)$ for $t\in[0,T]$ 
with $\vec{\mathcal{V}} \in L^2(0,T;[H^1(\Gamma(t))]^d)$, 
and functions $\vec u \in \utimespace$, 
$p \in L^2(0,T; \widehat\pspace)$, 
$\varkappa \in L^2(0,T; L^2(\Gamma(t)))$ and
$\psi \in \psispace$ such that 
for almost all $t \in (0,T)$ it holds that
\begin{subequations}
\begin{align}
& \tfrac12\left[ \ddt (\rho\,\vec u, \vec \xi) + (\rho\,\vec u_t, \vec \xi)
- (\rho\,\vec u, \vec \xi_t)
+ (\rho, [(\vec u\,.\,\nabla)\,\vec u]\,.\,\vec \xi
- [(\vec u\,.\,\nabla)\,\vec \xi]\,.\,\vec u) \right] 
\nonumber \\ & \qquad
+ 2\,(\mu\,\mat D(\vec u), \mat D(\vec \xi)) 
- (p, \nabla\,.\,\vec \xi)
- \left\langle \gamma(\psi)\,\varkappa\,\vec \nu + \nabs\,\gamma(\psi), 
  \vec \xi \right\rangle_{\Gamma(t)}
 = (\vec f, \vec \xi) 
\qquad \forall\ \vec \xi \in \utimespace\,,
\label{eq:weaka} \\
& (\nabla\,.\,\vec u, \varphi) = 0  \qquad \forall\ \varphi \in 
\widehat\pspace\,, \label{eq:weakb} \\
&  \left\langle \vec {\mathcal{V}} - \vec u, \chi\,\vec\nu \right\rangle_{\Gamma(t)} = 0
 \qquad\forall\ \chi \in L^2(\Gamma(t))\,,
\label{eq:weakc} \\
& \left\langle \varkappa\,\vec\nu, \vec\eta \right\rangle_{\Gamma(t)}
+ \left\langle \nabs\,\vec \id, \nabs\,\vec \eta \right\rangle_{\Gamma(t)}
 = 0  \qquad\forall\ \vec\eta \in [H^1(\Gamma(t))]^d\,, \label{eq:weakd} \\
& \ddt\left\langle \psi, \zeta \right\rangle_{\Gamma(t)}
+ \Ds \left\langle \nabs\,\psi, \nabs\,\zeta \right\rangle_{\Gamma(t)}
+ \left\langle \psi\,(\vec {\mathcal{V}} - \vec u), \nabs\,\zeta\right\rangle_{\Gamma(t)}
= \left\langle \psi, \matpartx\, \zeta \right\rangle_{\Gamma(t)}
\qquad\forall\ \zeta \in \psispace\,,
\label{eq:weake}
\end{align}
\end{subequations}
as well as the initial conditions (\ref{eq:1d}), where in (\ref{eq:weakc},e) 
we have recalled (\ref{eq:V}). 
The derivation of \mbox{(\ref{eq:weaka}--d)} is analogous to the derivation of
(\ref{eq:weakGDa}--d), while for the formulation (\ref{eq:weake}) we note
(\ref{eq:DElem5.2}) and, on recalling (\ref{eq:DEdef2.11}) 
and (\ref{eq:matpartxu}), the identity
\begin{align*}
 \left\langle \matpartx\, \psi + \psi\,\nabs\,.\,\vec {\mathcal{V}}, \zeta 
\right\rangle_{\Gamma(t)} &
=
 \left\langle \matpartu\, \psi + \psi\,\nabs\,.\,\vec u, \zeta 
\right\rangle_{\Gamma(t)}
 +  \left\langle (\vec{\mathcal{V}} - \vec u) \,.\,\nabs\, \psi
 + \psi\,\nabs\,.\,(\vec{\mathcal{V}} - \vec u), \zeta \right\rangle_{\Gamma(t)}
\nonumber \\ &
=
\left\langle \matpartu\, \psi + \psi\,\nabs\,.\,\vec u, \zeta 
\right\rangle_{\Gamma(t)}
 - \left\langle \psi\,(\vec{\mathcal{V}} - \vec u), \nabs\,\zeta 
 \right\rangle_{\Gamma(t)}\,,
\end{align*}
where we have used the fact that $\langle \vec{\mathcal{V}} - \vec u,
\psi\,\zeta\,\vec\varkappa \rangle_{\Gamma(t)} = 0$ due to (\ref{eq:weakc}). 
In fact, a simpler way of seeing that (\ref{eq:weake}) is consistent with
(\ref{eq:weakGDe}) is to recall that the latter holds with 
$\matpartx$ replaced by $\matpartu$, and so the desired result follows
immediately from (\ref{eq:matpartxu}).

The main differences between (\ref{eq:weakGDa}--e) and (\ref{eq:weaka}--e) are
that for the latter the scalar curvature $\varkappa$ is sought as part of the 
solution, rather than $\vec\varkappa$, that in the latter only the normal part 
of $\vec u$ affects the evolution of the parameterization $\vec x$, and that
as a consequence the weak formulation of the advection-diffusion has to account
for the additional freedom in the tangential velocity of the interface
parameterization.

Similarly to (\ref{eq:d1})--(\ref{eq:d5}), we can formally show that a solution
to (\ref{eq:weaka}--e) satisfies the a priori energy bound (\ref{eq:d5}). First
of all we note that since $\vec\varkappa=\varkappa\,\vec\nu$, a solution to
(\ref{eq:weaka}--e) satisfies (\ref{eq:d1}). Secondly we observe that the
analogue of (\ref{eq:d45}) has as right hand side
\begin{align} \label{eq:dbgn}
& - \left\langle \gamma(\psi)\,\vec\varkappa + \nabs\,\gamma(\psi), 
  \vec{\mathcal{V}} \right\rangle_{\Gamma(t)}
- \left\langle \psi\,(\vec {\mathcal{V}} - \vec u), \nabs\,F'(\psi)
\right\rangle_{\Gamma(t)} \nonumber \\ & \qquad
= - \left\langle \gamma(\psi)\,\varkappa\,\vec\nu+ \nabs\,\gamma(\psi), 
  \vec{\mathcal{V}} \right\rangle_{\Gamma(t)}
+ \left\langle \nabs\,\gamma(\psi), \vec {\mathcal{V}} - \vec u
\right\rangle_{\Gamma(t)} \nonumber \\ & \qquad
= -\left\langle \gamma(\psi)\,\varkappa\,\vec\nu+ \nabs\,\gamma(\psi), 
  \vec u \right\rangle_{\Gamma(t)}\,,
\end{align}
where we have used (\ref{eq:Fdd}) and (\ref{eq:weakc}) with 
$\chi = \gamma(\psi)\,\varkappa$.
Of course, (\ref{eq:dbgn}) now cancels with the last term in
(\ref{eq:d1}), and so we obtain (\ref{eq:d5}). Moreover, the properties
(\ref{eq:conserved}) and (\ref{eq:totalpsi}) also hold.

\setcounter{equation}{0}
\section{Finite element approximation} \label{sec:3}

\subsection{Semi-discrete approximation} \label{sec:31}
For simplicity we consider $\Omega$ to be a polyhedral domain. Then 
let  ${\cal T}^h$ 
be a regular partitioning of $\Omega$ into disjoint open simplices
$\sigmaO^h_j$, $j = 1 ,\ldots, J_\Omega^h$.
Associated with ${\cal T}^h$ are the finite element spaces
\begin{equation*} 
 S^h_k := \{\chi \in C(\overline{\Omega}) : \chi\!\mid_{\sigmaO} \in
\mathcal{P}_k(\sigmaO) \quad \forall\ \sigmaO \in {\cal T}^h\} 
\subset H^1(\Omega)\,, \qquad k \in \mathbb{N}\,,
\end{equation*}
where $\mathcal{P}_k(\sigmaO)$ denotes the space of polynomials of degree $k$
on $\sigmaO$. We also introduce $S^h_0$, the space of  
piecewise constant functions on ${\cal T}^h$.
Let $\{\varphi_{k,j}^h\}_{j=1}^{K_k^h}$ be the standard basis functions 
for $S^h_k$, $k\geq 0$.
We introduce $\vec I^h_k:[C(\overline{\Omega})]^d\to [S^h_k]^d$, $k\geq 1$, 
the standard interpolation
operators, such that $(\vec I^h_k\,\vec\eta)(\vec{p}_{k,j}^h) = 
\vec\eta(\vec{p}_{k,j}^h)$ for $j=1,\ldots, K_k^h$; where
$\{\vec p_{k,j}^h\}_{j=1}^{K_k^h}$ denotes the coordinates of the degrees of
freedom of $S^h_k$, $k\geq 1$. In addition we define the standard projection
operator $I^h_0:L^1(\Omega)\to S^h_0$, such that
\begin{equation*}
(I^h_0 \eta)\!\mid_{o} = \frac1{\mathcal{L}^d(o)}\,\int_{o}
\eta \dL{d} \qquad \forall\ o \in \mathcal{T}^h\,.
\end{equation*}
Our approximation to the velocity and pressure on ${\cal T}^h$
will be finite element spaces
$\uspace^h\subset\uspace$ and $\pspace^h(t)\subset\pspace$.
We require also the space $\widehat\pspace^h(t):= 
\pspace^h(t) \cap \widehat\pspace$. 
Based on the authors' earlier work in \cite{spurious,fluidfbp}, we will select 
velocity/pressure finite element spaces 
that satisfy the LBB inf-sup condition,
see e.g.\ \citet[p.~114]{GiraultR86}, and augment the pressure space by a 
single additional basis function, namely by the characteristic function of the
inner phase. 
For the obtained spaces $(\uspace^h,\pspace^h(t))$ we are unable to prove that
they satisfy an LBB condition.
The extension of the given pressure finite element space, which is an example
of an XFEM approach, leads to exact volume conservation of the two
phases within the finite element framework.
For the non-augmented spaces we may choose, for example, 
the lowest order Taylor--Hood element 
P2--P1, the P2--P0 element or the P2--(P1+P0) element on setting 
$\uspace^h=[S^h_2]^d\cap\uspace$, 
and $\pspace^h = S^h_1,\,S^h_0$ or $S^h_1+S^h_0$, respectively.
We refer to \cite{spurious,fluidfbp} for more details.

The parametric finite element spaces in order to approximate $\vec{x}$,
as well as $\vec\varkappa$ and $\varkappa$ in (\ref{eq:weakGDa}--e) and
(\ref{eq:weaka}--e), respectively, are defined as follows.
Similarly to \cite{gflows3d}, let
$\Gamma^h(t)\subset\R^d$ be a $(d-1)$-dimensional {\em polyhedral surface}, 
i.e.\ a union of non-degenerate $(d-1)$-simplices with no
hanging vertices (see \citet[p.~164]{DeckelnickDE05} for $d=3$),
approximating the closed surface $\Gamma(t)$. In
particular, let $\Gamma^h(t)=\bigcup_{j=1}^{J_\Gamma}
\overline{\sigma^h_j(t)}$, where $\{\sigma^h_j(t)\}_{j=1}^{J_\Gamma}$ is a
family of mutually disjoint open $(d-1)$-simplices with vertices
$\{\vec{q}^h_k(t)\}_{k=1}^{K_\Gamma}$.
Then let
\begin{align*} 
\Vht & := \{\vec\chi \in [C(\Gamma^h(t))]^d:\vec\chi\!\mid_{\sigma^h_j}
\mbox{ is linear}\ \forall\ j=1,\ldots, J_\Gamma\} \\ & 
=: [\Wht]^d \subset [H^1(\Gamma^h(t))]^d\,,
\end{align*}
where $\Wht \subset H^1(\Gamma^h(t))$ is the space of scalar continuous
piecewise linear functions on $\Gamma^h(t)$, with 
$\{\chi^h_k(\cdot,t)\}_{k=1}^{K_\Gamma}$ 
denoting the standard basis of $\Wht$, i.e.\
\begin{equation} \label{eq:bf}
\chi^h_k(\vec q^h_l(t),t) = \delta_{kl}\qquad
\forall\ k,l \in \{1,\ldots,K_\Gamma\}\,, t \in [0,T]\,.
\end{equation}
For later purposes, we also introduce 
$\pi^h(t): C(\Gamma^h(t))\to \Wht$, the standard interpolation operator
at the nodes $\{\vec{q}_k^h(t)\}_{k=1}^{K_\Gamma}$, and similarly
$\vec\pi^h(t): [C(\Gamma^h(t))]^d\to \Vht$.

For scalar and vector 
functions $\eta,\zeta$ on $\Gamma^h(t)$ 
we introduce the $L^2$--inner 
product $\langle\cdot,\cdot\rangle_{\Gamma^h(t)}$ over
the polyhedral surface $\Gamma^h(t)$ 
as follows
\begin{equation*} 
\left\langle \eta, \zeta \right\rangle_{\Gamma^h(t)} := 
\int_{\Gamma^h(t)} \eta\,.\,\zeta \dH{d-1}\,.
\end{equation*}
If $\eta,\zeta$ are piecewise continuous, with possible jumps
across the edges of $\{\sigma_j^h\}_{j=1}^{J_\Gamma}$,
we introduce the mass lumped inner product
$\langle\cdot,\cdot\rangle_{\Gamma^h(t)}^h$ as
\begin{equation} \label{eq:defNI}
\left\langle \eta, \zeta \right\rangle^h_{\Gamma^h(t)}  :=
\tfrac1d \sum_{j=1}^{J_\Gamma} \mathcal{H}^{d-1}(\sigma^h_j)\,\sum_{k=1}^{d} 
(\eta\,.\,\zeta)((\vec{q}^h_{j_k})^-),
\end{equation}
where $\{\vec{q}^h_{j_k}\}_{k=1}^{d}$ are the vertices of $\sigma^h_j$,
and where
we define $\eta((\vec{q}^h_{j_k})^-):=
\underset{\sigma^h_j\ni \vec{p}\to \vec{q}^h_{j_k}}{\lim}\, \eta(\vec{p})$.

Following \citet[(5.23)]{DziukE13}, we define the discrete material velocity
for $\vec z \in \Gamma^h(t)$ by
\begin{equation} \label{eq:Xht}
\vec{\mathcal{V}}^h(\vec z, t) := \sum_{k=1}^{K_\Gamma}
\left[\ddt\,\vec q^h_k(t)\right] \chi^h_k(\vec z, t) \,.
\end{equation}
Then, similarly to (\ref{eq:matpartx}), we define
\begin{equation} \label{eq:matpartxh}
\matpartxh\, \zeta = \zeta_t + \vec{\mathcal{V}}^h\,.\,\nabla\,\zeta
\qquad\forall\ \zeta \in H^1(\GhT)\,,
\end{equation}
where, similarly to (\ref{eq:GT}), we have defined the discrete
space-time surface
\begin{equation*} 
\GhT := \bigcup_{t \in [0,T]} \Gamma^h(t) \times \{t\}\,.
\end{equation*}
For later use, we also introduce the finite element space
\begin{equation*}
W(\GhT) := \{ \chi \in C(\GhT) : \matpartxh\,\chi \in C(\GhT) \text{ and } 
\chi(\cdot, t) \in \Wht \quad \forall\ t \in [0,T] \}\,.
\end{equation*}

On differentiating (\ref{eq:bf}) with respect to $t$, it immediately follows
that
\begin{equation} \label{eq:mpbf}
\matpartxh\, \chi^h_k = 0
\quad\forall\ k \in \{1,\ldots,K_\Gamma\}\,,
\end{equation}
see \citet[Lem.\ 5.5]{DziukE13}. 
It follows directly from (\ref{eq:mpbf}) that
\begin{equation} \label{eq:p1}
\matpartxh\,\zeta(\cdot,t) = \sum_{k=1}^{K_\Gamma} \chi^h_k(\cdot,t)\,
\ddt\,\zeta_k(t) \quad \text{on}\ \Gamma^h(t)
\end{equation}
for $\zeta(\cdot,t) = \sum_{k=1}^{K_\Gamma} \zeta_k(t)\,\chi^h_k(\cdot,t)
\in \Wht$, and hence
$\matpartxh\,\vec \id = \vec{\mathcal{V}}^h$ on $\Gamma^h(t)$.
Moreover, it holds that
\begin{equation} \label{eq:DEeq5.28}
\ddt\, \int_{\sigma^h_j(t)} \zeta \dH{d-1} 
= \int_{\sigma^h_j(t)} \matpartxh\,\zeta + \zeta\,\nabs\,.\,\vec{\mathcal{V}}^h 
\dH{d-1} \quad \forall\ \zeta \in H^1(\sigma^h_j(t))\,, j \in
\{1,\ldots,J_\Gamma\}\,,
\end{equation}
see \citet[Lem.\ 5.6]{DziukE13}. 
It immediately follows from (\ref{eq:DEeq5.28}) that
\begin{equation} \label{eq:DElem5.6}
\ddt \left\langle \eta, \zeta \right\rangle_{\Gamma^h(t)}
 = \left\langle \matpartxh\,\eta, \zeta \right\rangle_{\Gamma^h(t)}
 + \left\langle \eta, \matpartxh\,\zeta \right\rangle_{\Gamma^h(t)}
+ \left\langle \eta\,\zeta, \nabs\,.\,\vec{\mathcal{V}}^h \right\rangle_{\Gamma^h(t)}
\qquad \forall\ \eta,\zeta \in W(\GhT)\,, 
\end{equation}
which is a discrete analogue of (\ref{eq:DElem5.2}). 
It is not difficult to show that the
analogue of (\ref{eq:DElem5.6}) with numerical integration also holds. 
We establish
this result in the next lemma, together with a discrete variant of
(\ref{eq:DEdef2.11}), on recalling (\ref{eq:LBop}), for the case $d=2$. 

\begin{lem} \label{lem:DElem5.6NI}
It holds that
\begin{equation} \label{eq:DElem5.6NI}
\ddt \left\langle \eta, \zeta \right\rangle_{\Gamma^h(t)}^h
 = \left\langle \matpartxh\,\eta, \zeta \right\rangle_{\Gamma^h(t)}^h
 + \left\langle \eta, \matpartxh\,\zeta \right\rangle_{\Gamma^h(t)}^h
+ \left\langle \eta\,\zeta, \nabs\,.\,\vec{\mathcal{V}}^h \right\rangle_{\Gamma^h(t)}^h
\quad \forall\ \eta,\zeta \in W(\GhT)\,. 
\end{equation}
In addition, if $d=2$, it holds that
\begin{equation} \label{eq:JWB}
\left\langle \zeta, \nabs\,.\,\vec\eta \right\rangle_{\Gamma^h(t)}
+ \left\langle \nabs\,\zeta, \vec\eta \right\rangle_{\Gamma^h(t)} = 
\left\langle \nabs\,\vec \id, \nabs\,\vec\pi^h\,(\zeta\,\vec\eta) 
\right\rangle_{\Gamma^h(t)}
\quad \forall\ \zeta \in \Wht\,, \vec\eta \in \Vht\,.
\end{equation}
\end{lem}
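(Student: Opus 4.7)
The plan is to establish both identities by a direct elementwise computation, exploiting the piecewise affine structure of $\vec{\mathcal{V}}^h$, $\vec\id$ and of elements of $\Vht$, $\Wht$; no subtle estimate is required.

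For (\ref{eq:DElem5.6NI}), I would rewrite the mass-lumped inner product using (\ref{eq:defNI}) as
\begin{equation*}
\left\langle \eta, \zeta \right\rangle^h_{\Gamma^h(t)} = \tfrac{1}{d}\sum_{j=1}^{J_\Gamma} \mathcal{H}^{d-1}(\sigma^h_j(t))\sum_{k=1}^{d}(\eta\,\zeta)(\vec q^h_{j_k}(t),t)
\end{equation*}
and then differentiate in $t$ term by term. For the derivative of the nodal values, I would write $\eta = \sum_l \eta_l(t)\,\chi^h_l(\cdot,t)$ so that $\eta(\vec q^h_{j_k}(t),t) = \eta_{j_k}(t)$, and invoke (\ref{eq:p1}) to identify $\ddt\,\eta_{j_k}(t) = (\matpartxh\eta)(\vec q^h_{j_k}(t),t)$; applying the product rule to $(\eta\,\zeta)(\vec q^h_{j_k}(t),t)$ then yields the first two terms on the right of (\ref{eq:DElem5.6NI}). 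For the derivative of $\mathcal{H}^{d-1}(\sigma^h_j(t))$ I would apply (\ref{eq:DEeq5.28}) with $\zeta\equiv 1$ and then use the key observation that, by (\ref{eq:Xht}), $\vec{\mathcal{V}}^h|_{\sigma^h_j}$ is affine, so $\nabs\,.\,\vec{\mathcal{V}}^h$ is constant on each $\sigma^h_j$. This constancy ensures that mass-lumping reproduces the factor $\nabs\,.\,\vec{\mathcal{V}}^h$ exactly on $\sigma^h_j$, and the resulting sum is precisely $\langle \eta\,\zeta, \nabs\,.\,\vec{\mathcal{V}}^h\rangle^h_{\Gamma^h(t)}$.

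For (\ref{eq:JWB}) with $d=2$, I would argue segment-by-segment. Each $\sigma^h_j$ is a line segment with endpoints $\vec q^h_{j_1}, \vec q^h_{j_2}$ and unit tangent $\vec t_j$, the surface gradient along the segment is $\nabs = \vec t_j\,\partial_s$, and hence the fundamental theorem of calculus gives
\begin{equation*}
\int_{\sigma^h_j}\bigl[\zeta\,\nabs\,.\,\vec\eta + \nabs\,\zeta\,.\,\vec\eta\bigr]\dH{1} = \int_{\sigma^h_j} \vec t_j\,.\,\partial_s(\zeta\,\vec\eta)\dH{1} = \vec t_j\,.\,\bigl[(\zeta\,\vec\eta)(\vec q^h_{j_2}) - (\zeta\,\vec\eta)(\vec q^h_{j_1})\bigr].
\end{equation*}
On the other side of (\ref{eq:JWB}), $\nabs\,\vec\id|_{\sigma^h_j} = \vec t_j\otimes\vec t_j$, and $\vec\pi^h(\zeta\,\vec\eta)|_{\sigma^h_j}$ is the affine interpolant sharing the endpoint values of $\zeta\,\vec\eta$; therefore $\nabs\,\vec\id : \nabs\,\vec\pi^h(\zeta\,\vec\eta)|_{\sigma^h_j} = \vec t_j\,.\,\partial_s\vec\pi^h(\zeta\,\vec\eta)$ is constant along the segment and integrates to exactly the same boundary difference. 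Summing over $j$ yields (\ref{eq:JWB}).

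The main conceptual obstacle lies in (\ref{eq:JWB}) and explains the restriction to $d=2$: in higher dimensions $\vec\pi^h(\zeta\,\vec\eta)$ fails to reproduce the quadratic interior part of the product $\zeta\,\vec\eta$ on a $(d-1)$-simplex, whereas for $d=2$ each simplex has only two vertices and the fundamental theorem collapses both sides to boundary differences, rendering the quadratic interior error invisible. The analogous mass-lumping subtlety in (\ref{eq:DElem5.6NI}) is resolved automatically by the piecewise constancy of $\nabs\,.\,\vec{\mathcal{V}}^h$.
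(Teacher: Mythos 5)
Your proof is correct and follows essentially the same route as the paper: the first identity is obtained by differentiating the mass-lumped sum (\ref{eq:defNI}) directly, using (\ref{eq:p1}) for the nodal values and (\ref{eq:DEeq5.28}) with $\zeta=1$ for the element measures, and the second identity is the paper's observation that $\left\langle \nabs\,.\,(\zeta\,\vec\eta),1\right\rangle_{\Gamma^h(t)} = \left\langle \vec\id_s,(\zeta\,\vec\eta)_s\right\rangle_{\Gamma^h(t)} = \left\langle \vec\id_s,(\vec\pi^h[\zeta\,\vec\eta])_s\right\rangle_{\Gamma^h(t)}$, which you justify segment-by-segment via the fundamental theorem of calculus. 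Your closing remark on why the interpolation step is invisible only for $d=2$ is a correct articulation of the restriction.
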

\begin{proof}
Choosing $\zeta = 1$ in (\ref{eq:DEeq5.28}) yields that
\begin{equation} \label{eq:p3}
\ddt\, \mathcal{H}^{d-1}(\sigma^h_j(t))
= \mathcal{H}^{d-1}(\sigma^h_j(t))
\,\nabs\,.\,\vec{\mathcal{V}}^h(\cdot,t)\quad \text{on}\ \sigma^h_j(t)\,.
\end{equation}
Differentiating (\ref{eq:defNI}) with respect to $t$, and combining with
(\ref{eq:p3}) and (\ref{eq:p1}), yields the desired result 
(\ref{eq:DElem5.6NI}). 

For arbitrary $\zeta \in H^1(\Gamma^h(t))$ and 
$\vec\eta \in [H^1(\Gamma^h(t))]^2$ we have for $d=2$ that
\begin{align*}
\left\langle \nabs\,.\,(\zeta\,\vec\eta), 1 \right\rangle_{\Gamma^h(t)}
& = \left\langle \vec \id_s, (\zeta\,\vec\eta)_s \right\rangle_{\Gamma^h(t)}
= \left\langle \vec \id_s, (\vec\pi^h\,[\zeta\,\vec\eta])_s \right\rangle_{\Gamma^h(t)}
= \left\langle \nabs\,\vec \id, \nabs\,\vec\pi^h\,(\zeta\,\vec\eta) 
\right\rangle_{\Gamma^h(t)}\,,
\end{align*}
which yields the desired result (\ref{eq:JWB}) on noting that 
$\nabs\,.\,(\zeta\,\vec\eta) = \zeta\, \nabs\,.\,\vec\eta + 
\vec\eta\,.\,\nabs\,\zeta$.
\end{proof}

Given $\Gamma^h(t)$, we 
let $\Omega^h_+(t)$ denote the exterior of $\Gamma^h(t)$ and let
$\Omega^h_-(t)$ denote the interior of $\Gamma^h(t)$, so that
$\Gamma^h(t) = \partial \Omega^h_-(t) = \overline{\Omega^h_-(t)} \cap 
\overline{\Omega^h_+(t)}$. 
We then partition the elements of the bulk mesh 
$\mathcal{T}^h$ into interior, exterior and interfacial elements as follows.
Let
\begin{align*}
\mathcal{T}^h_-(t) & := \{ o \in \mathcal{T}^h : o \subset
\Omega^h_-(t) \} \,, \nonumber \\
\mathcal{T}^h_+(t) & := \{ o \in \mathcal{T}^h : o \subset
\Omega^h_+(t) \} \,, \nonumber \\
\mathcal{T}^h_{\Gamma^h}(t) & := \{ o \in \mathcal{T}^h : o \cap
\Gamma^h(t) \not = \emptyset \} \,. 
\end{align*}
Clearly $\mathcal{T}^h = \mathcal{T}^h_-(t) \cup \mathcal{T}^h_+(t) \cup
\mathcal{T}^h_{\Gamma}(t)$ is a disjoint partition.
In addition, we define the piecewise constant unit normal 
$\vec{\nu}^h(t)$ to $\Gamma^h(t)$ such that $\vec\nu^h(t)$ points into
$\Omega^h_+(t)$.
Moreover, we introduce the discrete density 
$\rho^h(t) \in S^h_0$ and the discrete viscosity $\mu^h(t) \in S^h_0$ as
\begin{equation*} 
\rho^h(t)\!\mid_{o} = \begin{cases}
\rho_- & o \in \mathcal{T}^h_-(t)\,, \\
\rho_+ & o \in \mathcal{T}^h_+(t)\,, \\
\tfrac12\,(\rho_- + \rho_+) & o \in \mathcal{T}^h_{\Gamma^h}(t)\,,
\end{cases}
\quad\text{and}\quad
\mu^h(t)\!\mid_{o} = \begin{cases}
\mu_- & o \in \mathcal{T}^h_-(t)\,, \\
\mu_+ & o \in \mathcal{T}^h_+(t)\,, \\
\tfrac12\,(\mu_- + \mu_+) & o \in \mathcal{T}^h_{\Gamma^h}(t)\,.
\end{cases}
\end{equation*}

In what follows we will introduce two different finite element approximations
for the free boundary problem
(\ref{eq:NSa}--d), (\ref{eq:sigma}), (\ref{eq:1a}--c), (\ref{eq:1surf}).
Here $\vec U^h(\cdot, t) \in \uspace^h$ will be an approximation to 
$\vec u(\cdot, t)$,
while $P^h(\cdot, t) \in \widehat\pspace^h(t)$ 
approximates $p(\cdot, t)$
and $\Psi^h(\cdot, t) \in \Wht$ approximates $\psi(\cdot, t)$.
When designing such a finite element approximation, a
careful decision has to be made about the {\em discrete tangential velocity} of
$\Gamma^h(t)$. The most natural choice is to select the velocity of the fluid,
i.e.\ (\ref{eq:weakGDc}) is appropriately discretized.
This then gives a natural discretization of the surfactant transport equation
(\ref{eq:1surf}). Note also that the approximation of curvature, 
recall (\ref{eq:LBop}), where 
now $\vec\varkappa= \varkappa\,\vec\nu$ is discretized
directly, goes back to the seminal paper
\cite{Dziuk91}. Overall, we then obtain the following semidiscrete
continuous-in-time finite element approximation, which is the semidiscrete
analogue of the weak formulation (\ref{eq:weakGDa}--e).
Given $\Gamma^h(0)$, $\vec U^h(\cdot,0) \in \uspace^h$
and $\Psi^h(\cdot, 0) \in \Whtz$, 
find $\Gamma^h(t)$ such that $\vec \id \!\mid_{\Gamma^h(t)} \in \Vht$
for $t \in [0,T]$, and functions 
$\vec U^h \in H^1(0,T; \uspace^h)$, 
$P^h \in L^2(0,T; \widehat\pspace^h(t))$, 
$\vec\kappa^h \in L^2(0,T; \Vht)$ and $\Psi^h \in W(\GhT)$ such that
for almost all $t \in (0,T)$
it holds that
\begin{subequations}
\begin{align}
&
\tfrac12 \left[ \ddt \left( \rho^h\,\vec U^h , \vec \xi \right)
+ \left( \rho^h\,\vec U^h_t , \vec \xi \right)
- (\rho^h\,\vec U^h, \vec \xi_t) \right]
 \nonumber \\ & \qquad
+ 2\left(\mu^h\,\mat D(\vec U^h), \mat D(\vec \xi) \right)
+ \tfrac12\left(\rho^h, 
 [(\vec I^h_2\,\vec U^h\,.\,\nabla)\,\vec U^h]\,.\,\vec \xi
- [(\vec I^h_2\,\vec U^h\,.\,\nabla)\,\vec \xi]\,.\,\vec U^h \right)
\nonumber \\ & \qquad
- \left(P^h, \nabla\,.\,\vec \xi\right)
= \left(\rho^h\,\vec f^h_1 + \vec f^h_2, \vec \xi\right)
 + \left\langle \gamma(\Psi^h)\,\vec \kappa^h + \nabs\,\pi^h\,[\gamma(\Psi^h)],
   \vec\xi\right\rangle_{\Gamma^h(t)}^h
\nonumber \\ & \hspace{9cm}
\qquad \forall\ \vec\xi \in H^1(0,T;\uspace^h) \,, \label{eq:sdGDa}\\
& \left(\nabla\,.\,\vec U^h, \varphi\right) = 0 
\qquad \forall\ \varphi \in \widehat\pspace^h(t)\,,
\label{eq:sdGDb} \\
& \left\langle \vec{\mathcal{V}}^h ,
\vec\chi \right\rangle_{\Gamma^h(t)}^h
= \left\langle \vec U^h, \vec\chi \right\rangle_{\Gamma^h(t)}^h
 \qquad\forall\ \vec\chi \in \Vht\,,
\label{eq:sdGDc} \\
& \left\langle \vec\kappa^h , \vec\eta \right\rangle_{\Gamma^h(t)}^h
+ \left\langle \nabs\,\vec \id, \nabs\,\vec \eta \right\rangle_{\Gamma^h(t)}
 = 0  \qquad\forall\ \vec\eta \in \Vht\,,\label{eq:sdGDd} \\
& \ddt
\left\langle \Psi^h, \chi \right\rangle_{\Gamma^h(t)}^h
+ \Ds\left\langle \nabs\, \Psi^h, \nabs\, \chi
\right\rangle_{\Gamma^h(t)}
= 
\left\langle \Psi^h, \matpartxh\, \chi \right\rangle_{\Gamma^h(t)}^h 
\qquad \forall\ \chi \in W(\GhT)\,,
\label{eq:sdGDe}
\end{align}
\end{subequations}
where we recall (\ref{eq:Xht}). 
Here we have defined 
$\vec f^h_i(\cdot,t) := \vec I^h_2\,\vec f_i(\cdot,t)$, $i= 1, 2$,
where here and throughout we assume that $\vec f_i \in
L^2(0,T;[C(\overline\Omega)]^d)$, $i=1,2$.
We observe that (\ref{eq:sdGDc}) collapses to
$\vec{\mathcal{V}}^h = \vec\pi^h\, \vec U^h \!\mid_{\Gamma^h(t)} \in \Vht$, 
which on recalling (\ref{eq:matpartxh}) turns out to be crucial for
the stability analysis for (\ref{eq:sdGDa}--e). It is for this reason that we
use mass lumping in (\ref{eq:sdGDc}), which then leads to mass lumping 
having to be used in the last term in (\ref{eq:sdGDa}), as well as for the
first term in (\ref{eq:sdGDd}).

We remark that the formulation (\ref{eq:sdGDe}) for the surfactant transport
equation (\ref{eq:1surf}) falls into the framework of ESFEM 
(evolving surface finite element method) as
coined by the authors in \cite{DziukE07}. In this particular instance, the
velocity of $\Gamma^h(t)$ is not a priori fixed, rather it arises
implicitly through the evolution of $\Gamma^h(t)$ as determined by
(\ref{eq:sdGDa}--e). Here we recall the important property 
(\ref{eq:mpbf}), 
which means that (\ref{eq:sdGDe}) simplifies if formulated
in terms of the basis functions $\{\chi^h_k(\cdot,t)\}_{k=1}^{K_\Gamma}$ of
$\Wht$.

In the following lemma we derive a discrete analogue of (\ref{eq:d1}).
\begin{lem} \label{lem:stabGD}
Let $\{(\Gamma^h, \vec U^h, P^h, \vec\kappa^h, \Psi^h)(t)\}_{t\in[0,T]}$ 
be a solution to {\rm (\ref{eq:sdGDa}--e)}. Then
\begin{align} \label{eq:lemGD}
& \tfrac12\,\ddt \,\|[\rho^h]^\frac12\,\vec U^h \|_0^2 +
2\,\| [\mu^h]^\frac12\,\mat D(\vec U^h)\|_0^2 \nonumber \\ & \hspace{3cm}
= (\rho^h\,\vec f_1^h + 
\vec f_2^h, \vec U^h)
+ \left\langle \gamma(\Psi^h) \,\vec\kappa^h + 
\nabs\,\pi^h\,[\gamma (\Psi^h)] , \vec U^h \right\rangle_{\Gamma^h(t)}^h\,.
\end{align}
\end{lem}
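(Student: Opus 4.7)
The plan is to test the scheme against the solution itself: take $\vec\xi = \vec U^h$ in (\ref{eq:sdGDa}) and $\varphi = P^h$ in (\ref{eq:sdGDb}), and then check that every non-obvious term either cancels or is exactly what we want. This mirrors the derivation of the continuous identity (\ref{eq:d1}), and in fact the whole formulation (\ref{eq:sdGDa}) has been set up, via the Temam-style skew-symmetric rewriting of (\ref{eq:rhot3}), precisely so that this test works at the discrete level.

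First I would handle the three terms in the first line of (\ref{eq:sdGDa}). With $\vec\xi = \vec U^h$ the bracket becomes
\begin{equation*}
\tfrac12\,\ddt (\rho^h\,\vec U^h, \vec U^h)
 + \tfrac12\bigl[(\rho^h\,\vec U^h_t, \vec U^h) - (\rho^h\,\vec U^h, \vec U^h_t)\bigr]
 = \tfrac12\,\ddt (\rho^h\,\vec U^h, \vec U^h),
\end{equation*}
since $\rho^h$ is scalar-valued; and since $(\rho^h\,\vec U^h, \vec U^h) = \|[\rho^h]^{\frac12}\vec U^h\|_0^2$, this produces the first term on the left of (\ref{eq:lemGD}). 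The key point is that the derivative $\rho^h_t$ never appears, so the (distributional) in-time jumps of $\rho^h$ across interfacial elements play no role; this is the discrete counterpart of the interface jump cancellation that (\ref{eq:advect}) provided in the continuous setting. Next, the skew-symmetric convective term, with $\vec\xi = \vec U^h$, reduces to
\begin{equation*}
\tfrac12\bigl(\rho^h, [(\vec I^h_2\,\vec U^h\,.\,\nabla)\,\vec U^h]\,.\,\vec U^h - [(\vec I^h_2\,\vec U^h\,.\,\nabla)\,\vec U^h]\,.\,\vec U^h\bigr) = 0,
\end{equation*}
so no convective contribution is left over. The viscous term immediately gives $2\,\|[\mu^h]^{\frac12}\mat D(\vec U^h)\|_0^2$.

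Next I would deal with the pressure. Testing (\ref{eq:sdGDb}) against $\varphi = P^h \in \widehat\pspace^h(t)$ (which is allowed since $P^h$ is assumed to live in that space) gives $(\nabla\,.\,\vec U^h, P^h) = 0$, so the term $-(P^h,\nabla\,.\,\vec U^h)$ in (\ref{eq:sdGDa}) drops out. The remaining right-hand side of (\ref{eq:sdGDa}), with the choice $\vec\xi = \vec U^h$, is precisely the right-hand side of (\ref{eq:lemGD}).

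I do not anticipate any genuine obstacle; the lemma is a direct consistency check of the semidiscrete scheme, and everything reduces to bookkeeping once the two test choices are made. The only point requiring mild care is the admissibility of $\vec\xi = \vec U^h$ in (\ref{eq:sdGDa}), which is immediate from $\vec U^h \in H^1(0,T;\uspace^h)$, and the fact that the surface term $\langle\,\cdot\,,\vec U^h\rangle_{\Gamma^h(t)}^h$ is well-defined because $\vec U^h \in [C(\overline\Omega)]^d$ so its trace on the polyhedral surface $\Gamma^h(t)$ makes sense pointwise at the vertices $\{\vec q_k^h(t)\}$ used in (\ref{eq:defNI}).
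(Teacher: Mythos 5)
Your proof is correct and follows exactly the paper's argument: test (\ref{eq:sdGDa}) with $\vec\xi = \vec U^h$ and (\ref{eq:sdGDb}) with $\varphi = P^h$, after which the time-derivative bracket collapses to $\tfrac12\,\ddt\,\|[\rho^h]^{\frac12}\vec U^h\|_0^2$, the skew-symmetrized convective term and the pressure term vanish, and the remaining terms are precisely (\ref{eq:lemGD}). The paper states this in one line; your expanded bookkeeping is the intended content.
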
 
\begin{proof}
The desired result (\ref{eq:lemGD}) follows immediately on choosing
$\vec \xi = \vec U^h$ in (\ref{eq:sdGDa}) and $\varphi = P^h$ in 
(\ref{eq:sdGDb}). 
\end{proof}

The next theorem derives a discrete analogue of the energy law (\ref{eq:d5}).
Here, similarly to (\ref{eq:d4}), it will be crucial to test (\ref{eq:sdGDe}) 
with an appropriate discrete variant of $F'(\Psi^h)$. It is for this reason
that we have to make the following well-posedness assumption. 
\begin{equation} \label{eq:assPsih}
\Psi^h(\cdot, t) < \psi_\infty \quad\text{on $\Gamma^h(t)$}\,,
\quad \forall\ t \in [0,T]\,.
\end{equation}
The theorem also establishes nonnegativity of $\Psi^h$ under the assumption
that
\begin{equation} \label{eq:chij}
\int_{\sigma^h_j(t)} \nabs \chi^h_i \,.\,\nabs \chi^h_k \dH{d-1} \leq 0 
\quad \forall\ i \neq k\,,\quad \forall\ t \in [0,T]\,,
\qquad j = 1,\ldots,J_\Gamma\,.
\end{equation}
We note that (\ref{eq:chij}) always holds for $d=2$, and it holds for $d=3$ if
all the triangles $\sigma^h_j(t)$ of $\Gamma^h(t)$ have no obtuse angles. A
direct consequence of (\ref{eq:chij}) is that for any monotonic
function $G \in C^{0,1}(\R)$ it holds that
\begin{align} \label{eq:LG}
& L_G\,
\left\langle \nabs\, \xi, \nabs\, \pi^{h}\,[G(\xi)] 
\right\rangle_{\Gamma^h(t)} \geq
\left\langle \nabs\, \pi^{h}\,[G(\xi)], \nabs\, \pi^{h}\,[G(\xi)] 
\right\rangle_{\Gamma^h(t)}  \quad \forall\ \xi \in \Wht\,,
\nonumber \\ & \hspace{12cm} \forall\ t \in [0,T]\,,
\end{align}
where $L_G \in \R_{>0}$ denotes its Lipschitz constant.
For example, (\ref{eq:LG}) holds for 
$G(r) = [r]_- := \min\{0, r\}$ with $L_G = 1$.

For the following theorem, we denote the $L^\infty$--norm on $\Gamma^h(t)$ by
$\| \cdot \|_{\infty, \Gamma^h(t)}$, i.e.\
$\| z \|_{\infty, \Gamma^h(t)} := \esssup_{\Gamma^h(t)} |z|$ for
$z : \Gamma^h(t) \to \R$.

\begin{thm} \label{thm:stabGD}
Let $\{(\Gamma^h, \vec U^h, P^h,\vec\kappa^h, \Psi^h)(t)\}_{t\in[0,T]}$ 
be a solution to {\rm (\ref{eq:sdGDa}--e)}. Then
\begin{equation} \label{eq:totalpsih}
\ddt \left\langle \Psi^h, 1 \right\rangle_{\Gamma^h(t)} = 0\,.
\end{equation}
In addition, if $\Ds=0$ or if {\rm (\ref{eq:chij})} and
\begin{equation} \label{eq:Xinfty}
\max_{0\leq t \leq T} \| \nabs\,.\,\vec{\mathcal{V}}^h \|_{\infty,\Gamma^h(t)} <
\infty
\end{equation}
hold, then
\begin{equation} \label{eq:sddmp}
\Psi^h(\cdot,t) \geq 0 \quad \forall\ t \in (0,T]
 \qquad \text{if}\quad \Psi^h(\cdot,0) \geq 0\,.
\end{equation}
Moreover, if $d=2$ and if {\rm (\ref{eq:sddmp})} and
{\rm (\ref{eq:assPsih})} hold,
then
\begin{align}
& \ddt\left(\tfrac12\,\|[\rho^h]^\frac12\,\vec U^h\|^2_{0} + 
 \left\langle F(\Psi^h) , 1 \right\rangle_{\Gamma^h(t)}^h \right) 
+ 2\,\|[\mu^h]^\frac12\,\mat D(\vec U^h)\|^2_{0}
\leq \left(\rho^h\,\vec f^h_1 + \vec f^h_2, \vec U^h\right) \,.
\label{eq:stabGD}
\end{align}
\end{thm}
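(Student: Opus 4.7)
The plan is to treat the three claims in sequence, mimicking the formal continuous computation (\ref{eq:d1})--(\ref{eq:d5}) at the discrete level. For the conservation statement (\ref{eq:totalpsih}), the plan is to test (\ref{eq:sdGDe}) with $\chi\equiv 1\in W(\GhT)$; the diffusion and right-hand side terms vanish because $\nabs 1=0$ and $\matpartxh 1=0$, and $\langle\Psi^h,1\rangle_{\Gamma^h(t)}^h$ equals the exact integral since $\Psi^h(\cdot,t)$ is continuous piecewise linear on $\Gamma^h(t)$. For (\ref{eq:sddmp}) in the case $\Ds=0$, testing (\ref{eq:sdGDe}) with $\chi=\chi_k^h$ and invoking (\ref{eq:mpbf}) reduces the equation to $\ddt[\omega_k(t)\Psi_k^h(t)]=0$, where $\omega_k(t):=\langle\chi_k^h,1\rangle_{\Gamma^h(t)}^h>0$, so each nodal value inherits the sign of its initial datum. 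When $\Ds>0$ under (\ref{eq:chij}) and (\ref{eq:Xinfty}), I would first use (\ref{eq:DElem5.6NI}) to recast (\ref{eq:sdGDe}) as
\[
\langle\matpartxh\,\Psi^h,\chi\rangle_{\Gamma^h(t)}^h+\Ds\langle\nabs\,\Psi^h,\nabs\,\chi\rangle_{\Gamma^h(t)}+\langle\Psi^h\,\chi,\nabs\,.\,\vec{\mathcal{V}}^h\rangle_{\Gamma^h(t)}^h=0,
\]
and then test with $\chi=\pi^h[\Psi^h]_-\in W(\GhT)$. Using $\Psi_k^h[\Psi_k^h]_-=([\Psi_k^h]_-)^2$ together with (\ref{eq:p3}) to differentiate $\omega_k$, this collapses to a differential inequality for $\|\pi^h[\Psi^h]_-\|_h^2$; (\ref{eq:LG}) with $G(r)=[r]_-$ (so $L_G=1$) makes the $\Ds$-term nonnegative and hence discardable, and Gronwall combined with (\ref{eq:Xinfty}) then forces $\pi^h[\Psi^h]_-\equiv 0$.

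The energy inequality (\ref{eq:stabGD}) is the main step, and the strategy is to follow the continuous computation (\ref{eq:d4})--(\ref{eq:d45}) almost line-for-line. The goal is to combine (\ref{eq:lemGD}) with a discrete evolution law for $\langle F(\Psi^h),1\rangle_{\Gamma^h(t)}^h$. To that end I would test the rewritten (\ref{eq:sdGDe}) above with $\chi=\pi^h\,F'(\Psi^h+\alpha)$ for small $\alpha>0$, which is admissible by (\ref{eq:assPsih}) and the already-proven (\ref{eq:sddmp}) since these together guarantee $\Psi^h+\alpha\in(0,\psi_\infty)$ at every vertex. At each vertex the chain rule gives $F'(\Psi_k^h+\alpha)\,\ddt\Psi_k^h=\ddt F(\Psi_k^h+\alpha)$; handling the time dependence of $\omega_k$ via (\ref{eq:p3}) and using (\ref{eq:F}) to identify $F(r)-r\,F'(r)=\gamma(r)$, the computation collapses to
\[
\ddt\langle F(\Psi^h+\alpha),1\rangle_{\Gamma^h(t)}^h+\Ds\langle\nabs\,\Psi^h,\nabs\,\pi^h F'(\Psi^h+\alpha)\rangle_{\Gamma^h(t)}=\langle\pi^h[\gamma(\Psi^h+\alpha)+\alpha F'(\Psi^h+\alpha)],\nabs\,.\,\vec{\mathcal{V}}^h\rangle_{\Gamma^h(t)}.
\]
Since (\ref{eq:chij}) holds automatically for $d=2$ and $F'$ is monotonic, the $\Ds$-term is nonnegative and may be dropped; passing $\alpha\to 0$, with (\ref{eq:F0}) annihilating the $\alpha F'(\Psi^h+\alpha)$ correction, produces an inequality with right-hand side $\langle\pi^h\gamma(\Psi^h),\nabs\,.\,\vec{\mathcal{V}}^h\rangle_{\Gamma^h(t)}$. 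I would then apply (\ref{eq:JWB}) with $\zeta=\pi^h\gamma(\Psi^h)$ and $\vec\eta=\vec{\mathcal{V}}^h$, combine with (\ref{eq:sdGDd}) tested at $\vec\eta=\vec\pi^h[\pi^h\gamma(\Psi^h)\vec{\mathcal{V}}^h]\in\Vht$, and invoke (\ref{eq:sdGDc}) together with the coincidence of mass-lumped and exact integration for a piecewise linear factor paired with a piecewise constant one, to rewrite the right-hand side as $-\langle\gamma(\Psi^h)\vec\kappa^h+\nabs\,\pi^h\gamma(\Psi^h),\vec U^h\rangle_{\Gamma^h(t)}^h$. Adding this identity to (\ref{eq:lemGD}) cancels the surface-tension contribution and delivers (\ref{eq:stabGD}).

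The principal technical obstacle is the $\alpha\to 0$ limit in the presence of a possibly singular $F'$ at the origin, as occurs for the Langmuir-type examples (\ref{eq:gamma1})--(\ref{eq:gamma2}): this is exactly why (\ref{eq:F0}) was imposed and why nonnegativity from (\ref{eq:sddmp}) is needed first, to keep $\Psi^h+\alpha$ in the effective domain of $F'$. The restriction $d=2$ is dictated by (\ref{eq:JWB}), which has no known analogue in three dimensions; without it, the identity converting $\langle\pi^h\gamma(\Psi^h),\nabs\,.\,\vec{\mathcal{V}}^h\rangle_{\Gamma^h(t)}$ into the surface-tension contribution present in (\ref{eq:lemGD}) fails and the crucial cancellation breaks down. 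A pervasive secondary hurdle is the bookkeeping of mass-lumped versus exact pairings, since the required cancellations hold precisely because the two coincide on the specific combinations of piecewise linear and piecewise constant factors that appear throughout the argument.
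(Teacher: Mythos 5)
Your proposal is correct and follows essentially the same route as the paper's proof: $\chi=1$ for conservation, $\chi=\chi^h_k$ resp.\ $\chi=\pi^h[\Psi^h]_-$ with (\ref{eq:LG}), (\ref{eq:DElem5.6NI}) and Gronwall for nonnegativity, and $\chi=\pi^h[F'(\Psi^h+\alpha)]$ combined with (\ref{eq:F}), (\ref{eq:JWB}), (\ref{eq:sdGDc},d) and the limit $\alpha\to0$ via (\ref{eq:F0}) for the energy bound. The only (immaterial) deviation is the order of operations in the last step — you drop the diffusion term and pass $\alpha\to0$ before applying (\ref{eq:JWB}) and (\ref{eq:sdGDd}), whereas the paper performs the geometric manipulations at the $\alpha$-level first.
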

\begin{proof}
The conservation property (\ref{eq:totalpsih}) follows immediately from
choosing $\chi = 1$ in (\ref{eq:sdGDe}).

If $\Ds=0$ then it immediately follows from (\ref{eq:sdGDe}), on recalling
(\ref{eq:mpbf}), that 
$$\ddt\left\langle \Psi^h, \chi_k^h
\right\rangle_{\Gamma^h(t)}^h
=\ddt \left[ \left\langle 1, \chi^h_k \right\rangle_{\Gamma^h(t)}
\Psi^h(\vec q^h_k(t), t) \right] = 0\,,$$ 
for $k = 1,\ldots,K_\Gamma$, which yields the desired result (\ref{eq:sddmp}) 
if $\Ds=0$. 
If $\Ds > 0$, then choosing $\chi = \pi^h\,[\Psi^h]_-$ in
(\ref{eq:sdGDe}) yields, on noting (\ref{eq:LG}) with $G = [\cdot]_-$
and (\ref{eq:DElem5.6NI}), that
\begin{align*}
\ddt \left\langle [\Psi^h]_-^2, 1 \right\rangle_{\Gamma^h(t)}^h 
& = \ddt \left\langle \Psi^h, [\Psi^h]_- \right\rangle_{\Gamma^h(t)}^h
\leq \left\langle \Psi^h, \matpartxh\,\pi^h\,[\Psi^h]_- 
\right\rangle_{\Gamma^h(t)}^h \nonumber \\ &
= \left\langle [\Psi^h]_-, \matpartxh\,\pi^h\,[\Psi^h]_- 
\right\rangle_{\Gamma^h(t)}^h 
= \tfrac12 \left\langle \matpartxh\, \pi^h\,[\Psi^h]_-^2 , 1 
\right\rangle_{\Gamma^h(t)}^h \nonumber \\ &
= \tfrac12\,\ddt \left\langle \pi^h\,[\Psi^h]_-^2 , 1 
\right\rangle_{\Gamma^h(t)}^h - \tfrac12 \left\langle \pi^h\,[\Psi^h]_-^2 , 
\nabs\,.\,\vec{\mathcal{V}}^h \right\rangle_{\Gamma^h(t)}^h
\nonumber \\ &
\leq - \left\langle \pi^h\,[\Psi^h]_-^2 , 
\nabs\,.\,\vec{\mathcal{V}}^h \right\rangle_{\Gamma^h(t)}^h 
\leq \| \nabs\,.\,\vec{\mathcal{V}}^h \|_{\infty,\Gamma^h(t)}
\left\langle \pi^h\,[\Psi^h]_-^2 , 1 \right\rangle_{\Gamma^h(t)}^h\,.
\end{align*}
A Gronwall inequality, together with (\ref{eq:Xinfty}),
now yields our desired result (\ref{eq:sddmp}). 

For the proof of (\ref{eq:stabGD}) we note that
the assumption (\ref{eq:assPsih}) means that we can
choose $\chi = \pi^h\,[F'(\Psi^h + \alpha)]$ in (\ref{eq:sdGDe}), with
$\alpha \in \R_{>0}$ such that $\Psi^h + \alpha < \psi_\infty$, to yield,
on recalling (\ref{eq:F}) and (\ref{eq:DElem5.6NI}), that
\begin{align}
& \ddt\, \left\langle F(\Psi^h+\alpha) - \gamma(\Psi^h+\alpha), 1 
\right\rangle_{\Gamma^h(t)}^h +
\Ds \left\langle \nabs\, (\Psi^h+\alpha), \nabs\, \pi^h\,[F'(\Psi^h+\alpha)]
\right\rangle_{\Gamma^h(t)} 
\nonumber \\ & \qquad
= \left\langle \Psi^h + \alpha, 
\matpartxh\,\pi^h\,[F'(\Psi^h + \alpha)] \right\rangle_{\Gamma^h(t)}^h
+ \alpha  \left\langle F'(\Psi^h + \alpha) , \nabs\,.\,\vec{\mathcal{V}}^h 
\right\rangle_{\Gamma^h(t)}^h\,,
\label{eq:sp1}
\end{align}
similarly to (\ref{eq:d4}).
For the remainder of the proof we assume that $d=2$.
It follows from (\ref{eq:Fdd}), (\ref{eq:defNI}) and (\ref{eq:p1}) that
we have a discrete analogue of (\ref{eq:d3}), i.e.\
\begin{equation} \label{eq:sp11} 
\left\langle \Psi^h + \alpha, \matpartxh\,\pi^h\,[F'(\Psi^h+\alpha)] 
\right\rangle_{\Gamma^h(t)}^h
= - \left\langle \matpartxh\,\pi^h\,[\gamma(\Psi^h+\alpha)], 1 
\right\rangle_{\Gamma^h(t)}^h\,,
\end{equation}
which means that (\ref{eq:sp1}), together with (\ref{eq:DElem5.6NI}), 
(\ref{eq:JWB}) and (\ref{eq:sdGDc},d), implies that
\begin{align}
& \ddt\, \left\langle F(\Psi^h + \alpha), 1 \right\rangle_{\Gamma^h(t)}^h +
\Ds \left\langle \nabs\, (\Psi^h+\alpha), \nabs\, \pi^h\,[F'(\Psi^h+\alpha)]
\right\rangle_{\Gamma^h(t)} 
\nonumber \\ & \qquad\qquad
= \left\langle \pi^h\,[\gamma(\Psi^h+\alpha)], 
\nabs\,.\,\vec{\mathcal{V}}^h \right\rangle_{\Gamma^h(t)} 
+ \alpha  \left\langle F'(\Psi^h + \alpha) , \nabs\,.\,\vec{\mathcal{V}}^h 
\right\rangle_{\Gamma^h(t)}^h
\nonumber \\ & \qquad\qquad
= \left\langle \nabs\,\vec \id, \nabs\,\pi^h\,[\gamma(\Psi^h+\alpha)\,\vec{\mathcal{V}}^h]
  \right\rangle_{\Gamma^h(t)}
- \left\langle \nabs\,\pi^h\,[\gamma(\Psi^h+\alpha)], 
\vec{\mathcal{V}}^h \right\rangle_{\Gamma^h(t)}
\nonumber \\ & \qquad\qquad\qquad
+ \alpha \left\langle F'(\Psi^h + \alpha) , \nabs\,.\,\vec{\mathcal{V}}^h 
\right\rangle_{\Gamma^h(t)}^h
\nonumber \\ & \qquad\qquad
= - \left\langle \vec\kappa^h , \gamma(\Psi^h+\alpha)\,\vec U^h 
\right\rangle_{\Gamma^h(t)}^h
- \left\langle \nabs\,\pi^h\,[\gamma(\Psi^h+\alpha)], 
\vec U^h \right\rangle_{\Gamma^h(t)}^h
\nonumber \\ & \qquad\qquad\qquad
+ \alpha \left\langle F'(\Psi^h + \alpha) , \nabs\,.\,\vec{\mathcal{V}}^h 
\right\rangle_{\Gamma^h(t)}^h\,.
\label{eq:sp2}
\end{align}
Next, on noting for $\Ds > 0$
that $G(\cdot) = F'(\cdot+\alpha)$ is monotonic, as $F$ is convex, 
and has a finite Lipschitz constant, on noting (\ref{eq:sddmp}),
it follows from our assumptions and (\ref{eq:LG}) that
\begin{equation} \label{eq:sp3}
\Ds \left\langle \nabs\, (\Psi^h+\alpha), \nabs\, \pi^h\,[F'(\Psi^h+\alpha)]
\right\rangle_{\Gamma^h(t)} \geq 0\,,
\end{equation}
and so we obtain that
\begin{align} \label{eq:sp4}
\ddt\, \left\langle F(\Psi^h + \alpha), 1 \right\rangle_{\Gamma^h(t)}^h & \leq
- \left\langle \vec\kappa^h , \gamma(\Psi^h+\alpha)\,\vec U^h 
\right\rangle_{\Gamma^h(t)}^h
- \left\langle \nabs\,\pi^h\,[\gamma(\Psi^h+\alpha)], 
\vec U^h \right\rangle_{\Gamma^h(t)}^h
\nonumber \\ & \qquad
+ \alpha \left\langle F'(\Psi^h + \alpha) , \nabs\,.\,\vec{\mathcal{V}}^h 
\right\rangle_{\Gamma^h(t)}^h\,.
\end{align}
Passing to the limit $\alpha\to 0$ in (\ref{eq:sp4}), 
noting (\ref{eq:F0}), and combining with
(\ref{eq:lemGD}), yields the desired result (\ref{eq:stabGD}).
\end{proof}

Clearly, (\ref{eq:totalpsih}) and (\ref{eq:stabGD}) are natural discrete 
analogues of (\ref{eq:totalpsi}) and (\ref{eq:d5}), respectively.

We note that while (\ref{eq:sdGDa}--e) is a very natural approximation, in
particular (\ref{eq:sdGDe}) for the surfactant transport, see also
\cite{DziukE07}, a drawback in practice is that the finitely many vertices of 
the triangulations $\Gamma^h(t)$ are moved with the flow, which can lead to
coalescence. If a remeshing procedure is applied to $\Gamma^h(t)$, then
theoretical results like stability are no longer valid.

It is with this in mind that we would like to introduce an alternative finite
element approximation. It will be based on the weak formulation 
(\ref{eq:weaka}--e), and on the schemes from \cite{spurious,fluidfbp} for the
two-phase flow problem in the bulk. Of course, the discretization of
(\ref{eq:weake}) is going to be more complicated than (\ref{eq:sdGDe}), but
the advantage is that good mesh properties can be shown for $\Gamma^h(t)$. In
practice this means that no remeshings or reparameterizations need to be
performed for $\Gamma^h(t)$.

The main difference to (\ref{eq:sdGDa}--e) is that (\ref{eq:sdGDc}) is replaced
with a discrete variant of (\ref{eq:weakc}). In particular, the discrete
tangential velocity of $\Gamma^h(t)$ is not defined via $\vec U^h(\cdot,t)$, 
but it
is chosen totally independent from the surrounding fluid. In fact, the discrete
tangential velocity is not prescribed directly, but it is implicitly
introduced via the novel approximation of curvature which was first introduced
by the authors in \cite{triplej} for the case $d=2$, and in \cite{gflows3d} for
the case $d=3$. This discrete tangential velocity is such that, 
in the case $d=2$, $\Gamma^h(t)$
will remain equidistributed for all times $t \in (0,T]$. For $d=3$, a weaker 
property can be shown, which still guarantees good meshes in practice.
We refer to \cite{triplej,gflows3d} for more details.

For this new finite element approximation we are unable to guarantee the
nonnegativity of $\Psi^h(\cdot,t)$, which is in contrast to the result
(\ref{eq:sddmp}) for the scheme (\ref{eq:sdGDa}--e). 
It is for this reason that, 
following similar ideas in \cite{surf,surf2d}, we introduce regularizations
$F_\epsilon \in C^2(-\infty,\psi_\infty)$ of $F\in C^2(0,\psi_\infty)$, where
$\epsilon > 0$ is a regularization parameter. In particular, we set
\begin{subequations}
\begin{equation} \label{eq:Freg}
F_\epsilon(r) = \begin{cases}
F(r) & r \geq \epsilon\,, \\
F(\epsilon) + F'(\epsilon)\,(r-\epsilon) +
\frac12\,F''(\epsilon)\,(r-\epsilon)^2 & r \leq \epsilon\,,
\end{cases}
\end{equation}
which in view of (\ref{eq:F}) leads to
\begin{equation} \label{eq:geps}
\gamma_\epsilon(r) = \begin{cases}
\gamma(r) & r \geq \epsilon\,, \\
\gamma(\epsilon) + 
\frac12\,F''(\epsilon)\,(\epsilon^2 - r^2) & r \leq \epsilon\,,
\end{cases}
\end{equation}
\end{subequations}
so that
\begin{equation} \label{eq:Feps}
\gamma_\epsilon(r) = F_\epsilon(r) - r\,F'_\epsilon(r) 
\quad\text{and}\quad \gamma_\epsilon'(r) = - r\,F_\epsilon''(r) 
\qquad \forall\ r < \psi_\infty \,.
\end{equation}

We propose the following semidiscrete
continuous-in-time finite element approximation, which is the semidiscrete
analogue of the weak formulation (\ref{eq:weaka}--e).
Given $\Gamma^h(0)$, $\vec U^h(\cdot,0) \in \uspace^h$
and $\Psi^h(\cdot, 0) \in \Whtz$, 
find $\Gamma^h(t)$ such that $\vec \id \!\mid_{\Gamma^h(t)} \in \Vht$
for $t \in [0,T]$, and functions 
$\vec U^h \in H^1(0,T; \uspace^h)$, 
$P^h \in L^2(0,T; \widehat\pspace^h(t))$, 
$\kappa^h \in L^2(0,T; \Wht)$ and $\Psi^h \in W(\GhT)$ such that
for almost all $t \in (0,T)$
it holds that
\begin{subequations}
\begin{align}
&
\tfrac12 \left[ \ddt \left( \rho^h\,\vec U^h , \vec \xi \right)
+ \left( \rho^h\,\vec U^h_t , \vec \xi \right)
- (\rho^h\,\vec U^h, \vec \xi_t) \right]
 \nonumber \\ & \qquad
+ 2\left(\mu^h\,\mat D(\vec U^h), \mat D(\vec \xi) \right)
+ \tfrac12\left(\rho^h, 
 [(\vec I^h_2\,\vec U^h\,.\,\nabla)\,\vec U^h]\,.\,\vec \xi
- [(\vec I^h_2\,\vec U^h\,.\,\nabla)\,\vec \xi]\,.\,\vec U^h \right)
\nonumber \\ & \qquad
- \left(P^h, \nabla\,.\,\vec \xi\right)
= \left(\rho^h\,\vec f^h_1 + \vec f^h_2, \vec \xi\right)
 + \left\langle \pi^h\,[\gamma_\epsilon(\Psi^h)\,\kappa^h]\,\vec\nu^h,
   \vec\xi\right\rangle_{\Gamma^h(t)}
\nonumber \\ & \qquad\qquad\qquad\qquad\qquad
 + \left\langle \nabs\,\pi^h\,[\gamma_\epsilon(\Psi^h)], 
   \vec\xi\right\rangle_{\Gamma^h(t)}^h
\qquad \forall\ \vec\xi \in H^1(0,T;\uspace^h)
\,, \label{eq:sdHGa}\\
& \left(\nabla\,.\,\vec U^h, \varphi\right)  = 0 
\quad \forall\ \varphi \in \widehat\pspace^h(t)\,,
\label{eq:sdHGb} \\
& \left\langle \vec{\mathcal{V}}^h ,
\chi\,\vec\nu^h \right\rangle_{\Gamma^h(t)}^h
= \left\langle \vec U^h, 
\chi\,\vec\nu^h \right\rangle_{\Gamma^h(t)} 
 \quad\forall\ \chi \in \Wht\,,
\label{eq:sdHGc} \\
& \left\langle \kappa^h\,\vec\nu^h, \vec\eta \right\rangle_{\Gamma^h(t)}^h
+ \left\langle \nabs\,\vec \id, \nabs\,\vec \eta \right\rangle_{\Gamma^h(t)}
 = 0  \quad\forall\ \vec\eta \in \Vht\,, \label{eq:sdHGd} \\
& \ddt
\left\langle \Psi^h, \chi \right\rangle_{\Gamma^h(t)}^h 
+ \Ds\left\langle \nabs\, \Psi^h, \nabs\, \chi
\right\rangle_{\Gamma^h(t)}
= \left\langle \Psi^h, \matpartxh\, \chi \right\rangle_{\Gamma^h(t)}^h 
- \left\langle \Psi^h_{\star,\epsilon} \left( \vec{\mathcal{V}}^h - \vec U^h \right) ,
\nabs\,\chi \right\rangle_{\Gamma^h(t)}^h
\nonumber \\ & \hspace{9cm}
\qquad \forall\ \chi \in W(\GhT)\,,
\label{eq:sdHGe}
\end{align}
\end{subequations}
where we recall (\ref{eq:Xht}). 
Here $\Psi^h_{\star,\epsilon} = \Psi^h$ for $d=3$ and, on recalling 
(\ref{eq:Feps}),
\begin{equation} \label{eq:Psihstar}
\Psi^h_{\star,\epsilon} = \begin{cases}
- \frac{\gamma_\epsilon(\Psi^h_k) - \gamma_\epsilon(\Psi^h_{k-1})}
{F'_\epsilon(\Psi^h_k) - F'_\epsilon(\Psi^h_{k-1})} & 
F'_\epsilon(\Psi^h_{k-1}) \not= F'_\epsilon(\Psi^h_k)\,, \\
\frac12\,(\Psi^h_{k-1} + \Psi^h_k) 
&  F'_\epsilon(\Psi^h_{k-1}) = F'_\epsilon(\Psi^h_k)\,,
\end{cases}
\quad\text{on}\quad [\vec q^h_{k-1}, \vec q^h_{k}]
\quad\forall\ k \in \{1,\ldots,K_\Gamma\}
\end{equation}
for $d=2$. Here we have introduced the shorthand notation
$\Psi^h_k(t) = \Psi^h(\vec q^h_k(t), t)$, for $k=1,\ldots,K_\Gamma$,
and for notational convenience we have
dropped the dependence on $t$ in (\ref{eq:Psihstar}). 
The definition in (\ref{eq:Psihstar}) is chosen such that for $d=2$
it holds that
\begin{align} \label{eq:doctored}
& \left\langle \Psi^h_{\star,\epsilon}\, \vec\eta ,
\nabs\,\pi^h\,[F'_\epsilon(\Psi^h)] \right\rangle_{\Gamma^h(t)}^h
= \left\langle \Psi^h_{\star,\epsilon}\, \vec\eta ,
\nabs\,\pi^h\,[F'_\epsilon(\Psi^h)] \right\rangle_{\Gamma^h(t)}
= - \left\langle \vec\eta, \nabs\,\pi^h\,[\gamma_\epsilon(\Psi^h)]
\right\rangle_{\Gamma^h(t)} \nonumber\\ & \hspace{11cm}\forall\ \vec\eta \in \Vht\,,
\end{align}
which will be crucial for the stability proof for (\ref{eq:sdHGa}--e). Note
that here the regularization (\ref{eq:Freg},b) is required in order to make
the definition (\ref{eq:Psihstar}) well-defined, 
where we recall from (\ref{eq:F}) that $F'$ in general is only well-defined
on the positive real line.
We observe that (\ref{eq:doctored}) for 
$\vec\eta = \vec{\mathcal{V}}^h - \vec\pi^h\,\vec U^h \!\mid_{\Gamma^h(t)}$ 
mimics (\ref{eq:dbgn}) on the discrete level.

Similarly to
Theorem~\ref{thm:stabGD} we are only able to prove stability for the scheme
(\ref{eq:sdHGa}--e) in the case $d=2$. Hence in the case $d=3$ the definition
(\ref{eq:Psihstar}) is not required, and so $\gamma_\epsilon$ in 
(\ref{eq:sdHGa}) may also be replaced by $\gamma$.

We remark that the formulation (\ref{eq:sdHGe}) for the surfactant transport
equation (\ref{eq:1surf}) falls into the framework of ALE ESFEM 
(arbitrary Lagrangian Eulerian evolving surface finite element method) as
coined by the authors in \cite{ElliottS12}. In this particular instance, the
tangential velocity of $\Gamma^h(t)$ is not a priori fixed, rather it arises
implicitly through the evolution of $\Gamma^h(t)$ as determined by
(\ref{eq:sdHGa}--e). 

Similarly to Lemma~\ref{lem:stabGD}, in the following lemma 
we derive a discrete analogue of (\ref{eq:d1}).
\begin{lem} \label{lem:stabHG}
Let $\{(\Gamma^h, \vec U^h, P^h, \kappa^h, \Psi^h)(t)\}_{t\in[0,T]}$ 
be a solution to {\rm (\ref{eq:sdHGa}--e)}. Then
\begin{align} \label{eq:lemHG}
& \tfrac12\,\ddt \,\|[\rho^h]^\frac12\,\vec U^h \|_0^2 +
2\,\| [\mu^h]^\frac12\,\mat D(\vec U^h)\|_0^2 \nonumber \\ & \qquad
= (\rho^h\,\vec f_1^h + 
\vec f_2^h, \vec U^h)
+ \left\langle \pi^h\,[\gamma_\epsilon(\Psi^h) \,\kappa^h]\,\vec\nu^h, \vec U^h
\right\rangle_{\Gamma^h(t)} + \left\langle
\nabs\,\pi^h\,[\gamma_\epsilon(\Psi^h)] , \vec U^h \right\rangle_{\Gamma^h(t)}^h\,.
\end{align}
\end{lem}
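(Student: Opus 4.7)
The proof proposal follows exactly the same pattern as Lemma~\ref{lem:stabGD}: test (\ref{eq:sdHGa}) with $\vec\xi = \vec U^h$ and (\ref{eq:sdHGb}) with $\varphi = P^h$, and simplify.

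First I would treat the time-derivative block in (\ref{eq:sdHGa}). Setting $\vec\xi = \vec U^h$ and using the symmetry of the $L^2$ inner product, the terms $(\rho^h\,\vec U^h_t, \vec U^h)$ and $(\rho^h\,\vec U^h, \vec U^h_t)$ coincide and cancel, leaving
\[
\tfrac12\left[\ddt(\rho^h\,\vec U^h, \vec U^h) + (\rho^h\,\vec U^h_t, \vec U^h) - (\rho^h\,\vec U^h, \vec U^h_t)\right]
= \tfrac12\,\ddt\,\|[\rho^h]^\frac12\,\vec U^h\|_0^2,
\]
where the contribution of $\rho^h_t$ is absorbed so that the identity
$\tfrac12\,\ddt(\rho^h\,\vec U^h, \vec U^h) = \tfrac12\,(\rho^h_t\,\vec U^h, \vec U^h) + (\rho^h\,\vec U^h_t, \vec U^h)$ is matched. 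The skew-symmetric trilinear form disappears on setting $\vec\xi = \vec U^h$, since $[(\vec I^h_2\,\vec U^h\,.\,\nabla)\,\vec U^h]\,.\,\vec U^h - [(\vec I^h_2\,\vec U^h\,.\,\nabla)\,\vec U^h]\,.\,\vec U^h = 0$ pointwise.

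Next I would combine with (\ref{eq:sdHGb}). The viscous term becomes $2\,\|[\mu^h]^\frac12\,\mat D(\vec U^h)\|_0^2$. Choosing $\varphi = P^h \in \widehat\pspace^h(t)$ in (\ref{eq:sdHGb}) shows $(\nabla\,.\,\vec U^h, P^h) = 0$, so the pressure term $-(P^h, \nabla\,.\,\vec U^h)$ vanishes. The remaining right-hand side of (\ref{eq:sdHGa}) with $\vec\xi = \vec U^h$ is exactly the right-hand side of (\ref{eq:lemHG}).

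I do not expect any genuine obstacle here; the argument is purely algebraic, and only uses (\ref{eq:sdHGa},b). Note in particular that (\ref{eq:sdHGc},d,e) play no role, which is consistent with the identity (\ref{eq:d1}) at the continuous level being a direct consequence of the momentum equation and incompressibility alone. The only mild point to verify is the admissibility of $\vec\xi = \vec U^h$ as a test function in $H^1(0,T;\uspace^h)$, which follows from the assumed regularity $\vec U^h \in H^1(0,T;\uspace^h)$.
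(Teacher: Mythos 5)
Your proof is correct and is exactly the paper's argument: the identity (\ref{eq:lemHG}) follows immediately on choosing $\vec\xi = \vec U^h$ in (\ref{eq:sdHGa}) and $\varphi = P^h$ in (\ref{eq:sdHGb}). The additional details you supply --- the cancellation of the antisymmetric pair $(\rho^h\,\vec U^h_t,\vec U^h) - (\rho^h\,\vec U^h,\vec U^h_t)$, the vanishing of the skew-symmetrized trilinear form, and the admissibility of $\vec U^h$ as a test function --- are all accurate.
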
 
\begin{proof}
The desired result (\ref{eq:lemHG}) follows immediately on choosing
$\vec \xi = \vec U^h$ in (\ref{eq:sdHGa}) and $\varphi = P^h$ in 
(\ref{eq:sdHGb}). 
\end{proof}

The next theorem derives a discrete analogue of the energy law (\ref{eq:d5}),
similarly to Theorem~\ref{thm:stabGD}, together with an exact volume
conservation property.

\begin{thm} \label{thm:stabHG}
Let $\{(\Gamma^h, \vec U^h, P^h, \kappa^h, \Psi^h)(t)\}_{t\in[0,T]}$ 
be a solution to {\rm (\ref{eq:sdHGa}--e)}. Then
\begin{equation} \label{eq:totalPsih}
\ddt \left\langle \Psi^h, 1 \right\rangle_{\Gamma^h(t)} = 0\,.
\end{equation}
Moreover, if $\charfcn{\Omega_-^h(t)} \in \pspace^h(t)$ then 
\begin{equation}
\ddt\, \vol(\Omega_-^h(t)) = 0\,. \label{eq:cons}
\end{equation}
In addition, if $d=2$ and if the assumption {\rm (\ref{eq:assPsih})} holds,
then
\begin{align}
& \ddt\left(\tfrac12\,\|[\rho^h]^\frac12\,\vec U^h\|^2_{0} + 
 \left\langle F_\epsilon(\Psi^h) , 1 \right\rangle_{\Gamma^h(t)}^h \right) 
+ 2\,\|[\mu^h]^\frac12\,\mat D(\vec U^h)\|^2_{0}
\leq \left(\rho^h\,\vec f^h_1 + \vec f^h_2, \vec U^h\right) \,.
\label{eq:stabHG}
\end{align}
\end{thm}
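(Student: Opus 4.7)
The three claims are handled separately, in order of difficulty. For conservation of surfactant (\ref{eq:totalPsih}) I take $\chi=1$ in (\ref{eq:sdHGe}): since $\matpartxh 1=0$ and $\nabs 1=\vec 0$ the right-hand side vanishes, and because the mass-lumped pairing against a constant agrees with the exact pairing, the surviving term reads $\ddt\langle\Psi^h,1\rangle_{\Gamma^h(t)}=0$. For volume conservation (\ref{eq:cons}) I take $\chi=1$ in (\ref{eq:sdHGc}); the left-hand side $\langle\vec{\mathcal{V}}^h,\vec\nu^h\rangle^h_{\Gamma^h(t)}$ coincides with $\int_{\Gamma^h(t)}\vec{\mathcal{V}}^h\,.\,\vec\nu^h\dH{d-1}=\ddt\vol(\Omega_-^h(t))$ (by the discrete Reynolds identity, using that $\vec\nu^h$ is piecewise constant per interfacial simplex), while the right-hand side equals $\int_{\Omega_-^h(t)}\nabla\,.\,\vec U^h\dL d$ by the divergence theorem. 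The latter vanishes upon testing (\ref{eq:sdHGb}) with $\varphi=\charfcn{\Omega_-^h(t)}-|\Omega|^{-1}\vol(\Omega_-^h(t))\in\widehat\pspace^h(t)$, noting that $\int_\Omega\nabla\,.\,\vec U^h\dL d=0$ by the boundary conditions built into $\uspace$.

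The energy inequality (\ref{eq:stabHG}) is the substantive part, and I mirror the continuous derivation (\ref{eq:d1})--(\ref{eq:d5}) at the discrete level with Lemma~\ref{lem:stabHG} playing the role of (\ref{eq:d1}). The goal is a discrete chain-rule identity for $\ddt\langle F_\epsilon(\Psi^h),1\rangle^h_{\Gamma^h(t)}$ whose right-hand side exactly matches, with opposite sign, the two interfacial forcing terms in (\ref{eq:lemHG}). To this end I test (\ref{eq:sdHGe}) with $\chi=\pi^h[F'_\epsilon(\Psi^h)]$, admissible by (\ref{eq:assPsih}) and $F_\epsilon\in C^2(-\infty,\psi_\infty)$. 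Two nodal identities drive the manipulation: by (\ref{eq:p1}) both $\matpartxh\Psi^h\cdot\pi^h[F'_\epsilon(\Psi^h)]$ and $\matpartxh\pi^h[F_\epsilon(\Psi^h)]$ take the value $F'_\epsilon(\Psi^h_k)\,\ddt\Psi^h_k$ at each vertex $\vec q^h_k$; and by (\ref{eq:Feps}), $\Psi^h_k\,F'_\epsilon(\Psi^h_k)=F_\epsilon(\Psi^h_k)-\gamma_\epsilon(\Psi^h_k)$. These, together with (\ref{eq:DElem5.6NI}) and a cancellation analogous to (\ref{eq:sp11}), reduce the tested surfactant equation to an identity whose right-hand side is $\langle\pi^h[\gamma_\epsilon(\Psi^h)],\nabs\,.\,\vec{\mathcal{V}}^h\rangle_{\Gamma^h(t)}-\langle\Psi^h_{\star,\epsilon}(\vec{\mathcal{V}}^h-\vec U^h),\nabs\pi^h[F'_\epsilon(\Psi^h)]\rangle^h_{\Gamma^h(t)}$.

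The cancellation then proceeds on each piece. On the first I invoke (\ref{eq:JWB}) with $\zeta=\pi^h[\gamma_\epsilon(\Psi^h)]$, $\vec\eta=\vec{\mathcal{V}}^h$, rewrite the resulting $\langle\nabs\vec\id,\nabs\vec\pi^h[\gamma_\epsilon(\Psi^h)\vec{\mathcal{V}}^h]\rangle$ as $-\langle\kappa^h\vec\nu^h,\vec\pi^h[\gamma_\epsilon(\Psi^h)\vec{\mathcal{V}}^h]\rangle^h$ via (\ref{eq:sdHGd}), and finally trade $\vec{\mathcal{V}}^h$ for $\vec U^h$ by (\ref{eq:sdHGc}) with the scalar $\pi^h[\gamma_\epsilon(\Psi^h)\kappa^h]\in\Wht$. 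On the second I apply the identity (\ref{eq:doctored}) with $\vec\eta=\vec{\mathcal{V}}^h-\vec\pi^h\vec U^h\in\Vht$, the substitution $\vec U^h\leftrightarrow\vec\pi^h\vec U^h$ being legitimate by vertex agreement under mass lumping. Summing and noting that $\langle\nabs\pi^h[\gamma_\epsilon(\Psi^h)],\vec\pi^h\vec U^h\rangle=\langle\nabs\pi^h[\gamma_\epsilon(\Psi^h)],\vec U^h\rangle^h$ (piecewise constant on segments against piecewise linear), the $\vec{\mathcal{V}}^h$ contributions cancel and the right-hand side collapses to $-\langle\pi^h[\gamma_\epsilon(\Psi^h)\kappa^h]\vec\nu^h,\vec U^h\rangle_{\Gamma^h(t)}-\langle\nabs\pi^h[\gamma_\epsilon(\Psi^h)],\vec U^h\rangle^h_{\Gamma^h(t)}$, as desired. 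Adding this to (\ref{eq:lemHG}) and discarding the nonnegative diffusion term $\Ds\langle\nabs\Psi^h,\nabs\pi^h[F'_\epsilon(\Psi^h)]\rangle$ --- nonnegative by (\ref{eq:LG}) applied to the monotonic Lipschitz $G=F'_\epsilon$, with (\ref{eq:chij}) holding automatically in $d=2$ --- yields (\ref{eq:stabHG}). The principal obstacle is precisely this cancellation: the two interfacial force contributions in (\ref{eq:lemHG}) appear in different flavours of inner product (one lumped, one exact), and reproducing each with the correct sign demands careful tracking of when piecewise constant and piecewise linear combinations render lumping exact. The definition (\ref{eq:Psihstar}) of $\Psi^h_{\star,\epsilon}$ and the placement of $\pi^h$ and $\vec\pi^h$ in the scheme are tailored precisely to make this possible, and it is through these technicalities that the $d=2$ restriction enters.
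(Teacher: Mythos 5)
Your proof is correct and follows the paper's own argument in all essentials: $\chi=1$ in (\ref{eq:sdHGe}) for (\ref{eq:totalPsih}); $\chi=1$ in (\ref{eq:sdHGc}) together with $\varphi=\charfcn{\Omega_-^h(t)}-\frac{\mathcal{L}^d(\Omega_-^h(t))}{\mathcal{L}^d(\Omega)}$ in (\ref{eq:sdHGb}) for (\ref{eq:cons}); and $\chi=\pi^h[F'_\epsilon(\Psi^h)]$ in (\ref{eq:sdHGe}) combined with (\ref{eq:Feps}), (\ref{eq:DElem5.6NI}), (\ref{eq:JWB}), (\ref{eq:doctored}), (\ref{eq:sdHGc},d) and (\ref{eq:LG}) for (\ref{eq:stabHG}), exactly as in the paper's chain (\ref{eq:ps1}). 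The bookkeeping of lumped versus exact inner products that you flag as the main technical point is indeed where the paper's care lies, and your handling of it is accurate.
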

\begin{proof}
The conservation property (\ref{eq:totalPsih}) follows immediately from
choosing $\chi = 1$ in (\ref{eq:sdHGe}). Moreover, 
choosing $\chi = 1$ in (\ref{eq:sdHGc}) and
$\varphi= (\charfcn{\Omega_-^h(t)} -
\frac{\mathcal{L}^d(\Omega_-^h(t))}{\mathcal{L}^d(\Omega)})
\in \widehat\pspace^h(t)$ in (\ref{eq:sdHGb}), we obtain that
\begin{equation*}
\frac{\rm d}{{\rm d}t} \vol(\Omega_-^h(t)) = 
\left\langle \vec{\mathcal{V}}^h , \vec\nu^h \right\rangle_{\Gamma^h(t)}
= \left\langle \vec{\mathcal{V}}^h , \vec\nu^h \right\rangle^h_{\Gamma^h(t)}
= \left\langle \vec U^h, \vec\nu^h \right\rangle_{\Gamma^h(t)}
= \int_{\Omega_-^h(t)} \nabla\,.\,\vec U^h \dL{d} 
=0\,, 
\end{equation*}
which proves the desired result (\ref{eq:cons}). 
For the remainder of the proof we assume that $d=2$.

The assumption (\ref{eq:assPsih}) means that we can
choose $\chi = \pi^h\,[F'_\epsilon(\Psi^h)]$ in (\ref{eq:sdHGe}) to yield,
similarly to (\ref{eq:sp1})--(\ref{eq:sp2}), with $\alpha=0$ and $F$ replaced
by $F_\epsilon$, 
on recalling (\ref{eq:Feps}), (\ref{eq:DElem5.6NI}), (\ref{eq:JWB}), 
(\ref{eq:doctored}) and (\ref{eq:sdHGc},d), that
\begin{align}
& \ddt\, \left\langle F_\epsilon(\Psi^h), 1 \right\rangle_{\Gamma^h(t)}^h +
\Ds \left\langle \nabs\, \Psi^h, \nabs\, \pi^h\,[F'_\epsilon(\Psi^h)]
\right\rangle_{\Gamma^h(t)} 
\nonumber \\ & \qquad\qquad
= 
\left\langle \nabs\,\vec \id, \nabs\,\pi^h\,[\gamma_\epsilon(\Psi^h)\,\vec{\mathcal{V}}^h]
  \right\rangle_{\Gamma^h(t)}
- \left\langle \nabs\,\pi^h\,[\gamma_\epsilon(\Psi^h)], 
 \vec{\mathcal{V}}^h \right\rangle_{\Gamma^h(t)}
\nonumber \\ & \qquad\qquad\qquad
+ \left\langle \vec{\mathcal{V}}^h - \vec\pi^h\,\vec U^h, 
\nabs\,\pi^h\,[\gamma_\epsilon(\Psi^h)] \right\rangle_{\Gamma^h(t)}
\nonumber \\ & \qquad\qquad
= - \left\langle \kappa^h\,\vec\nu^h, \gamma_\epsilon(\Psi^h)\,\vec{\mathcal{V}}^h
  \right\rangle_{\Gamma^h(t)}^h
- \left\langle \vec U^h, \nabs\,\pi^h\,[\gamma_\epsilon(\Psi^h)] \right\rangle_{\Gamma^h(t)}^h 
\nonumber \\ & \qquad\qquad
= - \left\langle \pi^h\,[\gamma_\epsilon(\Psi^h)\,\kappa^h]\,\vec\nu^h, \vec U^h
  \right\rangle_{\Gamma^h(t)}
- \left\langle \nabs\,\pi^h\,[\gamma_\epsilon(\Psi^h)], \vec U^h \right\rangle_{\Gamma^h(t)}^h 
\,.
\label{eq:ps1}
\end{align}
Since $d=2$ we can apply (\ref{eq:LG}) to the function $G = F'_\epsilon$,
where we recall (\ref{eq:assPsih}) and
that $F_\epsilon\in C^2(-\infty,\psi_\infty)$ is convex, and
obtain that the second term on the left hand side of (\ref{eq:ps1}) is 
nonnegative. Hence
the desired result (\ref{eq:stabHG}) follows from combining (\ref{eq:ps1}) with
(\ref{eq:lemHG}).
\end{proof}

Clearly, (\ref{eq:totalPsih}), (\ref{eq:cons}) and
(\ref{eq:stabHG}) are natural discrete analogues of
(\ref{eq:totalpsi}), (\ref{eq:conserved}) and (\ref{eq:d5}), respectively.
We remark that the condition $\charfcn{\Omega_-^h(t)} \in \pspace^h(t)$ is
always satisfied for the \XFEMGAMMA\ approach as introduced in
\cite{spurious,fluidfbp}. 

In addition, it is possible to prove that the vertices of the solution 
$\Gamma^h(t)$ to (\ref{eq:sdHGa}--e) are
well distributed. As this follows already from the equations 
{\rm (\ref{eq:sdHGd})}, we
refer to our earlier work in \cite{triplej,gflows3d} for further details. In
particular, we observe that in the case $d=2$, i.e.\ for the planar two-phase
problem, an equidistribution property for the vertices of $\Gamma^h(t)$ can be
shown. These good mesh properties mean that for fully discrete schemes based on
(\ref{eq:sdHGa}--e) no remeshings are required in practice for either $d=2$ or
$d=3$.

We remark that for the scheme (\ref{eq:sdGDa}--e) it is not possible to prove
(\ref{eq:cons}), even if mass lumping was to be dropped from the right hand
side of (\ref{eq:sdGDc}),
because $\vec\chi = \vec\nu^h$ is not a valid test function in
(\ref{eq:sdGDc}). As a consequence, the volume of the two phases will
in general not be conserved in practice.
This is an additional advantage of the formulation
(\ref{eq:sdHGa}--e) over (\ref{eq:sdGDa}--e).
A disadvantage is the fact that it does not appear possible to
derive a discrete maximum principle similarly to (\ref{eq:sddmp}). However, the
following remark demonstrates that also for the scheme (\ref{eq:sdHGa}--e) 
the negative part of $\Psi^h$ can be controlled.
Moreover, in practice we observe that for a fully discrete variant of
(\ref{eq:sdHGa}--e) the fully discrete analogues of $\Psi^h(\cdot,t)$ remain
positive for positive initial data.

\begin{rem} \label{rem:Psi-}
The convex nature of $F$, together with the fact that $F'$ is 
singular at the origin, allows us to derive upper bounds on the negative part
of $\Psi^h$ for the two cases {\rm (\ref{eq:gamma1},b)}. 
On recalling {\rm (\ref{eq:Freg})} and {\rm (\ref{eq:F})}, it holds that 
\begin{equation*}
F_\epsilon(r) 
= \gamma(\epsilon) + F'(\epsilon)\,r + \tfrac12\,F''(\epsilon)\,(r-\epsilon)^2
\geq \tfrac12\,F''(\epsilon)\,r^2 \geq 
\tfrac12\,\epsilon^{-1}\,\gamma_0\,\beta\,r^2
\qquad \forall\ r \leq 0\,,
\end{equation*}
provided that $\epsilon$ is sufficiently small. 
Hence the bound {\rm (\ref{eq:stabHG})}, via a Korn's inequality, 
implies that
\begin{equation*} 
\left\langle [\Psi^h]_-^2, 1 \right\rangle_{\Gamma^h(t)}^h \leq C\,\epsilon
\qquad\forall\ t \in (0,T]
\qquad \text{if}\quad \Psi^h(\cdot,0) \geq 0\,,
\end{equation*}
for some positive constant $C$, and for $\epsilon$ sufficiently small.
\end{rem}

We recall that the stability proofs in Theorems~\ref{thm:stabGD} and
\ref{thm:stabHG} are restricted to the case $d=2$. However, it is possible to
prove stability for $d = 2$ and $d=3$ for a variant of (\ref{eq:sdGDa}--e),
which, on recalling (\ref{eq:newGD}), is given by
\begin{align}
&
\tfrac12 \left[ \ddt \left( \rho^h\,\vec U^h , \vec \xi \right)
+ \left( \rho^h\,\vec U^h_t , \vec \xi \right)
- (\rho^h\,\vec U^h, \vec \xi_t) \right]
 \nonumber \\ & \quad
+ 2\left(\mu^h\,\mat D(\vec U^h), \mat D(\vec \xi) \right)
+ \tfrac12\left(\rho^h, 
 [(\vec I^h_2\,\vec U^h\,.\,\nabla)\,\vec U^h]\,.\,\vec \xi
- [(\vec I^h_2\,\vec U^h\,.\,\nabla)\,\vec \xi]\,.\,\vec U^h \right)
\nonumber \\ & \quad
- \left(P^h, \nabla\,.\,\vec \xi\right)
= \left(\rho^h\,\vec f^h_1 + \vec f^h_2, \vec \xi\right)
 - \left\langle \gamma(\Psi^h), 
 \nabs\,.\,\vec\pi^h\,\vec\xi\right\rangle_{\Gamma^h(t)}^h
\qquad \forall\ \vec\xi \in H^1(0,T;\uspace^h) \,, \label{eq:sdGDa2}
\end{align}
together with (\ref{eq:sdGDb},c,e). Here we observe that in this new
discretization it is no longer necessary to compute the discrete curvature
vector $\vec\kappa^h$. It is then not difficult to prove the following theorem.

\begin{thm} \label{thm:stabGD2}
Let $\{(\Gamma^h, \vec U^h, P^h, \Psi^h)(t)\}_{t\in[0,T]}$ 
be a solution to {\rm (\ref{eq:sdGDa2})}, {\rm (\ref{eq:sdGDb},c,e)}. Then
{\rm (\ref{eq:totalpsih})} and
\begin{equation} \label{eq:lemGD2}
 \tfrac12\,\ddt \,\|[\rho^h]^\frac12\,\vec U^h \|_0^2 +
2\,\| [\mu^h]^\frac12\,\mat D(\vec U^h)\|_0^2 
= (\rho^h\,\vec f_1^h + 
\vec f_2^h, \vec U^h)
- \left\langle \gamma(\Psi^h) , \nabs\,.\,\vec\pi^h\,\vec U^h 
\right\rangle_{\Gamma^h(t)}^h
\end{equation}
hold. 
In addition, if $\Ds=0$ or if {\rm (\ref{eq:chij})} and
{\rm (\ref{eq:Xinfty})} hold, then we have {\rm (\ref{eq:sddmp})}.
Moreover, if {\rm (\ref{eq:assPsih})} and {\rm (\ref{eq:sddmp})} hold, and
$\Ds=0$ or {\rm (\ref{eq:chij})} holds, then
\begin{equation} 
 \ddt\left(\tfrac12\,\|[\rho^h]^\frac12\,\vec U^h\|^2_{0} + 
 \left\langle F(\Psi^h) , 1 \right\rangle_{\Gamma^h(t)}^h \right) 
+ 2\,\|[\mu^h]^\frac12\,\mat D(\vec U^h)\|^2_{0}
\leq \left(\rho^h\,\vec f^h_1 + \vec f^h_2, \vec U^h\right) \,.
\label{eq:stabGD2}
\end{equation}
\end{thm}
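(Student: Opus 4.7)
The plan is to mirror the proof of Theorem~\ref{thm:stabGD}, exploiting the fact that the surfactant equation (\ref{eq:sdGDe}) is identical in both schemes, while the modified momentum balance (\ref{eq:sdGDa2}) already carries the surface tension contribution in the lumped form $-\langle\gamma(\Psi^h),\nabs\,.\,\vec\pi^h\,\vec\xi\rangle_{\Gamma^h(t)}^h$. Because of this, the identity (\ref{eq:JWB}) that restricted the stability proof in Theorem~\ref{thm:stabGD} to $d=2$ is no longer required at the crucial cancellation step, and the estimate will follow for any $d\in\{2,3\}$ under the announced hypotheses.

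The conservation property (\ref{eq:totalpsih}) is immediate on choosing $\chi=1$ in (\ref{eq:sdGDe}), recalling $\matpartxh\,1=0$. For the discrete energy identity (\ref{eq:lemGD2}) I would test (\ref{eq:sdGDa2}) with $\vec\xi=\vec U^h$ and (\ref{eq:sdGDb}) with $\varphi=P^h$: the antisymmetric trilinear form vanishes, the pressure--divergence pair cancels, and the symmetric time-derivative block collapses to $\tfrac12\ddt\|[\rho^h]^{\frac12}\vec U^h\|_0^2$ exactly as in Lemma~\ref{lem:stabGD}. The discrete maximum principle (\ref{eq:sddmp}) then transfers verbatim from Theorem~\ref{thm:stabGD}: if $\Ds=0$, testing with $\chi=\chi_k^h$ and using (\ref{eq:mpbf}) shows that the product of the lumped nodal mass with $\Psi^h_k(t)$ is conserved at each vertex, so sign propagates; if $\Ds>0$, testing with $\chi=\pi^h\,[\Psi^h]_-$ and invoking (\ref{eq:LG}) for $G=[\cdot]_-$, together with (\ref{eq:DElem5.6NI}) and a Gronwall argument based on (\ref{eq:Xinfty}), yields the claim.

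The main work is the energy bound (\ref{eq:stabGD2}). Following (\ref{eq:sp1})--(\ref{eq:sp2}) in the earlier proof, I would test (\ref{eq:sdGDe}) with $\chi=\pi^h\,[F'(\Psi^h+\alpha)]$ for a small $\alpha>0$, needed because $F'$ may be singular at the origin. Using (\ref{eq:F}) nodewise to rewrite $\Psi^h\,F'(\Psi^h+\alpha)$ as $F(\Psi^h+\alpha)-\gamma(\Psi^h+\alpha)-\alpha\,F'(\Psi^h+\alpha)$, invoking the nodewise identity (\ref{eq:sp11})---which depends only on $(\Psi^h_k+\alpha)\,F''(\Psi^h_k+\alpha)=-\gamma'(\Psi^h_k+\alpha)$ from (\ref{eq:Fdd}) and therefore holds in any dimension---and applying (\ref{eq:DElem5.6NI}) to $\pi^h\,[\gamma(\Psi^h+\alpha)]$ paired with $1$, one arrives at
\begin{equation*}
\ddt\langle F(\Psi^h+\alpha),1\rangle_{\Gamma^h(t)}^h
+ \Ds\,\langle\nabs\Psi^h,\nabs\pi^h\,[F'(\Psi^h+\alpha)]\rangle_{\Gamma^h(t)}
= \langle \gamma(\Psi^h+\alpha), \nabs\,.\,\vec{\mathcal{V}}^h\rangle_{\Gamma^h(t)}^h
+ \alpha\,\langle F'(\Psi^h+\alpha), \nabs\,.\,\vec{\mathcal{V}}^h\rangle_{\Gamma^h(t)}^h.
\end{equation*}
Now (\ref{eq:sdGDc}) is equivalent to $\vec{\mathcal{V}}^h=\vec\pi^h\,\vec U^h\!\mid_{\Gamma^h(t)}$ in $\Vht$, so $\nabs\,.\,\vec{\mathcal{V}}^h=\nabs\,.\,\vec\pi^h\,\vec U^h$, and the first right-hand term is precisely the negative of the surface tension contribution in (\ref{eq:lemGD2}). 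Under either $\Ds=0$ or (\ref{eq:chij}), the diffusion term on the left is nonnegative by (\ref{eq:LG}) applied to the monotonic Lipschitz function $G=F'(\cdot+\alpha)$ on the range of $\Psi^h+\alpha\subset(0,\psi_\infty)$; dropping it, passing $\alpha\to 0$ using (\ref{eq:F0}) to annihilate the $\alpha$-correction at each node, and adding to (\ref{eq:lemGD2}) produces the exact cancellation of the $\langle\gamma(\Psi^h),\nabs\,.\,\vec\pi^h\,\vec U^h\rangle_{\Gamma^h(t)}^h$ contributions and delivers (\ref{eq:stabGD2}).

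The main delicacy is the nodewise telescoping that converts a test with $\pi^h\,[F'(\Psi^h+\alpha)]$ into a time derivative of the surface potential $F(\Psi^h+\alpha)$, which requires carefully exploiting the fact that mass lumping only sees nodal values together with (\ref{eq:F}), (\ref{eq:Fdd}) and (\ref{eq:DElem5.6NI}); the $\alpha\to 0$ limit must then use (\ref{eq:sddmp}) to keep $\Psi^h+\alpha>0$ and (\ref{eq:F0}) to control the $\alpha\,F'(\Psi^h+\alpha)$ term at nodes where $\Psi^h$ vanishes. The decisive simplification over Theorem~\ref{thm:stabGD} is that (\ref{eq:sdGDa2}) places the surface tension in the mass-lumped form that arises naturally from the surfactant energy balance, so neither the discrete curvature vector $\vec\kappa^h$ nor the $d=2$-only identity (\ref{eq:JWB}) needs to be invoked.
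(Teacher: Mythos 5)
Your proposal is correct and follows essentially the same route as the paper's proof: the same test functions $\chi=1$, $\vec\xi=\vec U^h$, $\varphi=P^h$ and $\chi=\pi^h[F'(\Psi^h+\alpha)]$, the reuse of (\ref{eq:sp11}), (\ref{eq:DElem5.6NI}) and (\ref{eq:LG}) with the $\alpha\to0$ limit via (\ref{eq:F0}), and the same decisive observation that the mass-lumped form $-\langle\gamma(\Psi^h),\nabs\,.\,\vec\pi^h\,\vec\xi\rangle^h_{\Gamma^h(t)}$ in (\ref{eq:sdGDa2}) makes the $d=2$-only identity (\ref{eq:JWB}) unnecessary, so the bound holds for $d=3$ as well.
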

\begin{proof}
The desired results (\ref{eq:totalpsih}) and 
(\ref{eq:lemGD2}) follow immediately on choosing
$\chi = 1$ in (\ref{eq:sdGDe}) and on choosing
$\vec \xi = \vec U^h$ in (\ref{eq:sdGDa2}) and $\varphi = P^h$ in 
(\ref{eq:sdGDb}), respectively. The nonnegativity result (\ref{eq:sddmp})
can be shown as in the proof of Theorem~\ref{thm:stabGD}.
The stability bound (\ref{eq:stabGD2}) follows as in the proof of
Theorem~\ref{thm:stabGD}, on combining the first equation in (\ref{eq:sp2})
with (\ref{eq:lemGD2}) and 
$\vec{\mathcal{V}}^h = \vec\pi^h\,\vec U^h\!\mid_{\Gamma^h(t)}$, 
and on recalling that (\ref{eq:sp3}) holds if
our assumptions are satisfied. We note that this proof is valid for $d=3$, as
we do not use (\ref{eq:JWB}).
\end{proof}

We recall that the assumption (\ref{eq:chij}) always holds for $d=2$, but
for $d=3$ it will in general only be satisfied if all the triangles 
$\sigma^h_j(t)$ of $\Gamma^h(t)$ have no obtuse angles. 
Unfortunately, in practice this will in general not be the case.
Finally, 
we remark that it does not seem possible to derive a stability result for the
scheme (\ref{eq:sdGDa2}), (\ref{eq:sdHGb}--e) in the case $d=2$ or $d=3$.

\begin{rem} \label{rem:Fconst}
We note that in the special case of constant surface tension, i.e.\ when
{\rm (\ref{eq:Fconst})} holds, then, similarly to {\rm (\ref{eq:d45})},
the stability results {\rm (\ref{eq:stabGD})}, {\rm (\ref{eq:stabHG})} 
and {\rm (\ref{eq:stabGD2})} remain valid and reduce to
\begin{align}
& \ddt\left(\tfrac12\,\|[\rho^h]^\frac12\,\vec U^h\|^2_{0} + 
 \gamma_0\,\mathcal{H}^{d-1}(\Gamma^h(t)) \right) 
+ 2\,\|[\mu^h]^\frac12\,\mat D(\vec U^h)\|^2_{0}
\leq \left(\rho^h\,\vec f^h_1 + \vec f^h_2, \vec U^h\right) \,,
\label{eq:stabFconst}
\end{align}
where we note that $F_\epsilon = F = \gamma_0$ in {\rm (\ref{eq:stabHG})}.
The bound {\rm (\ref{eq:stabFconst})} recovers the stability results for
the semidiscrete variants of the fully discrete schemes from \cite{fluidfbp}
for two-phase Navier--Stokes flow.
\end{rem}

\subsection{Fully discrete approximation} \label{sec:32}

In this section we consider fully discrete variants of the schemes
(\ref{eq:sdGDa}--e) and (\ref{eq:sdHGa}--e) from \S\ref{sec:31}. Here we will
choose the time discretization such that existence and uniqueness of the
discrete solutions can be guaranteed, and such that we inherit as much of the
structure of the stable schemes in \cite{spurious,fluidfbp} as possible, see
below for details.

We consider the partitioning $t_m = m\,\tau$, $m = 0,\ldots, M$, 
of $[0,T]$ into uniform time steps $\tau = T / M$.
The time discrete spatial discretizations then directly follow from the finite
element spaces introduced in \S\ref{sec:31}, where here in order to allow for
local mesh refinements 
we consider bulk finite element spaces that change in time.

For all $m\ge 0$, let $\mathcal{T}^m$ 
be a regular partitioning of $\Omega$ into disjoint open simplices
$\sigmaO^m_j$, $j = 1 ,\ldots, J_\Omega^m$. 
We set $h^m:= \max_{j=1,\ldots,J_\Omega^m}\mbox{diam}( \sigmaO^m_j)$.
Associated with ${\cal T}^m$ are the finite element spaces
$S^m_k$ for $k\geq 0$.
We introduce also $\vec I^m_k: [C(\overline{\Omega})]^d \to [S^m_k]^d$, 
$k\geq 1$, the standard interpolation operators, and the standard projection
operator $I^m_0:L^1(\Omega)\to S^m_0$.
For the approximation to the velocity and pressure on ${\cal T}^m$
will use the finite element spaces
$\uspace^m\subset\uspace$ and $\pspace^m\subset\pspace$, which are the direct
time discrete analogues of $\uspace^h$ and $\pspace^h(t_m)$,
as well as $\widehat\pspace^m \subset \widehat\pspace$.
We recall that $(\uspace^m, \pspace^m)$ are said to satisfy 
the LBB inf-sup condition if
there exists a constant $C_0 \in \R_{>0}$ independent of $h^m$ such that
\begin{equation} \label{eq:LBB}
\inf_{\varphi \in \widehat\pspace^m} \sup_{\vec \xi \in \uspace^m}
\frac{( \varphi, \nabla \,.\,\vec \xi)}
{\|\varphi\|_0\,\|\vec \xi\|_1} \geq C_0\,.
\end{equation}

Similarly, the parametric finite element spaces are given by
\begin{equation*} 
\Vh := \{\vec\chi \in [C(\Gamma^m)]^d:\vec\chi\!\mid_{\sigma^m_j}
\mbox{ is linear}\ \forall\ j=1,\ldots, J_\Gamma\} 
=: [\Wh]^d \,,
\end{equation*}
for $m=0 ,\ldots, M-1$. Here
$\Gamma^m=\bigcup_{j=1}^{J_\Gamma} 
\overline{\sigma^m_j}$,
where $\{\sigma^m_j\}_{j=1}^{J_\Gamma}$ is a family of mutually disjoint open 
$(d-1)$-simplices 
with vertices $\{\vec{q}^m_k\}_{k=1}^{K_\Gamma}$. 
We denote the standard basis of $\Wh$ by
$\{\chi^m_k(\cdot,t)\}_{k=1}^{K_\Gamma}$.
We also introduce 
$\pi^m: C(\Gamma^m)\to \Wh$, the standard interpolation operator
at the nodes $\{\vec{q}_k^m\}_{k=1}^{K_\Gamma}$,
and similarly $\vec\pi^m: [C(\Gamma^m)]^d \to \Vh$.
Throughout this paper, we will parameterize the new closed surface 
$\Gamma^{m+1}$ over $\Gamma^m$, with the help of a parameterization
$\vec{X}^{m+1} \in \Vh$, i.e.\ $\Gamma^{m+1} = \vec{X}^{m+1}(\Gamma^m)$.
Moreover, for $m \geq 0$, we will use the notation 
$\vec{X}^m = \vec{\rm id}\!\mid_{\Gamma^m} \in \Vh$. 

We also introduce the $L^2$--inner 
product $\langle\cdot,\cdot\rangle_{\Gamma^m}$ over
the current polyhedral surface $\Gamma^m$, as well as the 
the mass lumped inner product
$\langle\cdot,\cdot\rangle_{\Gamma^m}^h$.
Given $\Gamma^m$, we 
let $\Omega^m_+$ denote the exterior of $\Gamma^m$ and let
$\Omega^m_-$ denote the interior of $\Gamma^m$, so that
$\Gamma^m = \partial \Omega^m_- = \overline{\Omega^m_-} \cap 
\overline{\Omega^m_+}$. 
We then partition the elements of the bulk mesh 
$\mathcal{T}^m$ into interior, exterior and interfacial elements as before, and
we introduce  
$\rho^m,\,\mu^m \in S^m_0$, for $m\geq 0$, as 
\begin{equation} \label{eq:rhoma}
\rho^m\!\mid_{o^m} = \begin{cases}
\rho_- & o^m \in \mathcal{T}^m_-\,, \\
\rho_+ & o^m \in \mathcal{T}^m_+\,, \\
\tfrac12\,(\rho_- + \rho_+) & o^m \in \mathcal{T}^m_{\Gamma^m}\,,
\end{cases}
\quad\text{and}\quad
\mu^m\!\mid_{o^m} = \begin{cases}
\mu_- & o^m \in \mathcal{T}^m_-\,, \\
\mu_+ & o^m \in \mathcal{T}^m_+\,, \\
\tfrac12\,(\mu_- + \mu_+) & o^m \in \mathcal{T}^m_{\Gamma^m}\,.
\end{cases}
\end{equation}
We also set $\rho^{-1} := \rho^0$.

Our proposed fully discrete equivalent of (\ref{eq:sdGDa}--e) is then given as
follows.
Let $\Gamma^0$, an approximation to $\Gamma(0)$, 
and $\vec U^0\in \uspace^0$, as well as $\vec\kappa^0 \in
\underline{V}(\Gamma^0)$ and $\Psi^0 \in W(\Gamma^0)$ be given.
For $m=0,\ldots, M-1$, find $\vec U^{m+1} \in \uspace^m$, 
$P^{m+1} \in \widehat\pspace^m$, $\vec{X}^{m+1}\in\Vh$ and
$\vec\kappa^{m+1} \in \Vh$ such that
\begin{subequations}
\begin{align}
&
\tfrac12 \left( \frac{\rho^m\,\vec U^{m+1} - (I^m_0\,\rho^{m-1})
\,\vec I^m_2\,\vec U^m}{\tau}
+(I^m_0\,\rho^{m-1}) \,\frac{\vec U^{m+1}- \vec I^m_2\,\vec{U}^m}{\tau}, 
\vec \xi \right)
 \nonumber \\ & \qquad
+ 2\left(\mu^m\,\mat D(\vec U^{m+1}), \mat D(\vec \xi) \right)
+ \tfrac12\left(\rho^m, 
 [(\vec I^m_2\,\vec U^m\,.\,\nabla)\,\vec U^{m+1}]\,.\,\vec \xi
- [(\vec I^m_2\,\vec U^m\,.\,\nabla)\,\vec \xi]\,.\,\vec U^{m+1} \right)
\nonumber \\ & \qquad
- \left(P^{m+1}, \nabla\,.\,\vec \xi\right)
= \left(\rho^m\,\vec f^{m+1}_1 + \vec f^{m+1}_2, \vec \xi\right)
 + \left\langle \gamma(\Psi^m)\,\vec \kappa^m
 + \nabs\,\pi^m\,[\gamma(\Psi^m)], 
   \vec\xi\right\rangle_{\Gamma^m}^h
\nonumber \\ & \hspace{11cm}
\qquad \forall\ \vec\xi \in \uspace^m \,, \label{eq:GDa}\\
& \left(\nabla\,.\,\vec U^{m+1}, \varphi\right) = 0 
\qquad \forall\ \varphi \in \widehat\pspace^m\,,
\label{eq:GDb} \\
& \left\langle \frac{\vec X^{m+1} - \vec X^m}{\tau} ,
\vec\chi \right\rangle_{\Gamma^m}^h
= \left\langle \vec U^{m+1}, \vec\chi \right\rangle_{\Gamma^m}^h
 \qquad\forall\ \vec\chi \in \Vh\,,
\label{eq:GDc} \\
& \left\langle \vec\kappa^{m+1} , \vec\eta \right\rangle_{\Gamma^m}^h
+ \left\langle \nabs\,\vec X^{m+1}, \nabs\,\vec \eta \right\rangle_{\Gamma^m}
 = 0  \qquad\forall\ \vec\eta \in \Vh\label{eq:GDd} \\
\intertext{and set $\Gamma^{m+1} = \vec{X}^{m+1}(\Gamma^m)$. 
We note that in (\ref{eq:GDa}), as no confusion can arise, for $m\geq1$
we denote by $\vec\kappa^m$ the function $\vec z \in \Vh$, defined by 
$\vec z(\vec{q}^m_k) = \vec\kappa^m(\vec{q}^{m-1}_k)$, $k=1, \ldots, K_\Gamma$, 
where $\vec\kappa^m \in \underline{V}(\Gamma^{m-1})$ is given.
Then find $\Psi^{m+1} \in \Whp$ such that}
& \frac1{\tau}
\left\langle \Psi^{m+1}, \chi^{m+1}_k \right\rangle_{\Gamma^{m+1}}^h
+ \Ds\left\langle \nabs\, \Psi^{m+1}, \nabs\, \chi^{m+1}_k
\right\rangle_{\Gamma^{m+1}}
= \frac1{\tau}
\left\langle \Psi^{m}, \chi^{m}_k \right\rangle_{\Gamma^m}^h 
\nonumber \\ & \hspace{10cm}
\quad\forall\ k \in \{1,\ldots,K_\Gamma\}\,.
\label{eq:GDe}
\end{align}
\end{subequations}
Here we have defined $\vec f^{m+1}_i := \vec I^m_2\,\vec
f_i(\cdot,t_{m+1})$, $i=1,2$.
We observe that (\ref{eq:GDa}--e) is a linear scheme in that
it leads to a linear system of equations for the unknowns 
$(\vec U^{m+1}, P^{m+1}, \vec{X}^{m+1}, \vec\kappa^{m+1},$ $ \Psi^{m+1})$ 
at each time level. In particular, the system (\ref{eq:GDa}--e) clearly
decouples into (\ref{eq:GDa},b) for $(\vec U^{m+1}, P^{m+1})$, then
(\ref{eq:GDc},d) for $(\vec{X}^{m+1}, \vec\kappa^{m+1})$ and finally 
(\ref{eq:GDe}) for $\Psi^{m+1}$.

\begin{rem} \label{rem:newGD}
Of course, the natural analogue of {\rm (\ref{eq:GDa}--e)} that is based on the
semidiscrete scheme from {\rm Theorem~\ref{thm:stabGD2}}, is given by:
Find $\vec U^{m+1} \in \uspace^m$, 
$P^{m+1} \in \widehat\pspace^m$, $\vec{X}^{m+1}\in\Vh$
and $\Psi^{m+1} \in \Whp$ such that {\rm (\ref{eq:GDa}--c,e)} hold with
$\langle \gamma(\Psi^m)\,\vec \kappa^{m} + \nabs\,\pi^m\,[\gamma(\Psi^m)],$ $
\vec\xi\rangle_{\Gamma^m}^h$ in {\rm (\ref{eq:GDa})} replaced by
$-\langle \gamma(\Psi^m), \nabs\,.\,\vec\pi^m\,\vec \xi \rangle_{\Gamma^m}^h$.
\end{rem}

When the velocity/pressure space pair $(\uspace^m,\widehat\pspace^m)$ does not
satisfy (\ref{eq:LBB}), we need to consider the following reduced version of
(\ref{eq:GDa},b), where the pressure $P^{m+1}$ is eliminated, 
in order to prove existence of a solution.
Let 
$$\uspace^m_0 := 
\{ \vec U \in \uspace^m : (\nabla\,.\,\vec U, \varphi) = 0 \ \
\forall\ \varphi \in \widehat\pspace^m \} \,.$$ 
Then any solution $(\vec U^{m+1}, P^{m+1}) \in 
\uspace^m\times\widehat\pspace^m$ to {\rm (\ref{eq:GDa},b)}
is such that $\vec U^{m+1} \in \uspace^m_0$ satisfies
(\ref{eq:GDa}) with $\uspace^m$ replaced by
$\uspace^m_0$.
In addition, 
we make the following very mild well-posedness assumption.

\begin{itemize}
\item[$(\mathcal{A})$]
We assume for $m=0,\ldots, M-1$ that $\mathcal{H}^{d-1}(\sigma^m_j) > 0$ 
for all $j=1,\ldots, J_\Gamma$,
and that $\Gamma^m \subset \Omega$.
\end{itemize}

Moreover, and similarly to (\ref{eq:chij}), we note that the assumption
\begin{equation} \label{eq:chijm}
\int_{\sigma^{m+1}_j} \nabs \chi^{m+1}_i \,.\,\nabs \chi^{m+1}_k 
\dH{d-1} \leq 0 
\quad \forall\ i \neq k\,,\qquad j = 1,\ldots,J_\Gamma
\end{equation}
is always satisfied for $d=2$, and for $d=3$ if all the triangles 
$\sigma^{m+1}$ of $\Gamma^{m+1}$ have no obtuse angles.

\begin{thm} \label{thm:GD}
Let the assumption $(\mathcal{A})$ hold.
If the LBB condition {\rm (\ref{eq:LBB})} holds, then there exists a unique
solution $(\vec U^{m+1}, P^{m+1}) \in \uspace^m\times\widehat\pspace^m$ 
to {\rm (\ref{eq:GDa},b)}. In all other
cases there exists a unique solution $\vec U^{m+1} \in \uspace^m_0$ to the
reduced equation {\rm (\ref{eq:GDa})} with $\uspace^m$ replaced by
$\uspace^m_0$.
In either case, there exists a unique solution
$(\vec{X}^{m+1}, \vec\kappa^{m+1}) \in \Vh \times \Vh$ to 
{\rm (\ref{eq:GDc},d)}.
Finally, there exists a unique solution $\Psi^{m+1}\in\Whp$ to 
{\rm (\ref{eq:GDe})} that satisfies 
\begin{subequations}
\begin{equation} \label{eq:consm}
\left\langle \Psi^{m+1}, 1 \right\rangle_{\Gamma^{m+1}} =
\left\langle \Psi^{m}, 1 \right\rangle_{\Gamma^{m}}
\end{equation}
and, if $\Ds=0$ or if the assumption {\rm (\ref{eq:chijm})} holds,
\begin{equation} \label{eq:dmp}
\Psi^{m+1} \geq 0 \qquad \text{if}\quad \Psi^m \geq 0\,.
\end{equation}
\end{subequations}
\end{thm}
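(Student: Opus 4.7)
The scheme (\ref{eq:GDa}--e) has a triangular structure and may be solved block by block: first the Navier--Stokes pair (\ref{eq:GDa},b) for $(\vec U^{m+1}, P^{m+1})$; then (\ref{eq:GDc}) for $\vec X^{m+1}$; then (\ref{eq:GDd}) for $\vec\kappa^{m+1}$; and finally (\ref{eq:GDe}), posed on the already-computed surface $\Gamma^{m+1}$, for $\Psi^{m+1}$. Since each block is a square linear system in a finite-dimensional space, existence will follow from uniqueness.

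For the Navier--Stokes block, let $(\widetilde{\vec U}, \widetilde P) \in \uspace^m \times \widehat\pspace^m$ denote the difference of two solutions. The convective form in (\ref{eq:GDa}) is antisymmetric in its arguments $(\vec U^{m+1}, \vec\xi)$, so testing the difference of (\ref{eq:GDa}) with $\vec\xi = \widetilde{\vec U}$ kills that form; and since $\widetilde{\vec U}$ is discretely divergence-free by (\ref{eq:GDb}), the pressure--divergence contribution $(\widetilde P, \nabla\,.\,\widetilde{\vec U})$ also vanishes. What remains is
\begin{equation*}
\tfrac{1}{2\tau}((\rho^m + I^m_0\,\rho^{m-1})\,\widetilde{\vec U}, \widetilde{\vec U}) + 2(\mu^m\,\mat D(\widetilde{\vec U}), \mat D(\widetilde{\vec U})) = 0\,.
\end{equation*}
By (\ref{eq:rhoma}) both $\rho^m$ and $I^m_0\rho^{m-1}$ are bounded below pointwise by $\min(\rho_-,\rho_+) > 0$, so the mass term alone forces $\widetilde{\vec U} = \vec 0$. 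Substituting back into (\ref{eq:GDa}) then leaves $(\widetilde P, \nabla\,.\,\vec\xi) = 0$ for all $\vec\xi \in \uspace^m$, and the LBB condition (\ref{eq:LBB}) yields $\widetilde P = 0$. When LBB fails, the same argument applied in $\uspace^m_0$ produces uniqueness of $\vec U^{m+1}$, with $P^{m+1}$ not sought.

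The interface blocks are immediate: assumption $(\mathcal{A})$ guarantees that $\langle\cdot,\cdot\rangle^h_{\Gamma^m}$ is a positive-definite inner product on $\Vh$, so (\ref{eq:GDc}) is a diagonal positive-definite mass system that uniquely determines $\vec X^{m+1}$ from the known $\vec U^{m+1}$, and (\ref{eq:GDd}) similarly determines $\vec\kappa^{m+1}$ from the now-known $\vec X^{m+1}$.

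For (\ref{eq:GDe}), writing $\Psi^{m+1} = \sum_k \Psi^{m+1}_k\,\chi^{m+1}_k$ with $\Psi^{m+1}_k := \Psi^{m+1}(\vec q^{m+1}_k)$ converts the equation into a linear system with matrix $A = \tfrac{1}{\tau}\,M^{m+1} + \Ds\,S^{m+1}$, where $M^{m+1}$ is the diagonal mass-lumped mass matrix (positive entries $\omega^{m+1}_k := \langle \chi^{m+1}_k, 1\rangle_{\Gamma^{m+1}}$) and $S^{m+1}$ is the symmetric positive-semidefinite surface stiffness matrix; hence $A$ is symmetric positive definite and $\Psi^{m+1}$ exists uniquely. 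The conservation (\ref{eq:consm}) follows by summing (\ref{eq:GDe}) over $k$, i.e.\ testing with $\chi \equiv \sum_k \chi^{m+1}_k$: the diffusive term vanishes, and since mass lumping is exact on the pairing of a piecewise linear function with the constant $1$ one obtains $\langle\Psi^{m+1},1\rangle_{\Gamma^{m+1}} = \langle\Psi^{m+1},1\rangle^h_{\Gamma^{m+1}} = \langle\Psi^m,1\rangle^h_{\Gamma^m} = \langle\Psi^m,1\rangle_{\Gamma^m}$. The point requiring the most care is the nonnegativity (\ref{eq:dmp}): if $\Ds = 0$ the system is diagonal and reduces nodewise to $\omega^{m+1}_k\,\Psi^{m+1}_k = \omega^m_k\,\Psi^m_k$, which preserves sign; if $\Ds > 0$, assumption (\ref{eq:chijm}) forces the off-diagonal entries of $S^{m+1}$ to be nonpositive, and since $\sum_l \chi^{m+1}_l \equiv 1$ the row sums of $S^{m+1}$ vanish, so that $A$ has positive diagonal, nonpositive off-diagonal, and strictly positive row sums inherited from $\tfrac{1}{\tau}\,M^{m+1}$. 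Thus $A$ is an M-matrix with $A^{-1}$ componentwise nonnegative, and the right-hand side vector with entries $\tfrac{1}{\tau}\,\omega^m_k\,\Psi^m_k$ is componentwise nonnegative whenever $\Psi^m \geq 0$, yielding $\Psi^{m+1} \geq 0$.
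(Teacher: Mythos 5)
Your proof is correct, and for everything up to the last claim it follows essentially the same route as the paper: exploit the block-triangular decoupling, reduce existence to uniqueness of each finite-dimensional linear block, kill the antisymmetric convective form and the pressure term by testing with the velocity difference, use positivity of the (lumped) mass matrices under assumption $(\mathcal{A})$ for the $(\vec X^{m+1},\vec\kappa^{m+1})$ block, recognise (\ref{eq:GDe}) as a symmetric positive definite system, and obtain (\ref{eq:consm}) by summing over $k$ (your explicit remark that mass lumping is exact against the constant $1$ is a detail the paper leaves implicit, and it is needed to pass from the lumped to the unlumped inner products in (\ref{eq:consm})).

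The one place where you genuinely diverge is the nonnegativity (\ref{eq:dmp}). The paper argues variationally: it multiplies the $k$-th equation of (\ref{eq:GDe}) by $[\Psi^{m+1}_k]_-$ and sums, uses $\Psi^m\geq0$ to discard the right-hand side, and invokes the analogue of (\ref{eq:LG}) for $G=[\cdot]_-$ under (\ref{eq:chijm}) to conclude $\langle [\Psi^{m+1}]_-,[\Psi^{m+1}]_-\rangle^h_{\Gamma^{m+1}}\leq 0$. You instead observe that under (\ref{eq:chijm}) the stiffness matrix has nonpositive off-diagonal entries and, since $\sum_l\chi^{m+1}_l\equiv1$, zero row sums, so that $A=\tfrac1\tau M^{m+1}+\Ds\,S^{m+1}$ is a strictly diagonally dominant M-matrix with $A^{-1}\geq0$ entrywise, and the nonnegative right-hand side $\tfrac1\tau\,\omega^m_k\,\Psi^m_k$ then yields $\Psi^{m+1}\geq0$. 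Both arguments hinge on exactly the same sign condition (\ref{eq:chijm}); the M-matrix route is the classical algebraic form of the discrete maximum principle, while the paper's test-function route is the one that generalises to the nonlinear energy estimates elsewhere in Section 3 (it is the same mechanism as (\ref{eq:LG}) used in Theorem \ref{thm:stabGD}). Either is acceptable here.
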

\begin{proof}
As all the systems are linear, existence follows from uniqueness.
In order to establish the latter, we will consider the homogeneous 
system in each case. We begin with:
Find $(\vec U, P) \in \uspace^m\times\widehat\pspace^m$ such that
\begin{subequations}
\begin{align}
&
\tfrac1{2\,\tau} \left( (\rho^m+I^m_0\,\rho^{m-1})\,\vec U, \vec \xi \right)
+ 2\left(\mu^m\,\mat D(\vec U), \mat D(\vec \xi) \right)
- \left(P, \nabla\,.\,\vec \xi\right)
\nonumber \\ & \qquad
+ \tfrac12\left(\rho^m, [(\vec I^m_2\,\vec U^m\,.\,\nabla)\,\vec U]\,.\,\vec \xi
- [(\vec I^m_2\,\vec U^m\,.\,\nabla)\,\vec \xi]\,.\,\vec U \right)
= 0 
 \qquad \forall\ \vec\xi \in \uspace^m \,, \label{eq:proofa}\\
& \left(\nabla\,.\,\vec U, \varphi\right)  = 0 
\qquad \forall\ \varphi \in \widehat\pspace^m\,.
\label{eq:proofb} 
\end{align}
\end{subequations}
Choosing $\vec\xi=\vec U$ in (\ref{eq:proofa}) and
$\varphi =  P$ in (\ref{eq:proofb})
yields that
\begin{align}
& \tfrac12\left((\rho^m + I^m_0\,\rho^{m-1})\,\vec U, \vec U \right) + 
2\,\tau\left(\mu^m\,\mat D(\vec U), \mat D(\vec U) \right)
=0\,. \label{eq:proof2GD}
\end{align}
It immediately follows from (\ref{eq:proof2GD}), on recalling
$\rho_\pm > 0$, 
that $\vec U = \vec 0 \in \uspace^m$.
Moreover, (\ref{eq:proofa}) with $\vec U = \vec 0$ implies,
together with (\ref{eq:LBB}), that $P = 0 \in \widehat\pspace^m$. This shows
existence and uniqueness of 
$(\vec U^{m+1}, P^{m+1}) \in \uspace^m\times\widehat\pspace^m$.
The proof for the reduced equation is very similar. The homogeneous system to
consider is (\ref{eq:proofa}) with $\uspace^m$ replaced by
$\uspace^m_0$, where we note that the latter is a linear subspace of
$\uspace^m$. As before, (\ref{eq:proof2GD}) 
yields that $\vec U = \vec 0 \in \uspace^m_0$, and so the existence of a unique
solution $\vec U^{m+1} \in \uspace^m_0$ to the reduced equation.

Next we consider: Find $(\vec X, \vec\kappa) \in \Vh \times\Vh$ such that
\begin{align*}
&  \left\langle \vec X, \vec\chi \right\rangle_{\Gamma^m}^h
 = 0
 \qquad\forall\ \vec\chi \in \Vh\,, 
\\
& \left\langle \vec\kappa, \vec\eta \right\rangle_{\Gamma^m}^h
+ \left\langle \nabs\,\vec X, \nabs\,\vec \eta \right\rangle_{\Gamma^m}
 = 0  \qquad\forall\ \vec\eta \in \Vh\,, 
\end{align*}
which immediately implies that $\vec X = \vec 0$ and hence $\vec\kappa=\vec0$.
Finally, (\ref{eq:GDe}) is clearly a symmetric, positive definite linear
system with a unique solution $\Psi^{m+1} \in \Whp$. The desired result
(\ref{eq:consm}) follows on summing (\ref{eq:GDe}) for 
$k = 1, \ldots, K_\Gamma$. 
In order to prove (\ref{eq:dmp}) we assume that $\Psi^m\geq0$ and observe from 
(\ref{eq:GDe}) that this implies that
\begin{equation} \label{eq:dmp1}
\left\langle \Psi^{m+1}, [\Psi^{m+1}]_- \right\rangle_{\Gamma^{m+1}}^h
+ \tau\,\Ds\left\langle \nabs\, \Psi^{m+1}, \nabs\, \pi^{m+1}\,[\Psi^{m+1}]_-
\right\rangle_{\Gamma^{m+1}} \leq 0\,.
\end{equation}
Similarly to (\ref{eq:LG}) it follows that under our assumptions the
second term in (\ref{eq:dmp1}) is nonnegative,
which yields that 
$$\left\langle [\Psi^{m+1}]_-, [\Psi^{m+1}]_- 
\right\rangle_{\Gamma^{m+1}}^h 
= \left\langle \Psi^{m+1}, [\Psi^{m+1}]_- \right\rangle_{\Gamma^{m+1}}^h 
\leq0\,,$$
i.e.\ $\Psi^{m+1} \geq 0$.
\end{proof}

Our proposed fully discrete equivalent of (\ref{eq:sdHGa}--e) is given as
follows, where we recall the regularization parameter $\epsilon>0$ and the
definitions (\ref{eq:Freg},b).
Let $\Gamma^0$, an approximation to $\Gamma(0)$, 
and $\vec U^0\in \uspace^0$, as well as $\kappa^0 \in W(\Gamma^0)$
and $\Psi^0 \in W(\Gamma^0)$ be given.
For $m=0,\ldots, M-1$, find $\vec U^{m+1} \in \uspace^m$, 
$P^{m+1} \in \widehat\pspace^m$, $\vec{X}^{m+1}\in\Vh$ and 
$\kappa^{m+1} \in \Wh$ such that
\begin{subequations}
\begin{align}
&
\tfrac12 \left( \frac{\rho^m\,\vec U^{m+1} - (I^m_0\,\rho^{m-1})
\,\vec I^m_2\,\vec U^m}{\tau}
+(I^m_0\,\rho^{m-1}) \,\frac{\vec U^{m+1}- \vec I^m_2\,\vec{U}^m}{\tau}, 
\vec \xi \right)
 \nonumber \\ & \qquad
+ 2\left(\mu^m\,\mat D(\vec U^{m+1}), \mat D(\vec \xi) \right)
+ \tfrac12\left(\rho^m, 
 [(\vec I^m_2\,\vec U^m\,.\,\nabla)\,\vec U^{m+1}]\,.\,\vec \xi
- [(\vec I^m_2\,\vec U^m\,.\,\nabla)\,\vec \xi]\,.\,\vec U^{m+1} \right)
\nonumber \\ & \qquad
- \left(P^{m+1}, \nabla\,.\,\vec \xi\right)
= \left(\rho^m\,\vec f^{m+1}_1 + \vec f^{m+1}_2, \vec \xi\right)
 + \left\langle \pi^m\,[\gamma_\epsilon(\Psi^m)\,\kappa^{m}]\,\vec\nu^m,
   \vec\xi\right\rangle_{\Gamma^m}
\nonumber \\ & \qquad\qquad\qquad\qquad\qquad\qquad
 + \left\langle \nabs\,\pi^m\,[\gamma_\epsilon(\Psi^m)], 
   \vec\xi\right\rangle_{\Gamma^m}^h
\qquad \forall\ \vec\xi \in \uspace^m 
\,, \label{eq:HGa}\\
& \left(\nabla\,.\,\vec U^{m+1}, \varphi\right)  = 0 
\quad \forall\ \varphi \in \widehat\pspace^m\,,
\label{eq:HGb} \\
&  \left\langle \frac{\vec X^{m+1} - \vec X^m}{\tau} ,
\chi\,\vec\nu^m \right\rangle_{\Gamma^m}^h
= \left\langle \vec U^{m+1}, 
\chi\,\vec\nu^m \right\rangle_{\Gamma^m} 
 \quad\forall\ \chi \in \Wh\,,
\label{eq:HGc} \\
& \left\langle \kappa^{m+1}\,\vec\nu^m, \vec\eta \right\rangle_{\Gamma^m}^h
+ \left\langle \nabs\,\vec X^{m+1}, \nabs\,\vec \eta \right\rangle_{\Gamma^m}
 = 0  \quad\forall\ \vec\eta \in \Vh \label{eq:HGd} \\
\intertext{and set $\Gamma^{m+1} = \vec{X}^{m+1}(\Gamma^m)$. 
We note that in (\ref{eq:HGa}), similarly to $\vec\kappa^m$ in (\ref{eq:GDa}), 
for $m\geq 1$ 
we denote by $\kappa^m$ the function $z \in \Wh$, defined by 
$z(\vec{q}^m_k) = \kappa^m(\vec{q}^{m-1}_k)$, $k=1,\ldots, K_\Gamma$, 
where $\kappa^m \in W(\Gamma^{m-1})$ is given.
Then find $\Psi^{m+1} \in \Whp$ such that}
& \frac1{\tau}
\left\langle \Psi^{m+1}, \chi^{m+1}_k \right\rangle_{\Gamma^{m+1}}^h 
+ \Ds\left\langle \nabs\, \Psi^{m+1}, \nabs\, \chi^{m+1}_k
\right\rangle_{\Gamma^{m+1}}
\nonumber \\ & \quad
= \frac1{\tau}
 \left\langle \Psi^{m}, \chi^{m}_k \right\rangle_{\Gamma^m}^h 
 - \left\langle \Psi^m_{\star,\epsilon} \left( 
\frac{\vec X^{m+1} - \vec X^m}{\tau} - \vec U^{m+1} \right) ,
\nabs\,\chi^{m}_k \right\rangle_{\Gamma^m}^h
\quad\forall\ k \in \{1,\ldots,K_\Gamma\}\,,
\label{eq:HGe}
\end{align}
\end{subequations}
where $\Psi^m_{\star,\epsilon} = \Psi^m$ for $d=3$ and, 
similarly to (\ref{eq:Psihstar}),
\begin{equation*} 
\Psi^m_{\star,\epsilon} = \begin{cases}
- \frac{\gamma_\epsilon(\Psi^m_k) - \gamma_\epsilon(\Psi^m_{k-1})}
{F'_\epsilon(\Psi^m_k) - F'_\epsilon(\Psi^m_{k-1})} &
F'_\epsilon(\Psi^m_{k-1}) \not= F'_\epsilon(\Psi^m_k)\,, \\
\frac12\,(\Psi^m_{k-1} + \Psi^m_k) 
& F'_\epsilon(\Psi^m_{k-1}) = F'_\epsilon(\Psi^m_k)\,,
\end{cases}
\quad\text{on}\quad [\vec q^m_{k-1}, \vec q^m_{k}]
\quad\forall\ k \in \{1,\ldots,K_\Gamma\}
\end{equation*}
for $d=2$, where $\Psi^m = \sum_{k=1}^{K_\Gamma} \Psi^m_k\,\chi^m_k$.
We observe that (\ref{eq:HGa}--e) is a linear scheme in that
it leads to a linear system of equations for the unknowns 
$(\vec U^{m+1}, P^{m+1}, \vec{X}^{m+1}, \kappa^{m+1}, \Psi^{m+1})$ 
at each time level. In particular, the system (\ref{eq:HGa}--e) clearly
decouples into (\ref{eq:HGa},b) for $(\vec U^{m+1}, P^{m+1})$, then
(\ref{eq:HGc},d) for $(\vec{X}^{m+1}, \kappa^{m+1})$ and finally (\ref{eq:HGe}) 
for $\Psi^{m+1}$. 

In order to prove the existence of a unique solution to (\ref{eq:HGc},d) we
need to make the following very mild additional assumption.

\begin{itemize}
\item[$(\mathcal{B})$]
For $k= 1 , \ldots, K_\Gamma$, let
$\Xi_k^m:= \{\sigma^m_j : \vec{q}^m_k \in \overline{\sigma^m_j}\}$
and set
\begin{equation*}
\Lambda_k^m := \bigcup_{\sigma^m_j \in \Xi_k^m} \overline{\sigma^m_j}
 \qquad \mbox{and} \qquad
\vec\omega^m_k := \frac{1}{\mathcal{H}^{d-1}(\Lambda^m_k)}
\sum_{\sigma^m_j\in \Xi_k^m} \mathcal{H}^{d-1}(\sigma^m_j)
\;\vec{\nu}^m_j\,. 
\end{equation*}
Then we further assume that 
$\dim \spa\{\vec{\omega}^m_k\}_{k=1}^{K_\Gamma} = d$, $m=0,\ldots, M-1$.
\end{itemize}
We refer to \cite{triplej} and \cite{gflows3d} for more details and for an
interpretation of this assumption, but we note that
$(\mathcal{B})$ is always satisfied if $\Gamma^m$ has no self-intersections.
Given the above definitions, we introduce the piecewise linear 
vertex normal function
\begin{equation*} 
\vec\omega^m := \sum_{k=1}^{K_\Gamma} \chi^m_k\,\vec\omega^m_k \in \Vh \,,
\end{equation*}
and note that 
\begin{equation} 
\left\langle \vec{v}, w\,\vec\nu^m\right\rangle_{\Gamma^m}^h =
\left\langle \vec{v}, w\,\vec\omega^m\right\rangle_{\Gamma^m}^h 
\qquad \forall\ \vec{v} \in \Vh\,,\ w \in \Wh \,.
\label{eq:NI}
\end{equation}

\begin{thm} \label{thm:BGN}
Let the assumption $(\mathcal{A})$ hold.
If the LBB condition {\rm (\ref{eq:LBB})} holds, then there exists a unique
solution $(\vec U^{m+1}, P^{m+1}) \in \uspace^m\times\widehat\pspace^m$ 
to {\rm (\ref{eq:HGa},b)}. In all other
cases there exists a unique solution $\vec U^{m+1} \in \uspace^m_0$ to the
reduced equation {\rm (\ref{eq:HGa})} with $\uspace^m$ replaced by
$\uspace^m_0$.
If the assumption ($\mathcal{B}$) holds,
then there exists a unique solution
$(\vec{X}^{m+1}, \kappa^{m+1}) \in \Vh \times \Wh$ to 
{\rm (\ref{eq:HGc},d)}.
Finally, there exists a unique solution $\Psi^{m+1}\in\Whp$ to 
{\rm (\ref{eq:HGe})} that satisfies {\rm (\ref{eq:consm})}. 
\end{thm}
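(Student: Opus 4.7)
The plan is to mirror the structure of the proof of Theorem~\ref{thm:GD}: since all the sub-systems in (\ref{eq:HGa}--e) are linear, existence reduces to uniqueness, which I would establish by showing that each homogeneous counterpart admits only the trivial solution. The coupled system decouples into the Navier--Stokes block (\ref{eq:HGa},b), the geometric block (\ref{eq:HGc},d), and the surfactant equation (\ref{eq:HGe}), and I would treat them in that order.

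For the Navier--Stokes block the surface-tension and curvature terms on the right-hand side depend only on data at time level $t_m$, so the homogeneous system coincides exactly with the one analyzed in Theorem~\ref{thm:GD}. Testing the homogeneous (\ref{eq:HGa}) with $\vec\xi = \vec U$ and (\ref{eq:HGb}) with $\varphi = P$ annihilates the antisymmetric convective term and yields $\tfrac12((\rho^m + I^m_0\,\rho^{m-1})\,\vec U, \vec U) + 2\,\tau\,(\mu^m\,\mat D(\vec U), \mat D(\vec U)) = 0$. Since $\rho_\pm > 0$ this forces $\vec U = \vec 0$, and then the LBB condition (\ref{eq:LBB}) gives $P = 0$ in the full setting, while the argument on the subspace $\uspace^m_0$ handles the reduced setting without any change.

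The geometric block is where the argument deviates from Theorem~\ref{thm:GD}. In the homogeneous version of (\ref{eq:HGc},d) I would choose $\vec\eta = \vec X$ in (\ref{eq:HGd}) and $\chi = \kappa$ in (\ref{eq:HGc}); subtracting these cancels the mixed term $\langle \kappa\,\vec\nu^m, \vec X\rangle_{\Gamma^m}^h$ and leaves $\langle \nabs\,\vec X, \nabs\,\vec X\rangle_{\Gamma^m} = 0$, so $\vec X$ is piecewise constant and, $\Gamma^m$ being connected, equal to a global constant $\vec c \in \R^d$. Substituting $\vec X = \vec c$ into the homogeneous (\ref{eq:HGc}) and applying (\ref{eq:NI}) recasts it as $\langle \vec c, \chi\,\vec\omega^m\rangle_{\Gamma^m}^h = 0$ for all $\chi \in \Wh$; evaluating at each nodal basis function $\chi^m_k$ gives $\vec c \cdot \vec\omega^m_k = 0$ for $k = 1, \ldots, K_\Gamma$, and the spanning assumption $(\mathcal{B})$ then forces $\vec c = \vec 0$. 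With $\vec X = \vec 0$, the homogeneous (\ref{eq:HGd}) reduces via (\ref{eq:NI}) to $\langle \kappa\,\vec\omega^m, \vec\eta\rangle_{\Gamma^m}^h = 0$ for all $\vec\eta \in \Vh$; selecting nodal test functions $\vec\eta = \chi^m_k\,\vec v$ for arbitrary $\vec v \in \R^d$ yields $\kappa_k\,\vec\omega^m_k = \vec 0$ at every vertex, and $(\mathcal{B})$ then gives $\kappa = 0$.

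For (\ref{eq:HGe}) the matrix is the symmetric positive-definite sum of the mass-lumped mass matrix and $\tau\,\Ds$ times the stiffness matrix on $\Gamma^{m+1}$ (positivity of the mass matrix using $(\mathcal{A})$), so $\Psi^{m+1} \in \Whp$ exists and is unique. Conservation (\ref{eq:consm}) follows by summing (\ref{eq:HGe}) over $k = 1, \ldots, K_\Gamma$ and invoking $\sum_k \chi^{m+1}_k \equiv 1$ on $\Gamma^{m+1}$ and $\sum_k \chi^m_k \equiv 1$ on $\Gamma^m$, so that the stiffness contribution and the extra advective contribution on the right-hand side both vanish (their arguments $\nabs(\sum_k \chi^{m+1}_k)$ and $\nabs(\sum_k \chi^m_k)$ are zero), leaving $\tau^{-1}(\langle \Psi^{m+1}, 1\rangle_{\Gamma^{m+1}} - \langle \Psi^m, 1\rangle_{\Gamma^m}) = 0$. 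The principal obstacle is the geometric block, where identity (\ref{eq:NI}) is indispensable: it is the only link between the piecewise-constant scheme normal $\vec\nu^m$ appearing in (\ref{eq:HGc},d) and the vertex normals $\vec\omega^m_k$ featuring in $(\mathcal{B})$, without which the rank condition on $\{\vec\omega^m_k\}$ would not constrain the rigid translations of $\vec X$ nor the nodal nullspace of $\kappa$.
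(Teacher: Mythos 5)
Your proposal is correct and follows essentially the same route as the paper: reduce each linear block to its homogeneous system, reuse the Theorem~\ref{thm:GD} argument for the velocity/pressure and surfactant parts, and for the geometric block obtain $\nabs\vec X=\vec 0$ by the cross-testing $\chi=\kappa$, $\vec\eta=\vec X$, then invoke (\ref{eq:NI}) and assumption $(\mathcal{B})$ to kill the constant $\vec X_c$ and finally $\kappa$. The only cosmetic difference is that the paper tests with the single function $\vec\eta=\vec\pi^m[\kappa\,\vec\omega^m]$ where you use nodal test functions $\chi^m_k\,\vec v$; both yield $\kappa_k\,\vec\omega^m_k=\vec 0$ at every vertex and conclude identically.
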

\begin{proof}
The results for $\vec U^{m+1}$, $P^{m+1}$ and $\Psi^{m+1}$ can be shown exactly
as in the proof of Theorem~\ref{thm:GD}. For the remaining result we consider:
Find $(\vec{X}, \kappa) \in \Vh \times \Wh$ such that
\begin{subequations}
\begin{align}
&  \left\langle \vec X ,
\chi\,\vec\nu^m \right\rangle_{\Gamma^m}^h
 = 0  \qquad\forall\ \chi \in \Wh\,,
\label{eq:proofc} \\
& \left\langle \kappa\,\vec\nu^m, \vec\eta \right\rangle_{\Gamma^m}^h
+ \left\langle \nabs\,\vec X, \nabs\,\vec \eta \right\rangle_{\Gamma^m}
 = 0  \qquad\forall\ \vec\eta \in \Vh\,.
\label{eq:proofd}
\end{align}
\end{subequations}
Choosing $\chi=\kappa$ in (\ref{eq:proofc}) and
$\vec\eta = \vec X$ in (\ref{eq:proofd}) yields that
\begin{align}
\left\langle \nabs\,\vec{X}, \nabs\,\vec{X} \right\rangle_{\Gamma^m} 
=0\,. \label{eq:proof2}
\end{align}
It immediately follows from (\ref{eq:proof2}) 
that $\vec{X} = \vec{X}_c \in \R^d$. 
Together with (\ref{eq:proofc}), (\ref{eq:NI}) and the assumption 
$(\mathcal{B})$ this yields that $\vec{X} = \vec0$.
Now (\ref{eq:proofd}) with $\vec\eta=\vec\pi^m[\kappa\,\vec\omega^m]$, 
recall (\ref{eq:NI}), implies that $\kappa = 0$.
\end{proof}

\begin{rem} \label{rem:fluidfbp}
On replacing $\kappa^m$ in {\rm (\ref{eq:HGa})} with $\kappa^{m+1}$
the subsystem {\rm (\ref{eq:HGa}--d)} no longer decouples. 
However, this system, for the special case of constant surface tension, as in
{\rm (\ref{eq:Fconst})}, 
i.e.\ for a two-phase flow problem without surfactants,
has been considered by the authors in \cite{fluidfbp}. For this finite element
approximation of two-phase flow, the authors proved the existence 
of a unique solution 
$(\vec U^{m+1}, \vec{X}^{m+1}, \kappa^{m+1}) \in \uspace^m_0\times\Vh\times\Wh$
to the reduced system {\rm (\ref{eq:HGa},c,d)},
with $\uspace^m$ replaced by $\uspace^m_0$, and with $\kappa^m$ in 
{\rm (\ref{eq:HGa})} replaced by $\kappa^{m+1}$,
which in addition satisfies the following stability bound:
\begin{align*} 
& \tfrac12\,(\rho^m\,\vec U^{m+1}, \vec U^{m+1}) + 
\gamma_0\, \mathcal{H}^{d-1}(\Gamma^{m+1})
+ \tfrac12\left((I^m_0\rho^{m-1})\,(\vec U^{m+1} - \vec I^m_2\,\vec U^m), 
\vec U^{m+1} - \vec I^m_2\,\vec U^m \right) \nonumber \\ & \hspace{5cm}
+ 2\,\tau\left(\mu^m\,\mat D(U^{m+1}), \mat D(U^{m+1}) \right)
\nonumber \\ & \hspace{2cm}
\leq \tfrac12\,(I^m_0\,\rho^{m-1}\,\vec I^m_2\,\vec U^{m}, 
\vec I^m_2\,\vec U^{m}) + \gamma_0\, \mathcal{H}^{d-1}(\Gamma^{m})
+ \tau\left( \rho^m\,\vec f^{m+1}_1 + \vec f^{m+1}_2, 
\vec U^{m+1} \right)\,.
\end{align*}
The same stability result, in the case {\rm (\ref{eq:Fconst})},
can be shown for the scheme {\rm (\ref{eq:GDa}--e)},
once again on replacing $\vec\kappa^m$ in {\rm (\ref{eq:GDa})} with
$\vec\kappa^{m+1}$.
\end{rem}

The above remark motivates our choice of time discretizations in 
(\ref{eq:HGa}--d). As it does not appear possible to prove a stability result
similar to (\ref{eq:stabHG}) 
for the fully discrete scheme (\ref{eq:HGa}--e) 
for general choices of $\gamma$ such as
(\ref{eq:gamma1},b), we prefer to use $\kappa^m$ in (\ref{eq:HGa}) rather than
$\kappa^{m+1}$, which simplifies the existence and uniqueness proof, as well as
the solution procedure.

\begin{rem} \label{rem:ffbp}
For ease of presentation we have assumed so far that the number of vertices,
$K_\Gamma$, and the number of elements, $J_\Gamma$, of the discrete interface
$\Gamma^m$ remain constant over time. However, it is a simple matter to allow
for a localized refinement procedure as employed in \cite{fluidfbp}. Here any
newly introduced basis function for $\Gamma^{m+1}$, say, needs to be traced
back to $\Gamma^m$ so that {\rm (\ref{eq:HGe})}, and similarly 
{\rm (\ref{eq:GDe})}, remain well-defined. 
\end{rem}

\setcounter{equation}{0}
\section{Numerical results}  \label{sec:6}

For details on the assembly of the linear system arising at each time step
of (\ref{eq:HGa}--e), as well as
details on the adaptive mesh refinement algorithm and the solution procedure,
we refer to \cite{fluidfbp}. The main new ingredient is (\ref{eq:HGe}), which
decouples from (\ref{eq:HGa}--d) and so is straightforward to solve. An
analogous comment holds for the scheme (\ref{eq:GDa}--e).
We recall from \cite{fluidfbp} that for the bulk mesh adaptation we
use a strategy that results in a fine mesh size
$h_f$ around $\Gamma^m$ and a coarse mesh size $h_c$ 
further away from it. 
Here
$h_{f} = \frac{2\,\min\{H_1,H_2\}}{N_{f}}$ and 
$h_{c} =  \frac{2\,\min\{H_1,H_2\}}{N_{c}}$
are given by two integer numbers $N_f >  N_c$, where we assume from now on that
$\Omega$ is given by $\times_{i=1}^d (-H_i,H_i)$. 
We remark that we implemented our scheme with the help of 
the finite element toolbox ALBERTA, see \cite{Alberta}.

For the scheme (\ref{eq:HGa}--e) we fix $\epsilon = 10^{-5}$, and in all our
numerical experiments presented in this section
the discrete surfactant concentration $\Psi^m$ remained above 
$\epsilon$ throughout the evolution, so that $\gamma_\epsilon(\Psi^m) =
\gamma(\Psi^m)$, recall (\ref{eq:geps}). 
Unless otherwise stated we use the linear equation of state (\ref{eq:gamma1}) 
for the surface tension, and for the numerical simulations without surfactant 
we set $\beta = 0$ in (\ref{eq:gamma1}). We set $\Psi^0 = \psi_0 = 1$, unless
stated otherwise.
In addition, we employ the lowest order
Taylor--Hood element P2--P1 in all computations and
set $\vec U^0 = \vec I^0_2\,\vec u_0$, where $\vec u_0 = \vec 0$
unless stated otherwise.
For the initial interface we always choose a circle/sphere of radius $R_0$ 
and set
$\kappa^0 = -\frac{d-1}{R_0}$ for the scheme (\ref{eq:HGa}--e). For the scheme
(\ref{eq:GDa}--e) we let $\vec\kappa^0 \in \Vhz$ be the solution of 
(\ref{eq:GDd}) with $m$ and $m+1$ replaced by zero.
To summarize the discretization parameters we use the shorthand notation
$n\,{\rm adapt}_{k,l}$ from \cite{fluidfbp}. 
The subscripts refer to the fineness of the spatial discretizations, i.e.\
for the set $n\,{\rm adapt}_{k, l}$ it holds that 
$N_f = 2^k$ and $N_c = 2^l$. For the case $d=2$ we have in addition that 
$K_\Gamma = J_\Gamma = 2^k$, while for $d=3$ it holds that
$(K_\Gamma, J_\Gamma) = (770, 1536), (1538, 3072), (3074, 6144)$ for 
$k = 5,6,7$. 
Finally, the uniform time step size 
for the set $n\,{\rm adapt}_{k,l}$ is given by $\tau = 10^{-3} / n$,
and if $n=1$ we write ${\rm adapt}_{k, l}$.

\subsection{Convergence experiments for convection diffusion equation}
In this subsection we test the two approximations
(\ref{eq:GDc},e) and (\ref{eq:HGc}--e) for the convection diffusion equation
(\ref{eq:1surf}), in a situation where the evolution of the surface 
$\Gamma(t)$ is given. In particular, we perform convergence experiments for 
the true solution from the Appendix; that is, 
$\psi(\vec z, t) = e^{-6\,t}\,z_1\,z_2$ is fixed on the moving ellipsoid 
$\Gamma(t)$ with time dependent $x_1$-axis.
To this end, we replace $\vec U^{m+1}$ in (\ref{eq:GDc}) and (\ref{eq:HGc},e) 
with $\vec u(\cdot, t_{m+1})$ as defined in (\ref{eq:trueu}), and set $\Ds=1$.
In addition, we add the term
\begin{equation*} 
\left\langle f^{m+1}_\Gamma, \chi^{m+1}_k\right\rangle_{\Gamma^{m+1}}^h
\end{equation*}
to the right hand sides of (\ref{eq:GDe}) and (\ref{eq:HGe}), where
$f^{m+1}_\Gamma \in \Whp$ is defined such that
$$
f^{m+1}_\Gamma(\vec q^{m+1}_k) = f_\Gamma ( 
\vec\Pi_{\Gamma(t_{m+1})}\,\vec q^{m+1}_k, t_{m+1}) 
\qquad k = 1,\ldots,K_\Gamma\,,
$$
with $f_\Gamma$ given as in (\ref{eq:truef}), and with 
$\vec\Pi_{\Gamma(t)} : \R^d \to \Gamma(t)$ denoting the orthogonal projection
onto $\Gamma(t)$ for $t \in [0,T]$. 
In practice this projection can be computed with the help of
a Newton iteration. 
In Tables~\ref{tab:conv2dL2} and \ref{tab:conv3dL2} we report
on the error
$$
\LerrorPsipsi := \left[ \sum_{m=1}^{M} \tau
\left\langle [\Psi^m - \psi (\cdot, t_m) \circ \vec\Pi_{\Gamma(t_m)}]^2,
1 \right\rangle_{\Gamma^{m}}^h \right]^\frac12
$$
for convergence experiments for $d=2$ and $d=3$, respectively. Here we choose
the time interval $[0,T]$ with $T=1$, 
and for the uniform time step size we take 
$\tau = h_0^2$, where $h_0$ denotes the maximal element diameter of $\Gamma^0$.
Of course, for the last time step we use the time step size 
$T - t_{M-1} = T - (M-1)\,\tau$.
\begin{table}
\center
\begin{tabular}{|c|c|c|}
\hline
$h_0$ & (\ref{eq:GDc},e) & (\ref{eq:HGc}--e) \\
\hline 
3.9018e-01 & 5.9569e-03 & 6.1760e-03 \\ 
1.9603e-01 & 2.4356e-04 & 2.4544e-04 \\
9.8135e-02 & 2.1006e-04 & 2.1197e-04 \\
4.9082e-02 & 9.0700e-06 & 9.1626e-06 \\
2.4543e-02 & 1.3328e-06 & 1.3469e-06 \\
\hline
\end{tabular}
\caption{The errors $\LerrorPsipsi$ for the convergence experiment for $d=2$.}
\label{tab:conv2dL2}
\end{table}%
\begin{table}
\center
\begin{tabular}{|c|c|c|}
\hline
$h_0$ & (\ref{eq:GDc},e) & (\ref{eq:HGc}--e) \\
\hline 
7.6537e-01 & 3.1233e-02 & 1.7760e-02 \\ 
4.0994e-01 & 2.6612e-03 & 3.1695e-03 \\ 
2.0854e-01 & 4.1570e-04 & 4.2492e-04 \\ 
1.0472e-01 & 2.1768e-05 & 2.1966e-05 \\ 
5.2416e-02 & 6.0305e-06 & 6.0785e-06 \\ 
\hline
\end{tabular}
\caption{The errors $\LerrorPsipsi$ for the convergence experiment for $d=3$.}
\label{tab:conv3dL2}
\end{table}%
We observe that both schemes show very similar errors, indicating a convergence
order of at least $\mathcal{O}(h_0^2)$.

\subsection{Numerical simulations in 2d} \label{sec:61}

In this section we consider some numerical simulations for two-phase flow with
insoluble surfactant in two space dimensions. We begin with a comparison
between the schemes (\ref{eq:GDa}--e) and (\ref{eq:HGa}--e) for a rising bubble
experiment that is motivated by the benchmark problems in \cite{HysingTKPBGT09}
for two-phase Navier--Stokes flow.

\subsubsection{Rising bubble benchmark problem 1} \label{sec:612}

We use the setup described in \cite{HysingTKPBGT09}, see Figure~2 there;
i.e.\ $\Omega = (0,1) \times (0,2)$ with 
$\partial_1\Omega = [0,1] \times \{0,2\}$ and 
$\partial_2\Omega = \{0,1\} \times (0,2)$.
Moreover, $\Gamma_0 = \{\vec z \in \R^2 : |\vec z - (\frac12, \frac12)^T| =
\frac14\}$.
The physical parameters from the test case 1 in 
\citet[Table~I]{HysingTKPBGT09}, in the absence of surfactant, 
are given by
\begin{equation} \label{eq:Hysing1}
\rho_+ = 1000\,,\quad \rho_- = 100\,,\quad \mu_+ = 10\,,\quad \mu_- = 1\,,\quad
\gamma_0 = 24.5\,,\quad \vec f_1 = -0.98\,\vec\ek_d\,,\quad 
\vec f_2 = \vec 0\,,
\end{equation}
where, here and throughout, 
$\{\vec \ek_j\}_{j=1}^d$ denotes the standard basis in $\R^d$.
The time interval chosen for the simulation is $[0,T]$ with $T=3$.
For the surfactant problem we choose the parameters $\Ds = 0.1$ and 
(\ref{eq:gamma1}) with $\beta = 0.5$.

We start with a simulation for the scheme (\ref{eq:GDa}--e), using 
the discretization parameters adapt$_{7, 3}$.
The results can be seen on the left of
Figure~\ref{fig:dziuk}. Two things are immediately evident. Firstly, the area
of the inner phase is not conserved. In fact, in this computation the relative
area loss for the inner phase is 62\%. 
And secondly, we see that the vertices of the approximation
$\Gamma^m$ are transported, similarly to the surfactant, with the fluid flow.
This means that many vertices can
be found at the bottom of the bubble, with hardly any vertices left at the top.
The second behaviour can be improved by allowing local mesh refinements on
$\Gamma^m$, recall Remark~\ref{rem:ffbp}.
In particular, we refine an element $\sigma^m$ on $\Gamma^m$ 
whenever $\mathcal{H}^{d-1}(\sigma^m) > \frac74\,\max_{j=1,\ldots, J_\Gamma}
\mathcal{H}^{d-1}(\sigma^0_j)$.
Then the interface remains well resolved, and the final number of elements is
$J^M_\Gamma = 252 > 128 = J^0_\Gamma$. However, coalescence of
vertices can still be observed at the bottom of the bubble, see the plot on the
right of Figure~\ref{fig:dziuk}.
\begin{figure}
\center
\includegraphics[angle=-90,width=0.4\textwidth]{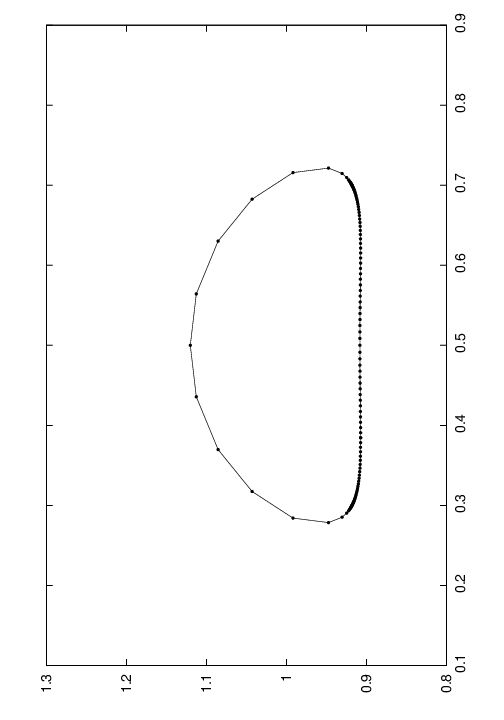}
\includegraphics[angle=-90,width=0.4\textwidth]{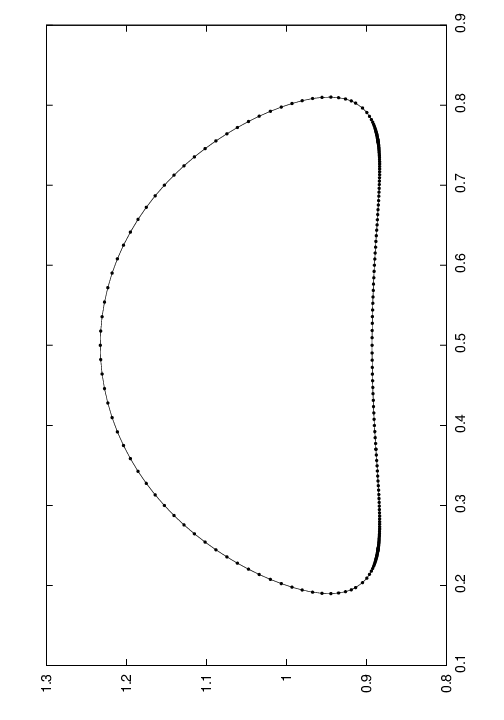}
\caption{(adapt$_{7,3}$) 
Vertex distributions for the final bubbles for the benchmark problem 1 at time 
$T=3$ for the scheme (\ref{eq:GDa}--e) without local refinement on 
$\Gamma^m$ (left), and with local refinement (right).}
\label{fig:dziuk}
\end{figure}%
We remark that for the latter computation the area of the inner phase 
decreases by $14\%$. For completeness we note that this dramatic area loss is
connected to mass lumping being employed on the right hand side of 
(\ref{eq:GDc}). To visualize this effect, we repeat the above computations now
for $\langle \vec U^{m+1}, \vec\chi \rangle_{\Gamma^m}^h$ in (\ref{eq:GDc}) 
replaced by $\langle \vec U^{m+1}, \vec\chi \rangle_{\Gamma^m}$. 
The semidiscrete variant of this new approximation then no longer satisfies 
the stability result in Theorem~\ref{thm:stabGD}. However, in practice this
approximation appears to perform much better, with the relative area loss of
the inner phase now down to 1.4\% for the simulation without local refinement.
The simulation with local refinement leads to coalescence of vertices and 
a clear loss of symmetry, which is of course unphysical, 
see Figure~\ref{fig:dziuknoni}.
\begin{figure}
\center
\includegraphics[angle=-90,width=0.4\textwidth]{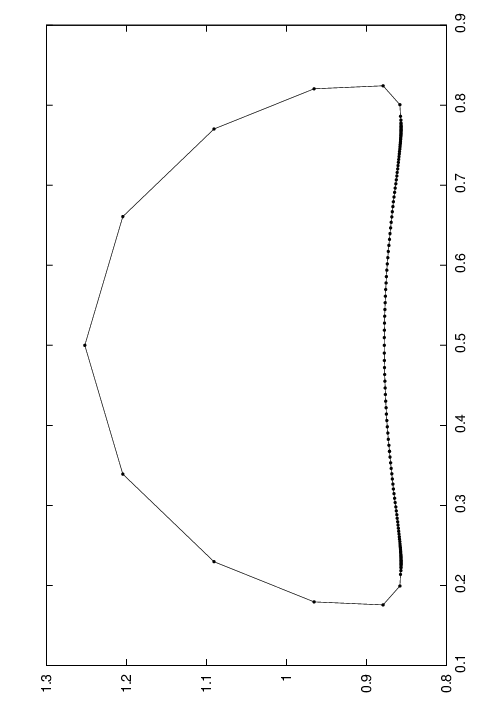}
\includegraphics[angle=-90,width=0.4\textwidth]{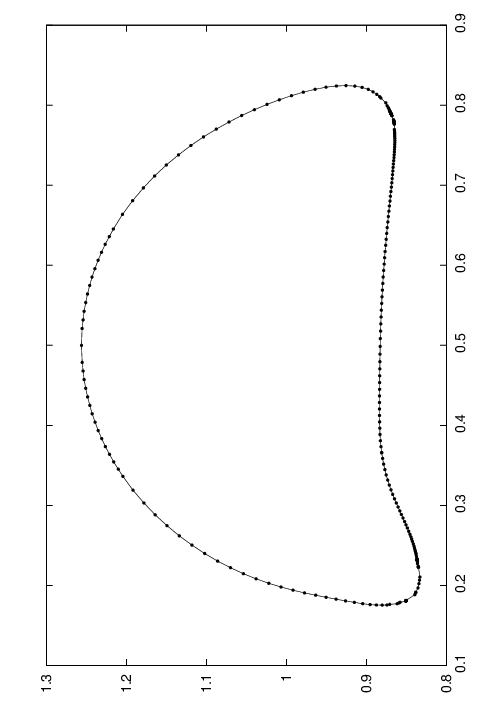}
\caption{(adapt$_{7,3}$) 
Vertex distributions for the 
final bubbles for the benchmark problem 1 at time $T=3$ for 
a variant of the scheme (\ref{eq:GDa}--e) 
without local refinement on $\Gamma^m$ (left)
and with local refinement (right). The loss of symmetry is caused by
coalescence of vertices.}
\label{fig:dziuknoni}
\end{figure}%

The same computation for our preferred scheme (\ref{eq:HGa}--e), where no local
refinements need to be performed because the tangential movement of vertices
yields an almost equidistributed approximation of $\Gamma^m$, can be seen in
Figure~\ref{fig:bgn}, where we compare the run with $\beta = 0.5$ also to the 
case of constant surface tension, i.e.\ $\beta = 0$.
We remark that for these computations the areas of the two phases,
as well as the total surfactant mass on $\Gamma^m$, were conserved. 
\begin{figure}
\center
\includegraphics[angle=-90,width=0.4\textwidth]{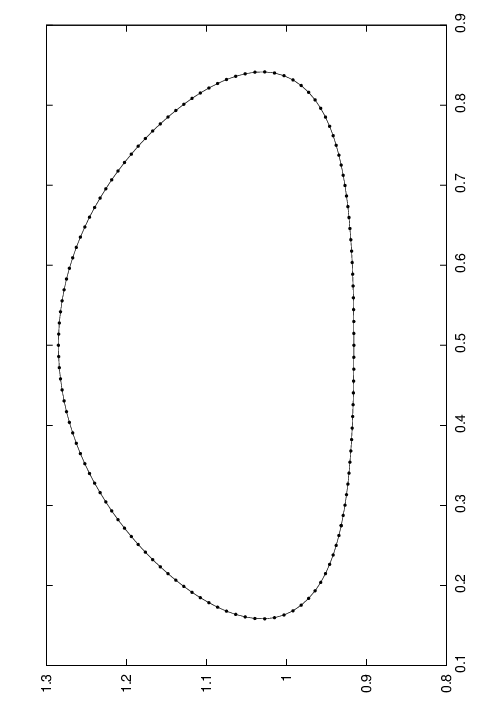}
\includegraphics[angle=-90,width=0.4\textwidth]{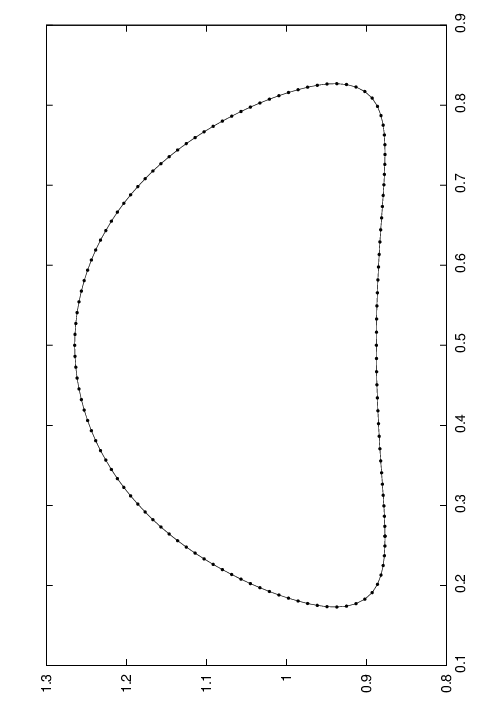}
\caption{(adapt$_{7,3}$)
Vertex distributions for the final bubble for the benchmark problem 1 at 
time $T=3$ for the scheme (\ref{eq:HGa}--e). On the left the computation with 
$\beta=0$, on the right with $\beta = 0.5$.}
\label{fig:bgn}
\end{figure}%

In Figure~\ref{fig:dziukbgn} we show the surfactant concentrations $\Psi^M$
on the final bubble for the two schemes (\ref{eq:GDa}--e) and 
(\ref{eq:HGa}--e), where in the computation for the former scheme we employ
local mesh refinements. We observe that the numerical results
are in rough agreement, apart from the smaller bubble for the scheme
(\ref{eq:GDa}--e) because of the loss of area for the inner phase. 
We also show a plot of the discrete surface energy
$\langle F(\Psi^m), 1 \rangle_{\Gamma^m}^h$, where for (\ref{eq:HGa}--e) 
it holds that $\langle F_\epsilon(\Psi^m), 1 \rangle_{\Gamma^m}^h
= \langle F(\Psi^m), 1 \rangle_{\Gamma^m}^h$ throughout the evolution. 
Here it can be seen that the
plots for the two approximations differ significantly, most probably because of
the area loss for the scheme (\ref{eq:GDa}--e).
\begin{figure}
\center
\includegraphics[angle=-90,width=0.4\textwidth]{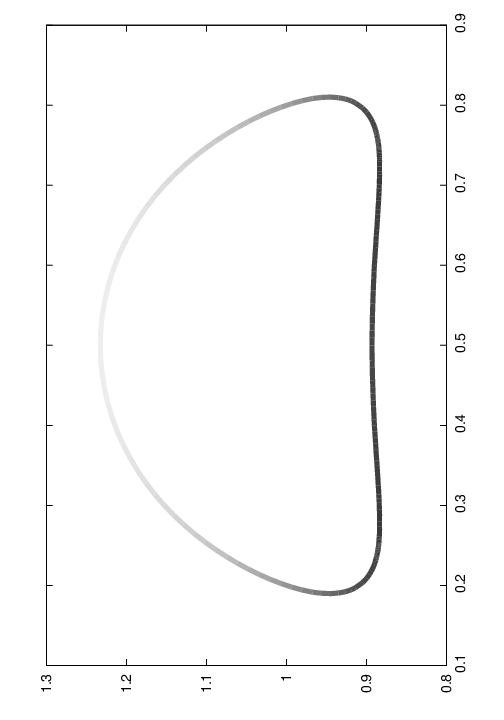}
\includegraphics[angle=-90,width=0.4\textwidth]{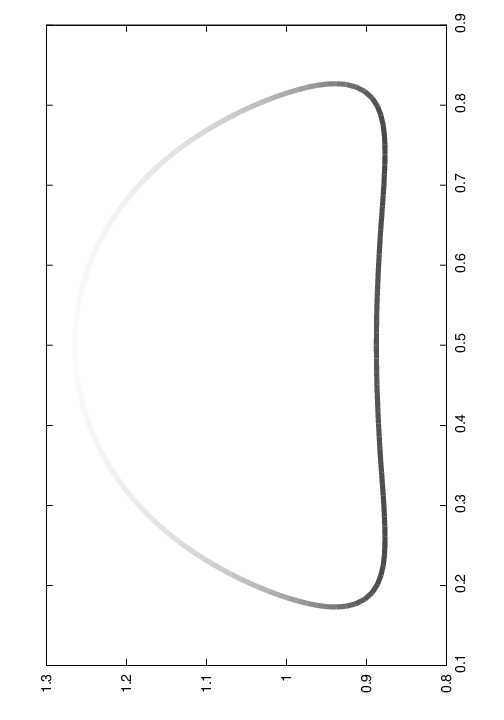}
\includegraphics[angle=-90,width=0.4\textwidth]{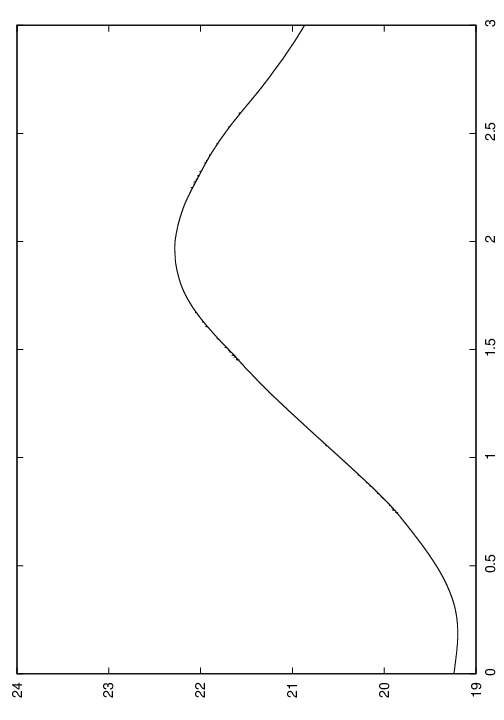}
\includegraphics[angle=-90,width=0.4\textwidth]{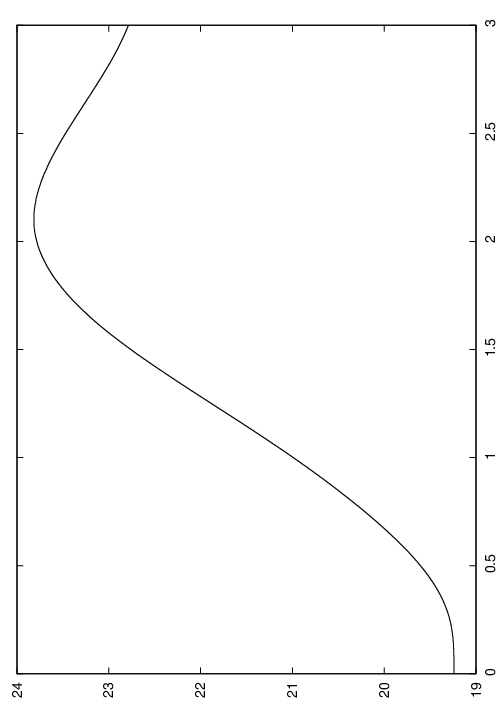}
\caption{(adapt$_{7,3}$) 
The surfactant concentration on the final bubbles for the benchmark problem 1 
at time $T=3$ for the schemes (\ref{eq:GDa}--e) and (\ref{eq:HGa}--e). 
The grey scales linearly with the surfactant
concentration ranging from 0.4 (white) to 1.4 (black).
Below we present plots of $\langle F(\Psi^m), 1 \rangle_{\Gamma^m}^h$ over time
for the two schemes.
}
\label{fig:dziukbgn}
\end{figure}%

The poor mesh properties of the scheme (\ref{eq:GDa}--e), together with the
fact that the volume of the two phases is in general not conserved, mean that
this scheme is not very practical. Of course, the same applies to the scheme
from Remark~\ref{rem:newGD}. It is for this reason that from now on we only
consider numerical experiments for the scheme (\ref{eq:HGa}--e). 

The parameters in (\ref{eq:Hysing1}) were proposed in 
\citet[Table~I]{HysingTKPBGT09} in order to define a test case for two-phase
flow, in the absence of surfactant, for which benchmark computations can be 
performed. We now report on these benchmark quantities also in the presence of
surfactant. To this end, we recall from \cite{fluidfbp} our fully discrete
approximations for
the $x_2$-component of the bubble's centre of mass, 
the bubble's ``degree of circularity'' and the rise velocity:
\begin{equation} \label{eq:benchmarkm}
y_c^m = \frac1{\mathcal{L}^2(\Omega_-^m)}\,\int_{\Omega_-^m} x_2 \dL2\,,
\quad 
\strikec^m = 2\,[\pi\,\mathcal{L}^2(\Omega_-^m)]^\frac12\,
[\mathcal{H}^{1}(\Gamma^m)]^{-1}\,, \quad 
V^m_c = \frac{(\rho^m_-\,\vec U^m, \vec \ek_d)}{(\rho^m_-,1)}\,,
\end{equation}
where $\rho^m_-\in S^m_0$ is defined as in (\ref{eq:rhoma}) 
but with $\rho_+$ replaced by zero.
Finally, we also define the relative overall area/volume loss as
$$
\Mloss = 
\frac{\mathcal{L}^d(\Omega^0_-) - \mathcal{L}^d(\Omega^M_-)}
{\mathcal{L}^d(\Omega^0_-)}\,.
$$
In Table~\ref{tab:Hysing1} we report on these quantities for simulations with
and without surfactant for our preferred scheme
(\ref{eq:HGa}--e). 
Here we note that the numbers for the simulations without surfactant differ
slightly from the ones in \citet[Table~2]{fluidfbp}, because the finite element
approximations employed here is different, recall Remark~\ref{rem:fluidfbp}.
\begin{table}
\center
\begin{tabular}{l|r|r|r|r}
\hline
& adapt$_{5,2}$ & adapt$_{7,3}$ & 
2\,adapt$_{9,4}$ & 5\,adapt$_{11,5}$\\
\hline 
$\Mloss$ & 
  0.0\% &  0.0\% &  0.0\% &  0.0\% \\
$\strikec_{\min}$ & 
 0.9135 & 0.9069 & 0.9034 & 0.9022 \\
$t_{\strikec = \strikec_{\min}}$ & 
 2.0760 & 1.9420 & 1.9105 & 1.9028 \\
$V_{c,\max}$ & 
 0.2477 & 0.2415 & 0.2413 & 0.2420 \\
$t_{V_c = V_{c,\max}}$ & 
 0.9470 & 0.9360 & 0.9255 & 0.9698 \\
$y_c(t=3)$ & 
 1.0906 & 1.0822 & 1.0814 & 1.0815 \\
\hline
\end{tabular}
\begin{tabular}{l|r|r|r|r}
\hline
& adapt$_{5,2}$ & adapt$_{7,3}$ & 
2\,adapt$_{9,4}$ & 5\,adapt$_{11,5}$\\ 
\hline 
$\Mloss$ & 
  0.0\% &  0.0\% &  0.0\% &  0.0\% \\
$\strikec_{\min}$ & 
 0.8779 & 0.8715 & 0.8681 & 0.8669 \\
$t_{\strikec = \strikec_{\min}}$ & 
 2.1330 & 2.0710 & 2.0550 & 2.0500 \\
$V_{c,\max}$ & 
 0.2279 & 0.2243 & 0.2236 & 0.2237 \\
$t_{V_c = V_{c,\max}}$ & 
 1.0070 & 0.9040 & 0.9010 & 0.8710 \\
$y_c(t=3)$ & 
 1.0423 & 1.0449 & 1.0467 & 1.0473 \\
\hline
\end{tabular}
\caption{Some quantitative results for the benchmark problem 1. 
Without surfactant (top) and with surfactant (bottom).}
\label{tab:Hysing1}
\end{table}%
In what follows we
present some visualizations of the numerical results for the runs with 
the discretization parameters 5\,adapt$_{11,5}$.
A plot of $\Gamma^M$ can be seen in Figure~\ref{fig:bubble}, while the time
evolution of the circularity, the centre of mass and the rise velocity
are shown in Figures~\ref{fig:circularity} and \ref{fig:comrise}. 
\begin{figure}
\center
\includegraphics[angle=-90,width=0.6\textwidth]{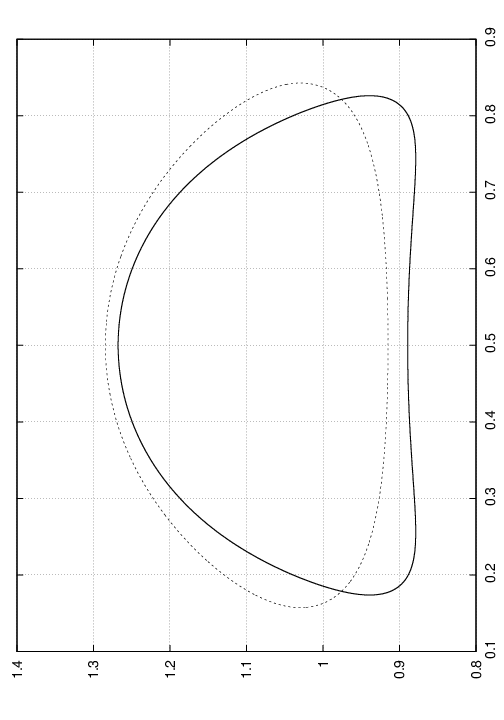}
\caption{(5\,adapt$_{11,5}$)
The final bubble with surfactant for the benchmark problem 1 at time $T=3$. 
The clean bubble is shown dashed.}
\label{fig:bubble}
\end{figure}%
\begin{figure}
\center
\includegraphics[angle=-90,width=0.45\textwidth]{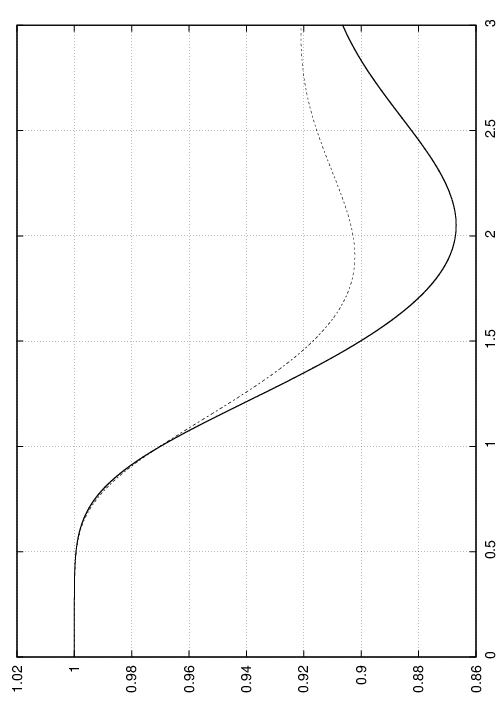}
\caption{(5\,adapt$_{11,5}$)
Circularity of the surfactant bubble for the benchmark problem 1. 
The dashed line is for the clean bubble.}
\label{fig:circularity}
\end{figure}%
\begin{figure}
\center
\includegraphics[angle=-90,width=0.45\textwidth]{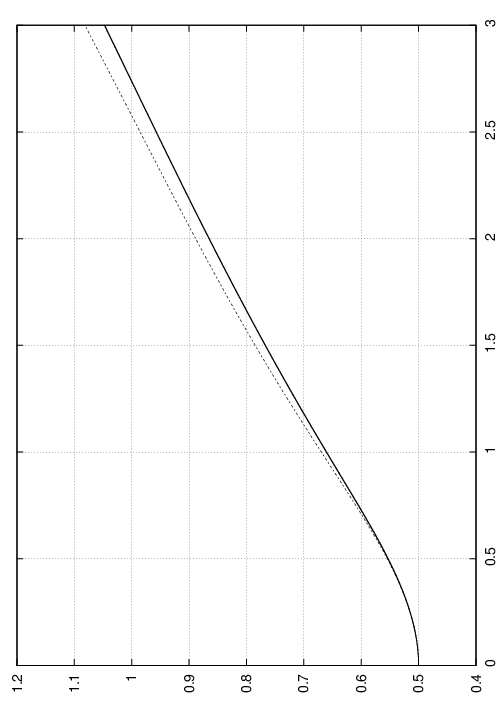}
\includegraphics[angle=-90,width=0.45\textwidth]{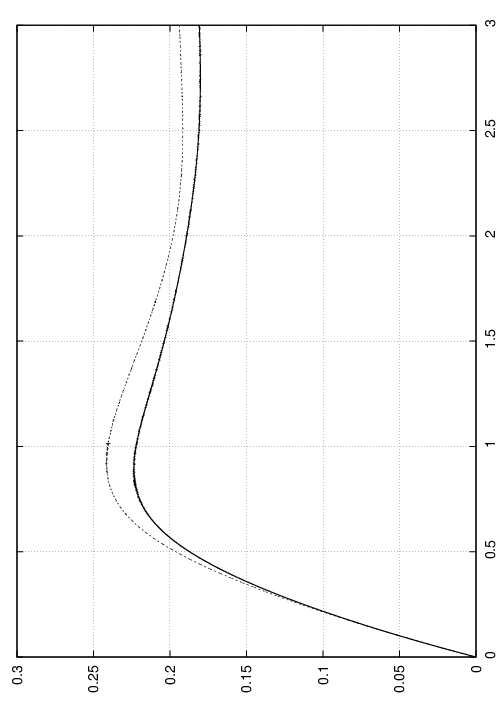}
\caption{(5\,adapt$_{11,5}$)
Centre of mass and rise velocity for the surfactant bubble for the benchmark
problem 1. The dashed lines are for the clean bubble.}
\label{fig:comrise}
\end{figure}%

\subsubsection{Rising bubble benchmark problem 2} \label{sec:613}

In a second set of benchmark computations, we fix
\begin{equation} \label{eq:Hysing2}
\rho_+ = 1000\,,\quad \rho_- = 1\,,\quad \mu_+ = 10\,,\quad \mu_- = 0.1\,,\quad
\gamma_0 = 1.96\,,\quad \vec f_1 = -0.98\,\vec\ek_d\,,\quad \vec f_2 = \vec 0\,,
\end{equation}
as in test case 2 in \citet[Table~I]{HysingTKPBGT09}.
For the surfactant problem we again let $\Ds = 0.1$ and let $\beta = 0.5$
in (\ref{eq:gamma1}).
In Table~\ref{tab:Hysing2} we report on some benchmark quantities for 
simulations with and without surfactant for our preferred scheme
\mbox{(\ref{eq:HGa}--e)}. 
Here we note that 
in contrast to the experiments in \S\ref{sec:612}, there is little difference
between the numbers for the runs with and without surfactant. This is because
in the simulations for (\ref{eq:Hysing2}) 
the large values of $\frac{\rho_+}{\rho_-}$ and
$\frac{\mu_+}{\mu_-}$ dominate the evolution. In particular, they lead to 
elongated fingers developing at the bottom of the rising bubble which means
that there is a significant growth in the overall interface length. In order 
to account for this growth, 
we locally refine $\Gamma^m$ in all the
simulations for the parameters as in (\ref{eq:Hysing2}). Here, similarly to the
experiment on the right of Figure~\ref{fig:dziuk},
we refine an element $\sigma^m$ on $\Gamma^m$ 
whenever $\mathcal{H}^{d-1}(\sigma^m) > \frac74\,\max_{j=1,\ldots, J_\Gamma}
\mathcal{H}^{d-1}(\sigma^0_j)$.
\begin{table}
\center
\begin{tabular}{l|r|r|r}
\hline
& adapt$_{5,2}$ & adapt$_{7,3}$ & 2\,adapt$_{9,4}$ \\
\hline 
$\Mloss$ & 
  0.0\% &  0.0\% &  0.0\% \\
$\strikec_{\min}$ & 
 0.5890 & 0.5198 & 0.5165 \\
$t_{\strikec = \strikec_{\min}}$ & 
 3.0000 & 3.0000 & 3.0000 \\
$V_{c,\max1}$ & 
 0.2584 & 0.2480 & 0.2489 \\
$t_{V_c = V_{c,\max1}}$ & 
 0.8800 & 0.7610 & 0.7295 \\
$V_{c,\max2}$ &
 0.2283 & 0.2305 & 0.2357 \\
$t_{V_c = V_{c,\max2}}$ & 
 2.0000 & 1.9510 & 2.0485 \\
$y_c(t=3)$ & 
 1.1275 & 1.1239 & 1.1319 \\
\hline
\end{tabular}
\begin{tabular}{l|r|r|r}
\hline
& adapt$_{5,2}$ & adapt$_{7,3}$ & 2\,adapt$_{9,4}$ \\
\hline 
$\Mloss$ & 
  0.0\% &  0.0\% &  0.0\% \\
$\strikec_{\min}$ & 
 0.5449 & 0.4996 & 0.4891 \\
$t_{\strikec = \strikec_{\min}}$ & 
 3.0000 & 3.0000 & 3.0000 \\
$V_{c,\max1}$ & 
 0.2565 & 0.2467 & 0.2476 \\
$t_{V_c = V_{c,\max1}}$ & 
 0.8830 & 0.7370 & 0.7395 \\
$V_{c,\max2}$ & 
 0.2283 & 0.2326 & 0.2391 \\
$t_{V_c = V_{c,\max2}}$ & 
 2.0070 & 2.0330 & 2.0830 \\
$y_c(t=3)$ & 
 1.1217 & 1.1197 & 1.1294 \\
\hline
\end{tabular}
\caption{Some quantitative results for the benchmark problem 2. 
Without surfactant (top) and with surfactant (bottom).}
\label{tab:Hysing2}
\end{table}%
In what follows we
present some visualizations of the numerical results for the runs with 
the discretization parameters 2\,adapt$_{9,4}$.
A plot of $\Gamma^M$ can be seen in Figure~\ref{fig:bubble2}, where we also
show the final surfactant concentration $\Psi^M$. Here we observe that most of
the surfactant has accumulated at the inner side walls of the lower part of
the bubble. It is worth pointing out that our numerical method has no
difficulties in computing the evolution of the advection-diffusion equation
on a highly deformed interface as seen in Figure~\ref{fig:bubble2}.
The time evolution of the circularity, the centre of mass and the rise velocity
are shown in Figures~\ref{fig:circularity2} and \ref{fig:comrise2}.
\begin{figure}
\center
\mbox{
\includegraphics[angle=-90,width=0.5\textwidth]{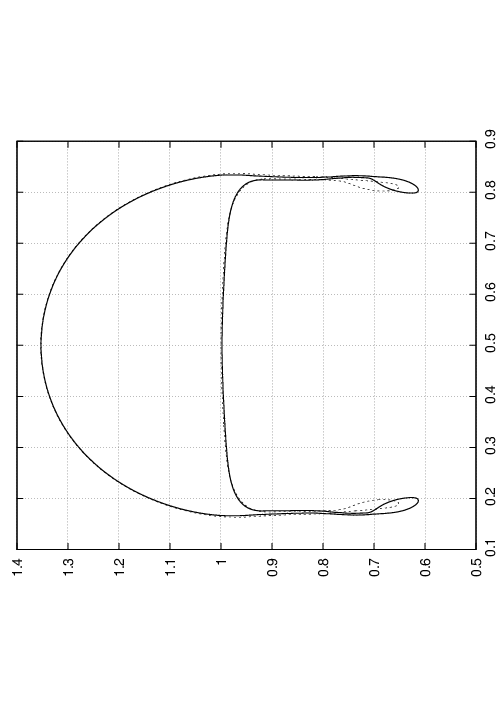}
\includegraphics[angle=-90,width=0.5\textwidth]{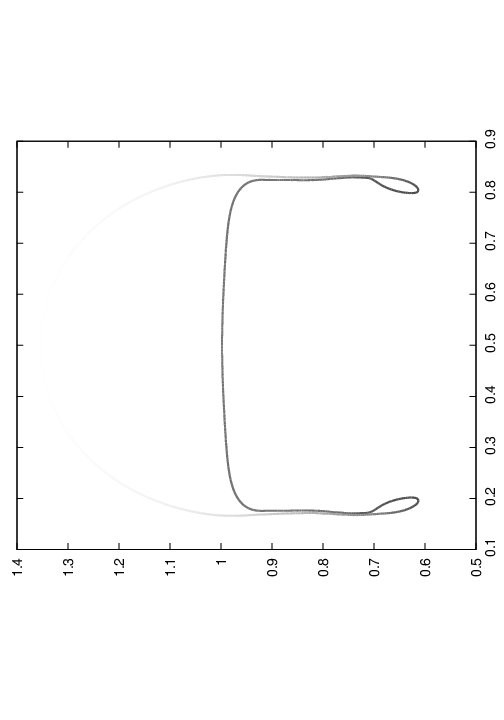}}
\caption{(2\,adapt$_{9,4}$)
The final bubble with surfactant for the benchmark problem 2 at time $T=3$, with the 
surfactant concentration on the right. 
The grey scales linearly with the surfactant concentration ranging from 
0.1 (white) to 0.9 (black). The dashed curve on the left represents the final
shape of the clean bubble.
}
\label{fig:bubble2}
\end{figure}%
\begin{figure}
\center
\includegraphics[angle=-90,width=0.45\textwidth]{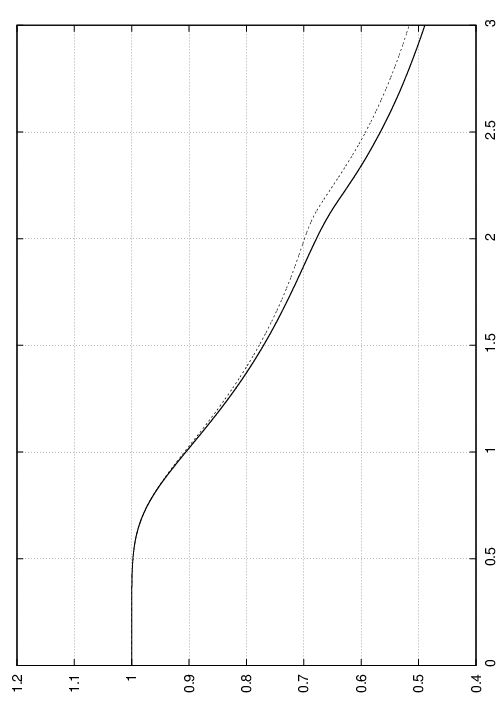}
\caption{(2\,adapt$_{9,4}$)
Circularity of the surfactant bubble for the benchmark problem 2. The dashed line is
for the clean bubble.}
\label{fig:circularity2}
\end{figure}%
\begin{figure}
\center
\includegraphics[angle=-90,width=0.45\textwidth]{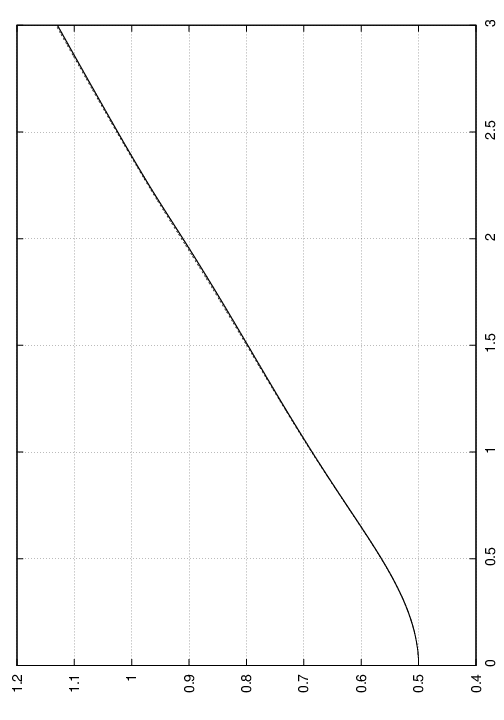}
\includegraphics[angle=-90,width=0.45\textwidth]{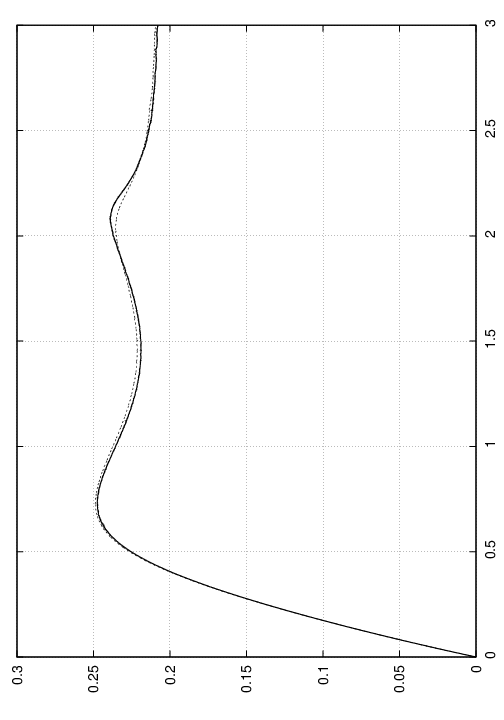}
\caption{(2\,adapt$_{9,4}$)
Centre of mass and rise velocity for the surfactant bubble for the benchmark
problem 2.
The dashed lines are for the clean bubble.}
\label{fig:comrise2}
\end{figure}%

\subsubsection{Bubble in shear flow} \label{sec:611}

In the literature on numerical methods for two-phase flow with insoluble
surfactant it is often common to consider shear flow experiments for an
initially circular bubble in order to study the effect of surfactants and of
different equations of state. In this subsection, we will perform such
simulations for our preferred scheme (\ref{eq:HGa}--e).
Here we consider the setup from \citet[Fig.~1]{LaiTH08}. In particular,
we let $\Omega = (-5,5) \times (-2,2)$ and
prescribe the inhomogeneous Dirichlet boundary condition
$\vec g(\vec z) = (\frac12\,z_2, 0)^T$ on $\partial\Omega = \partial_1\Omega$.
Moreover, $\Gamma_0 = \{\vec z \in \R^2 : |\vec z | = 1 \}$. The physical
parameters are given by
\begin{equation} \label{eq:Lai}
\rho_+ = \rho_- = 1\,, \quad \mu_+ = \mu_- = 0.1\,,\quad \gamma_0 = 0.2\,,
\quad \Ds = 0.1\,,\quad
\vec f = \vec 0\,,\quad \vec u_0 = \vec g\,.
\end{equation}
First we compare the evolutions for the linear equation of state 
(\ref{eq:gamma1}) for
(i) $\beta = 0$, (ii) $\beta = 0.25$ and (iii) $\beta = 0.5$.
Our numerical results appear to agree very well with the ones in
\citet[Fig.~1]{LaiTH08}; see Figure~\ref{fig:2dLai_nr} for more details.
On recalling (\ref{eq:benchmarkm}), we note that the ``circularities'' 
$\strikec^M$ of the final bubbles 
are given by 0.68, 0.59 and 0.51, respectively.
Moreover, we remark that for these simulations the relative overall area 
loss satisfies $|\Mloss| < 0.02\%$, and the same holds true for all of the
remaining numerical experiments in this subsection.
\begin{figure}
\center
\newcommand\localwidth{0.24\textwidth}
\includegraphics[angle=-90,width=\localwidth]{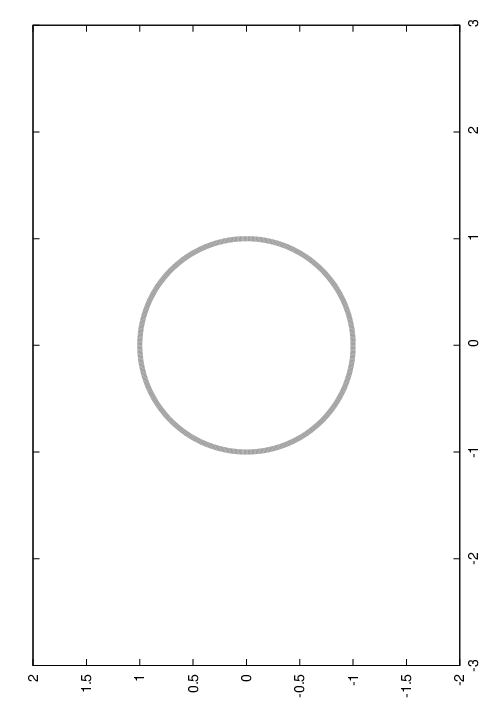}
\includegraphics[angle=-90,width=\localwidth]{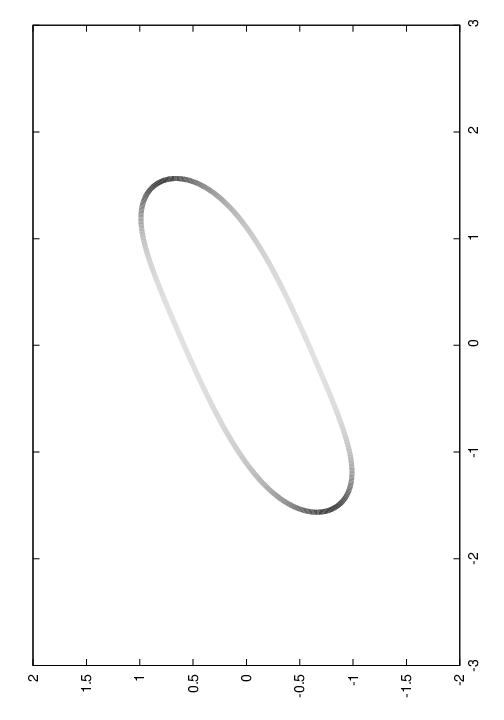}
\includegraphics[angle=-90,width=\localwidth]{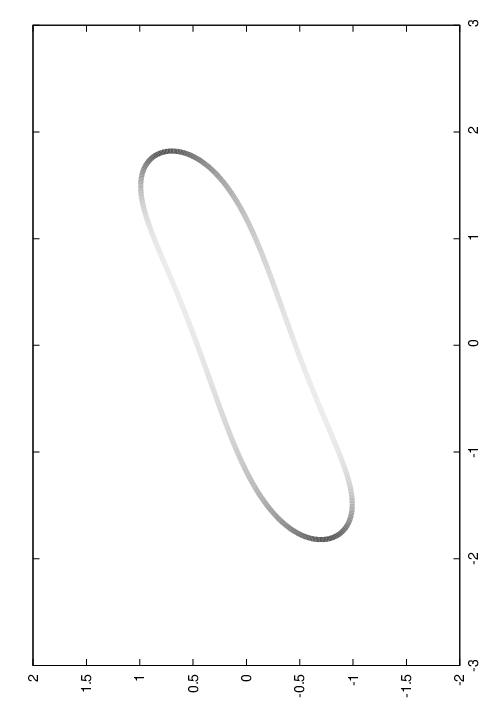}
\includegraphics[angle=-90,width=\localwidth]{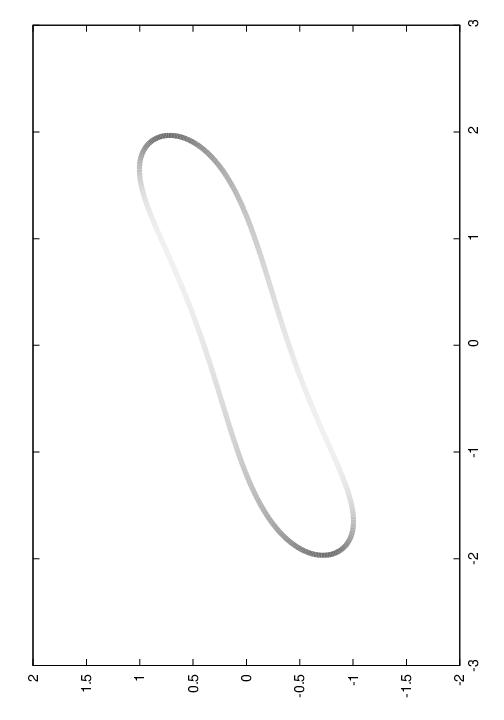}
\includegraphics[angle=-90,width=\localwidth]{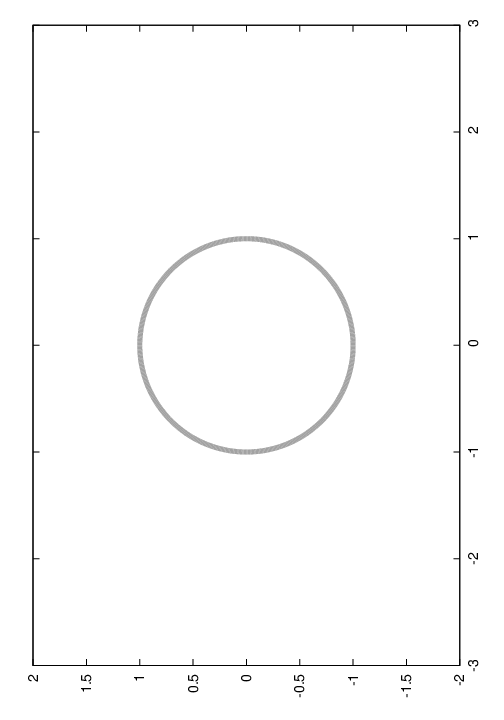}
\includegraphics[angle=-90,width=\localwidth]{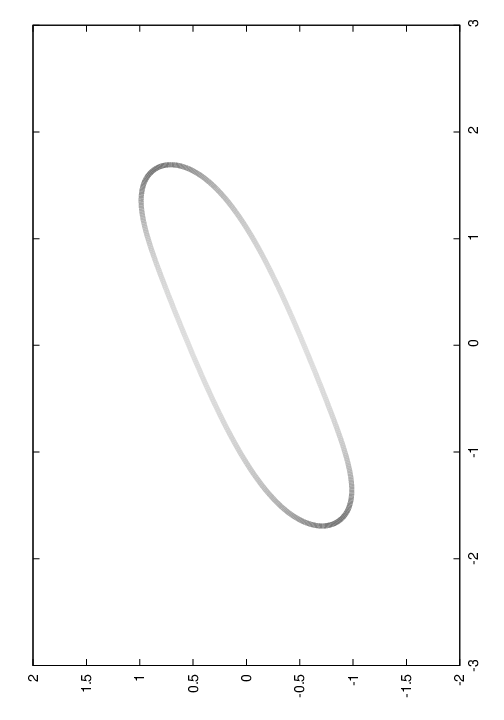}
\includegraphics[angle=-90,width=\localwidth]{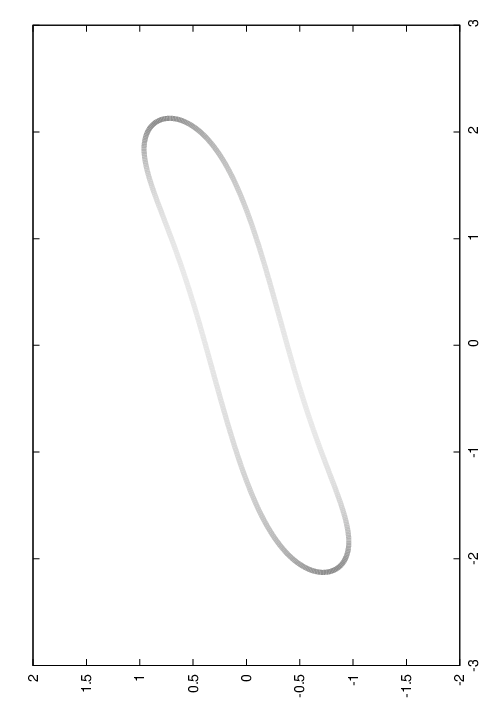}
\includegraphics[angle=-90,width=\localwidth]{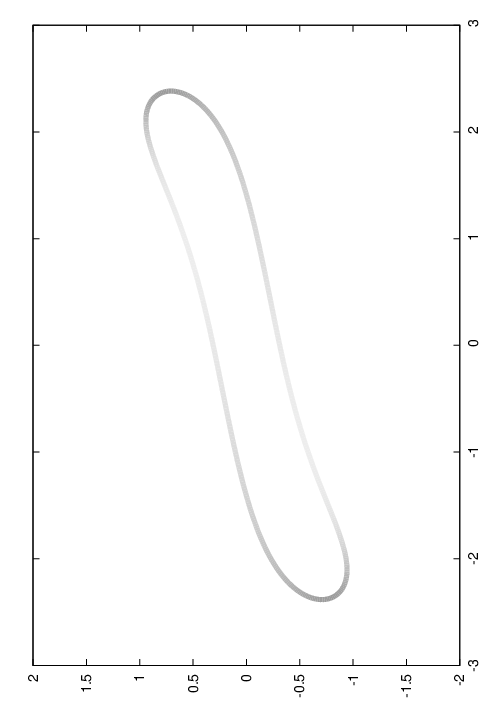}
\includegraphics[angle=-90,width=\localwidth]{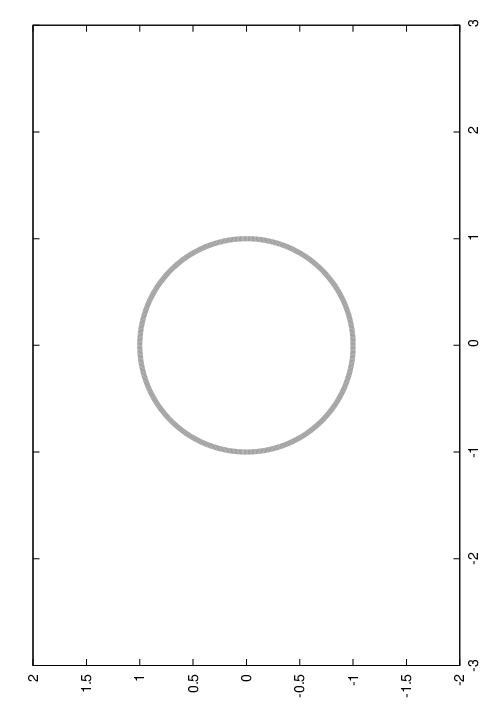}
\includegraphics[angle=-90,width=\localwidth]{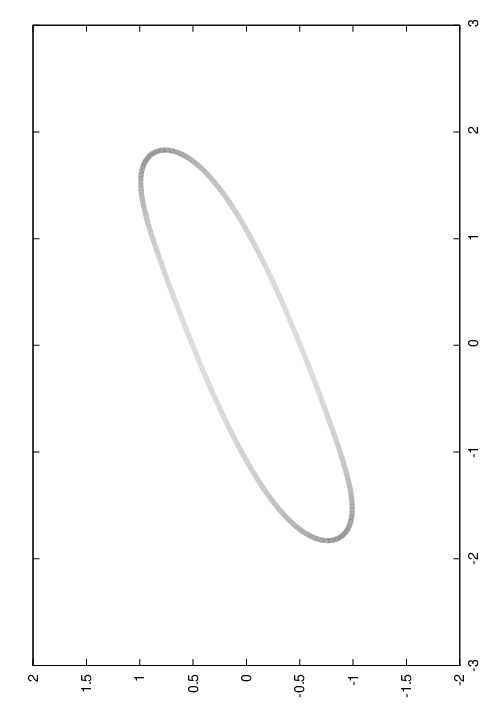}
\includegraphics[angle=-90,width=\localwidth]{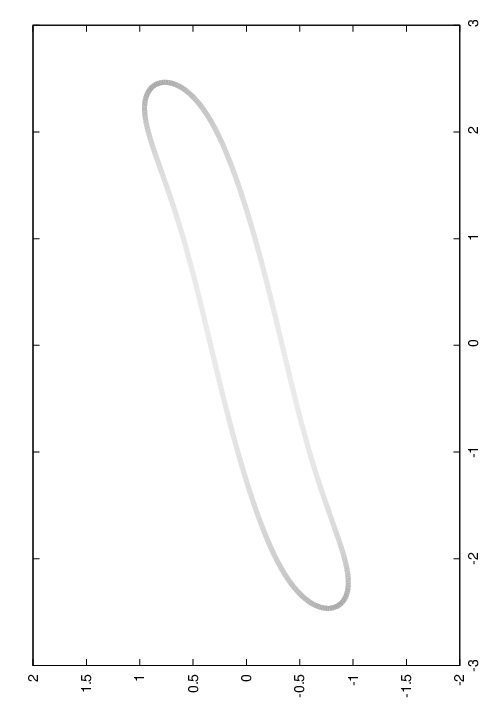}
\includegraphics[angle=-90,width=\localwidth]{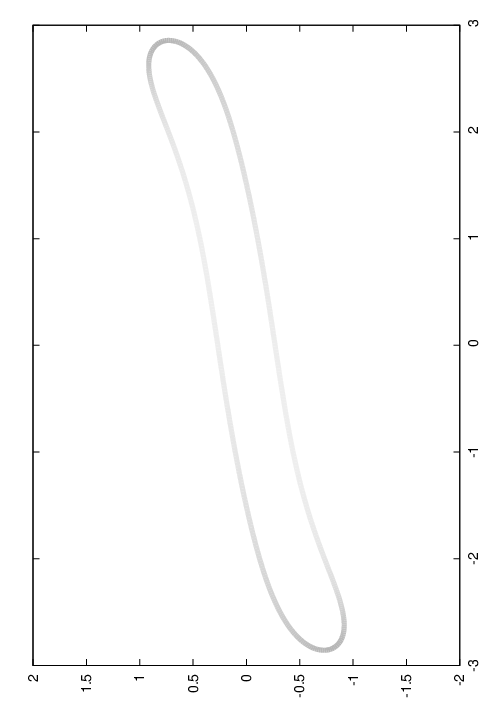}
\caption{(2\,adapt$_{9,4}$)
The time evolution of a drop in shear flow for (\ref{eq:gamma1}) with 
$\beta = 0$ (top), $\beta = 0.25$ (middle) and $\beta = 0.5$ (bottom). 
Plots are at times $t=0,\,4,\,8,\,12$. 
The grey scales linearly with the surfactant concentration ranging from 
0.2 (white) to 1.6 (black).
}
\label{fig:2dLai_nr}
\end{figure}%

In the next experiment we choose the nonlinear equation of state
(\ref{eq:gamma2}) with $\psi_\infty = \frac1\beta$; see also 
\citet[Fig.\ 6]{LaiTH08}. 
We show the evolutions of the drop for $\beta = 0.25$ and for $\beta = 0.5$
in
Figure~\ref{fig:2dLain_nr}. A detailed comparison of the final drop shapes for
the two equations of state (\ref{eq:gamma1},b) can be seen in 
Figure~\ref{fig:2dLaiLain_nr}. As expected, the difference between the 
simulations for the two equations of state are more pronounced for the larger
value of $\beta$.
\begin{figure}
\center
\newcommand\localwidth{0.24\textwidth}
\includegraphics[angle=-90,width=\localwidth]{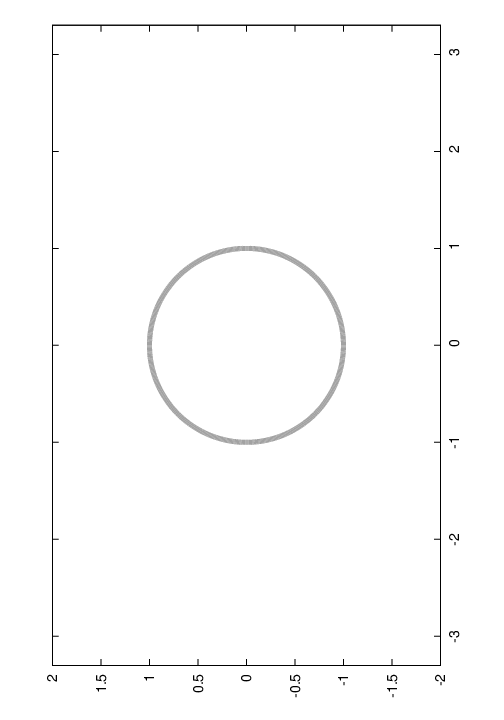}
\includegraphics[angle=-90,width=\localwidth]{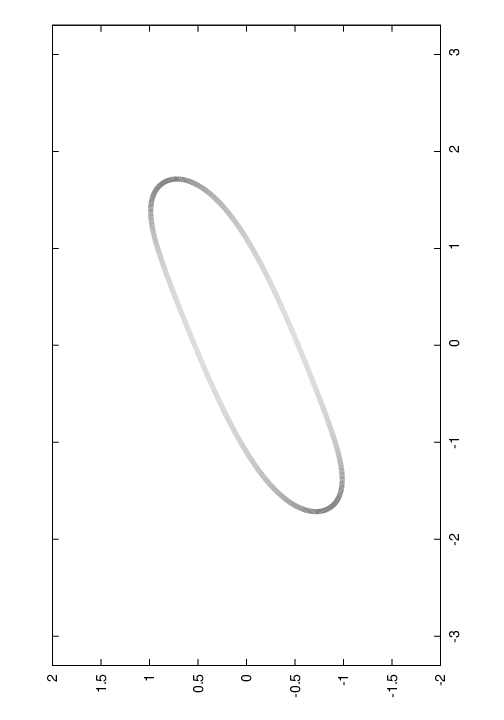}
\includegraphics[angle=-90,width=\localwidth]{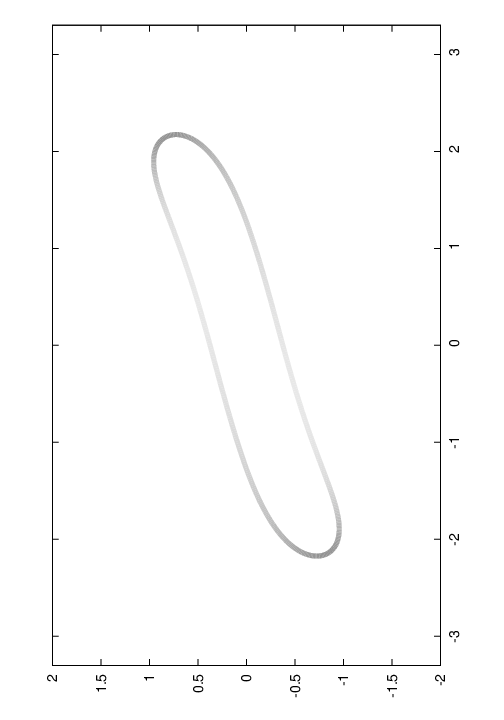}
\includegraphics[angle=-90,width=\localwidth]{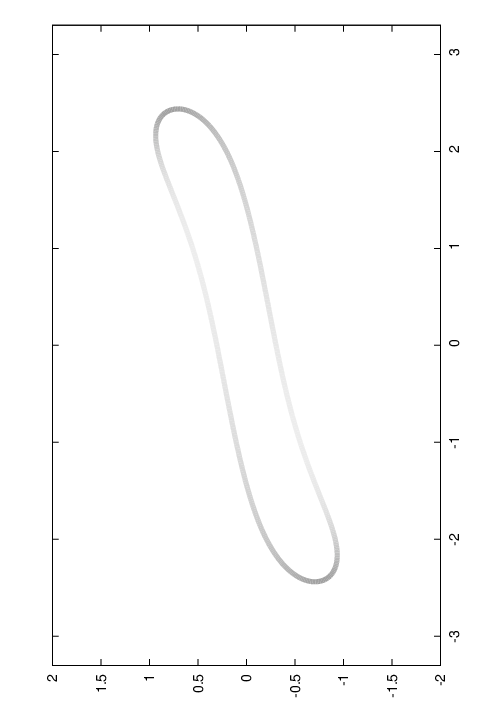}
\includegraphics[angle=-90,width=\localwidth]{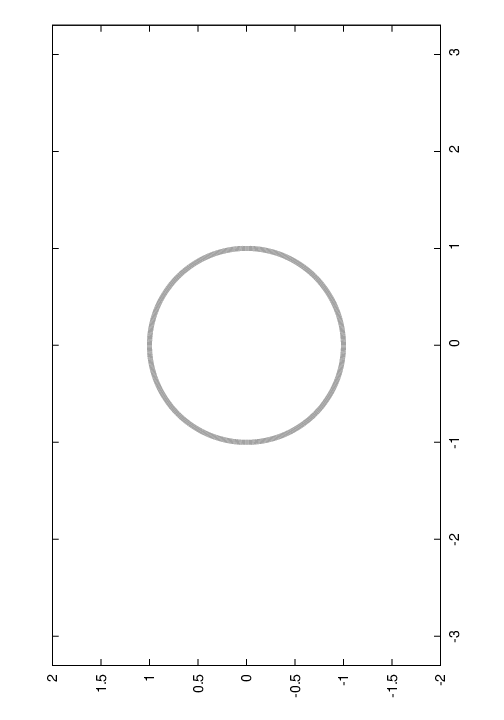}
\includegraphics[angle=-90,width=\localwidth]{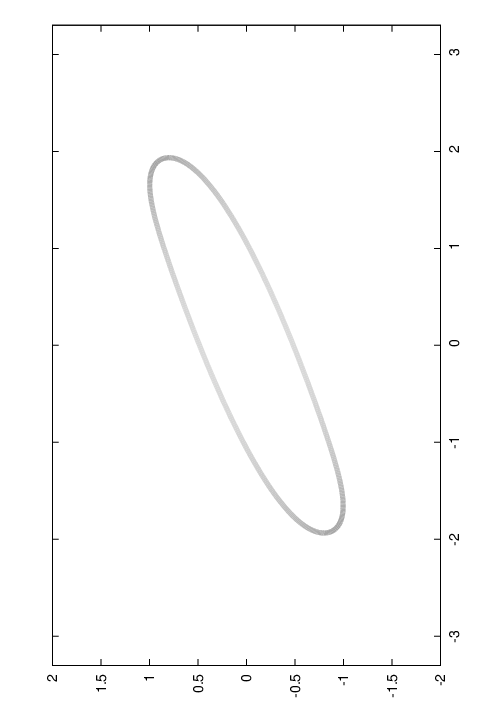}
\includegraphics[angle=-90,width=\localwidth]{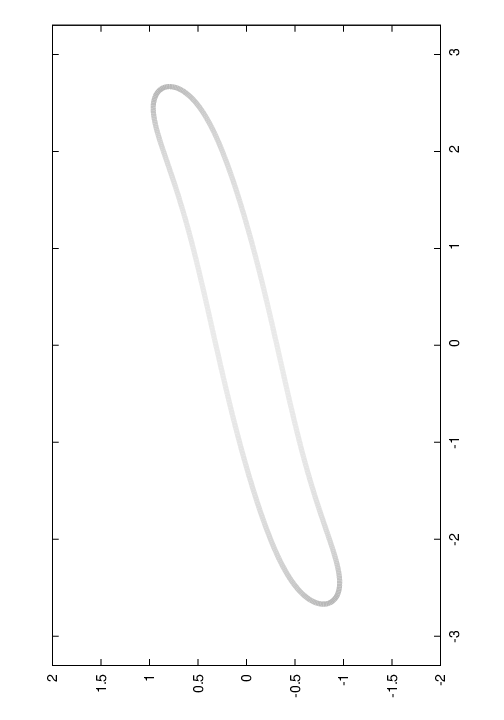}
\includegraphics[angle=-90,width=\localwidth]{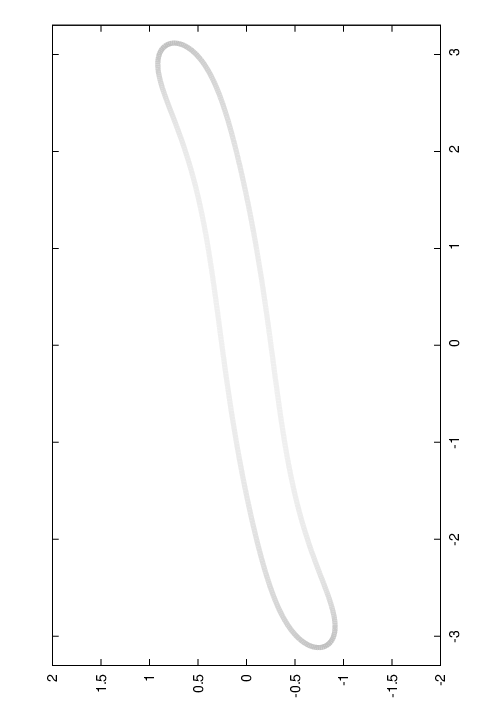}
\caption{(2\,adapt$_{9,4}$)
The time evolution of a drop in shear flow for (\ref{eq:gamma2}) with
$\beta = 0.25$ (top) and $\beta = 0.5$ (bottom). 
Plots are at times $t=0,\,4,\,8,\,12$. 
The grey scales linearly with the surfactant concentration ranging from 
0.2 (white) to 1.6 (black).
}
\label{fig:2dLain_nr}
\end{figure}%
\begin{figure}
\center
\newcommand\localwidth{0.48\textwidth}
\includegraphics[angle=-90,width=\localwidth]{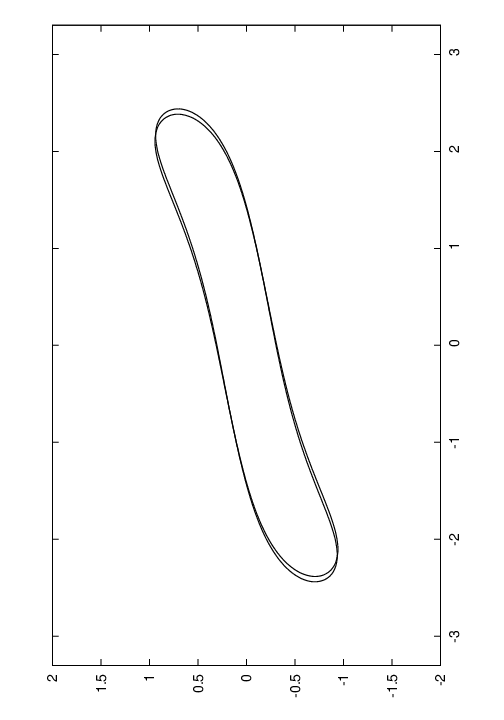}
\includegraphics[angle=-90,width=\localwidth]{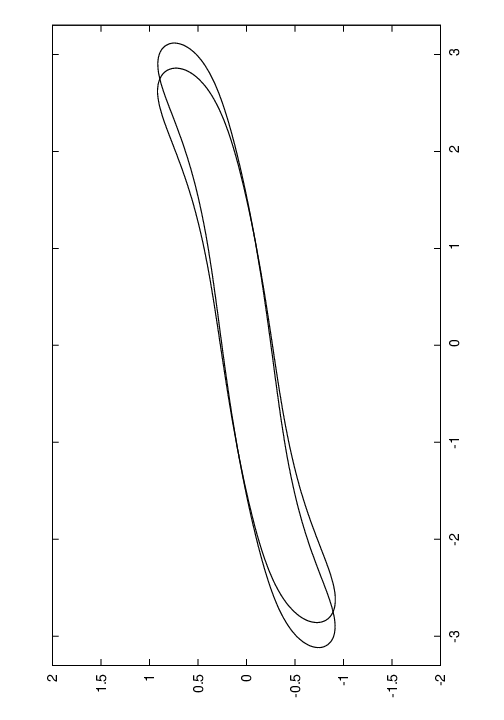}
\caption{(2\,adapt$_{9,4}$)
Comparison of the final drop shapes in shear flow for a linear
(\ref{eq:gamma1}) and a nonlinear (\ref{eq:gamma2}) equation of state with
$\beta = 0.25$ (left) and $\beta = 0.5$ (right). 
In each case the shape for (\ref{eq:gamma2}) is more elongated.
}
\label{fig:2dLaiLain_nr}
\end{figure}%

In Figure~\ref{fig:Fs} we compare the previously used (\ref{eq:gamma1}) 
with $\beta = 0.5$ and (\ref{eq:gamma2}) with $\psi_\infty^{-1} = \beta = 0.5$ 
to (\ref{eq:gamma2}) with $\beta = 0.5$ and $\psi_\infty = 1.3$. This
indicates that the initial drop should now be more unstable.
However, the evolution is not very different to what we saw before, see
Figure~\ref{fig:2dLain1.3}. This is despite the maximum discrete surfactant
concentration being $\approx 1.08$, which means that the discrete surface
tension $\gamma(\Psi^m)$ at times is negative. In fact, the observed minimum
value is $ < -0.03$, compare with Figure~\ref{fig:Fs}, but this posed no
problem for our numerical method.
\begin{figure}
\center
\newcommand\localwidth{0.48\textwidth}
\includegraphics[angle=-90,width=\localwidth]{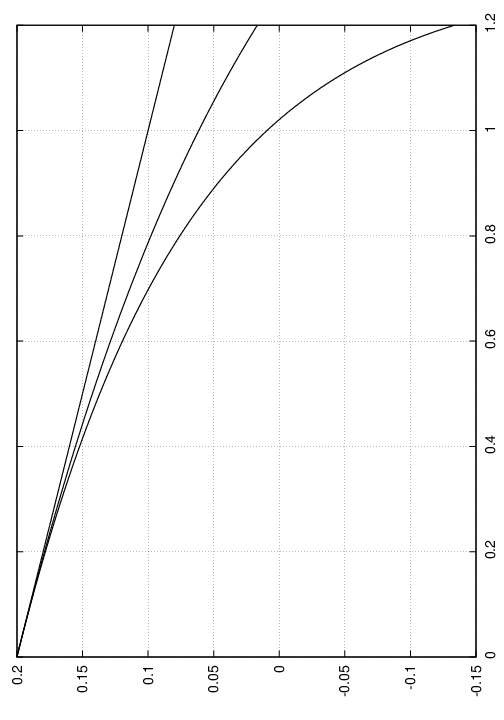}
\caption{($\beta=0.5$)
Plots of $\gamma(r)$ for the linear equation of state (\ref{eq:gamma1}) and
the nonlinear equation of state (\ref{eq:gamma2}) with $\psi_\infty=2$ and 
$1.3$.}
\label{fig:Fs}
\end{figure}%
\begin{figure}
\center
\newcommand\localwidth{0.24\textwidth}
\includegraphics[angle=-90,width=\localwidth]{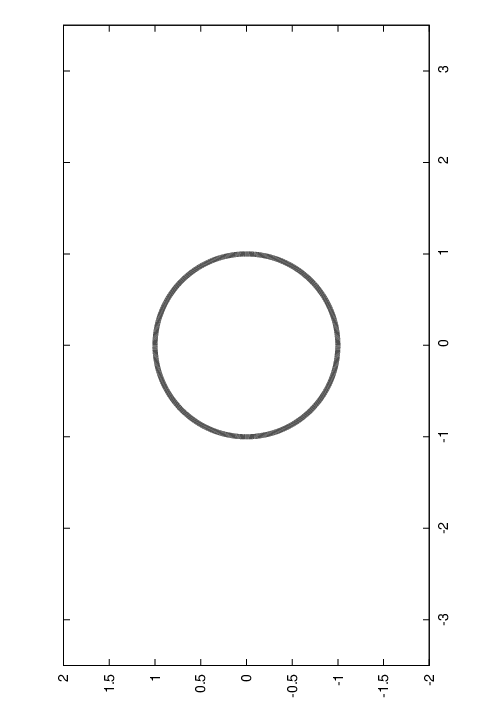}
\includegraphics[angle=-90,width=\localwidth]{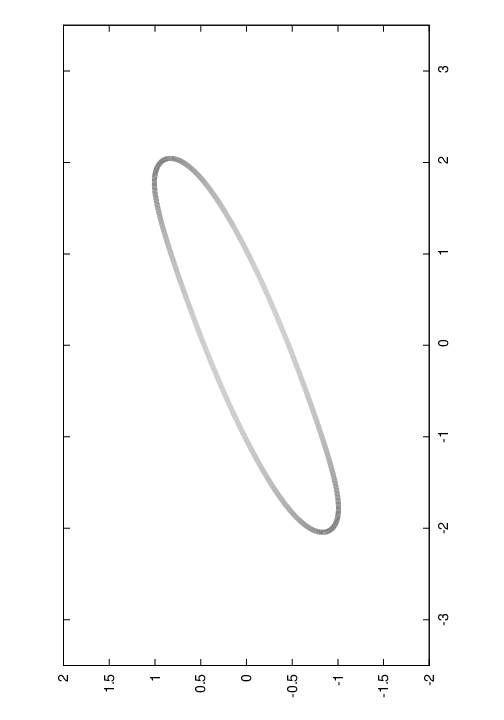}
\includegraphics[angle=-90,width=\localwidth]{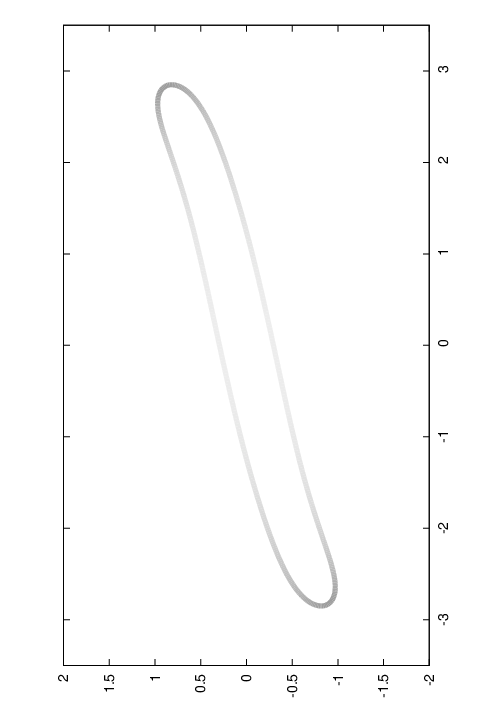}
\includegraphics[angle=-90,width=\localwidth]{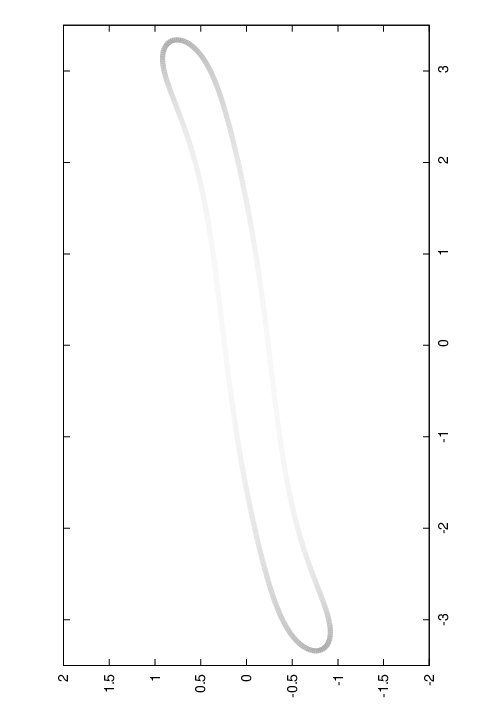}
\caption{(2\,adapt$_{9,4}$)
The time evolution of a drop in shear flow for (\ref{eq:gamma2}) with
$\beta=0.5$ 
and $\psi_\infty = 1.3$. 
Plots are at times $t=0,\,4,\,8,\,12$. 
The grey scales linearly with the surfactant concentration ranging from 
0.3 (white) to 1.1 (black).
}
\label{fig:2dLain1.3}
\end{figure}%

On returning back to the linear equation of state (\ref{eq:gamma1}), 
we also present a numerical simulation for 
different densities and viscosities. In particular, we leave all the parameters 
as in (\ref{eq:Lai}), but now choose
\begin{equation*}
\rho_+ = 10\,,\quad \rho_- = 1\,,\quad \mu_+ = 1\,,\quad \mu_- = 0.1\,.
\end{equation*}
We show the evolution of the drop in Figure~\ref{fig:2dLai_rhomu} for $\beta =
0$, $0.25$ and $0.5$. In contrast to Figure~\ref{fig:2dLai_nr}, the presence of
surfactant has very little impact on the shape of the drop
here. However, the interfaces in Figure~\ref{fig:2dLai_rhomu} are
more distorted and have higher curvatures at the ends, which is a
well-known fact when the viscosity of the drop is much less than the one of
the surrounding fluid, see \cite{RenardyRC02}.

\begin{figure}
\center
\newcommand\localwidth{0.24\textwidth}
\includegraphics[angle=-90,width=\localwidth]{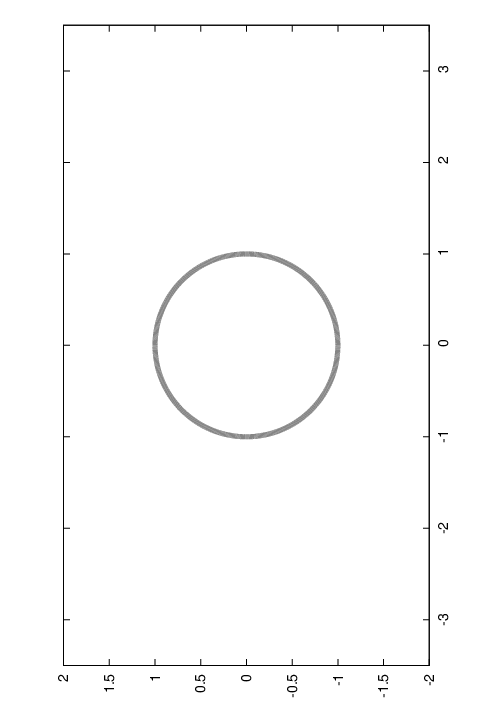}
\includegraphics[angle=-90,width=\localwidth]{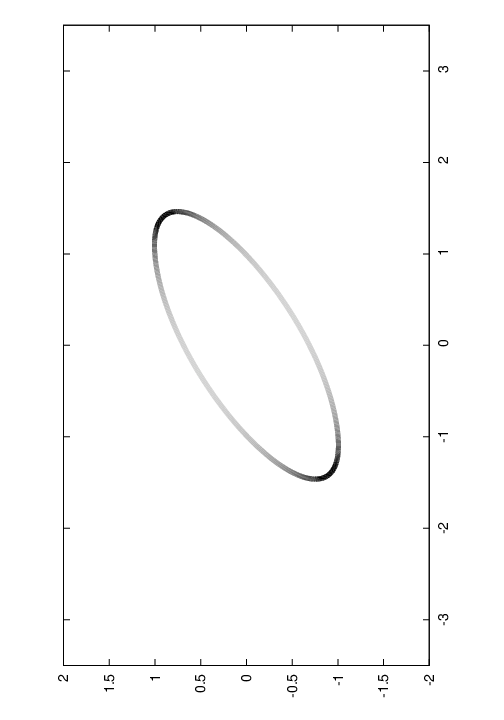}
\includegraphics[angle=-90,width=\localwidth]{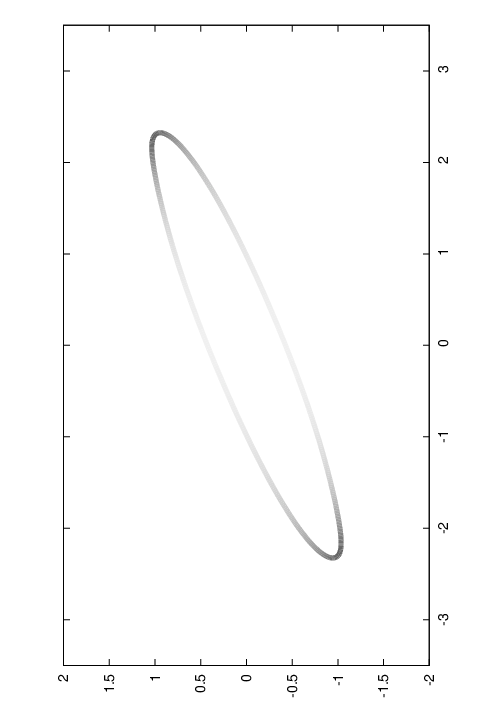}
\includegraphics[angle=-90,width=\localwidth]{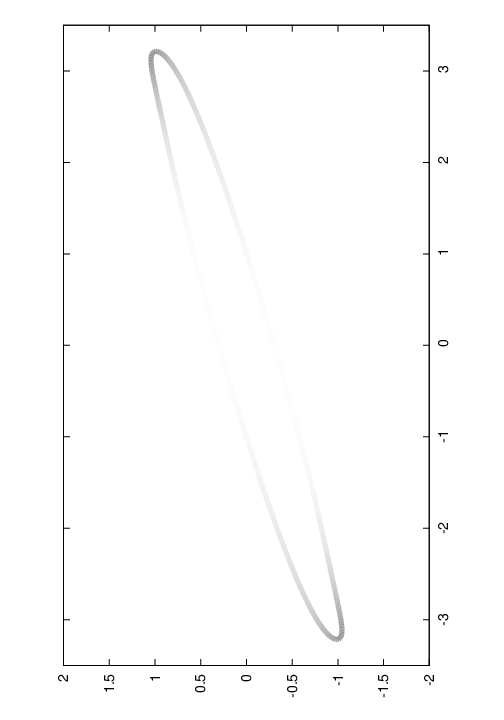}
\includegraphics[angle=-90,width=\localwidth]{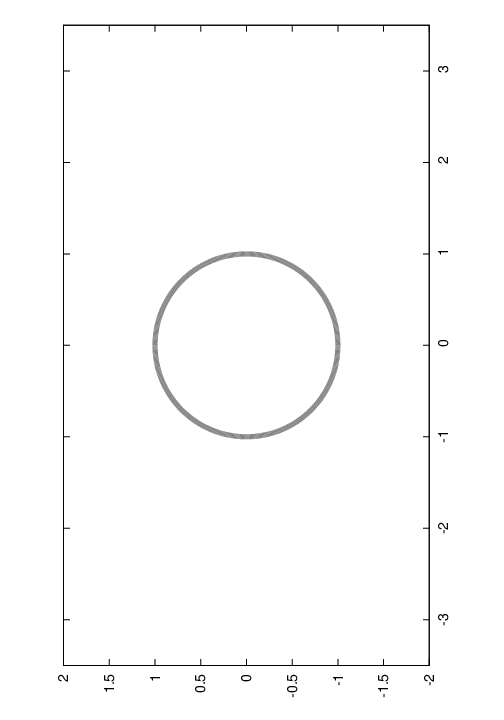}
\includegraphics[angle=-90,width=\localwidth]{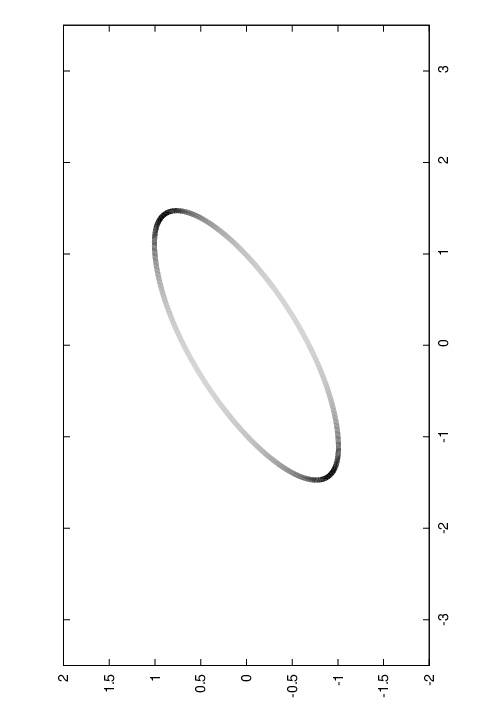}
\includegraphics[angle=-90,width=\localwidth]{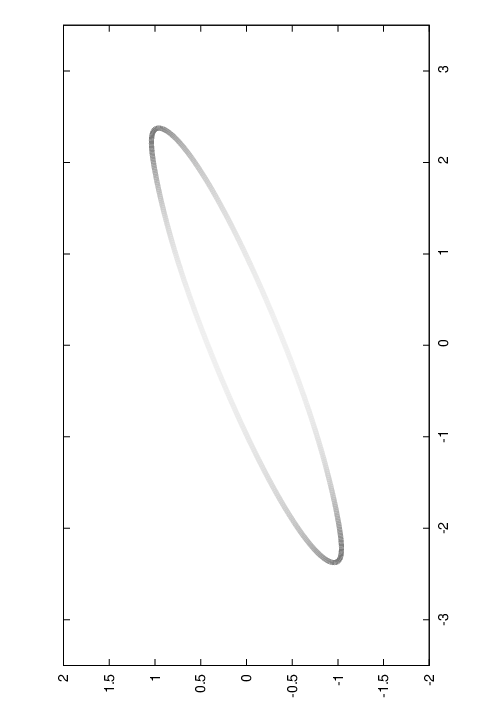}
\includegraphics[angle=-90,width=\localwidth]{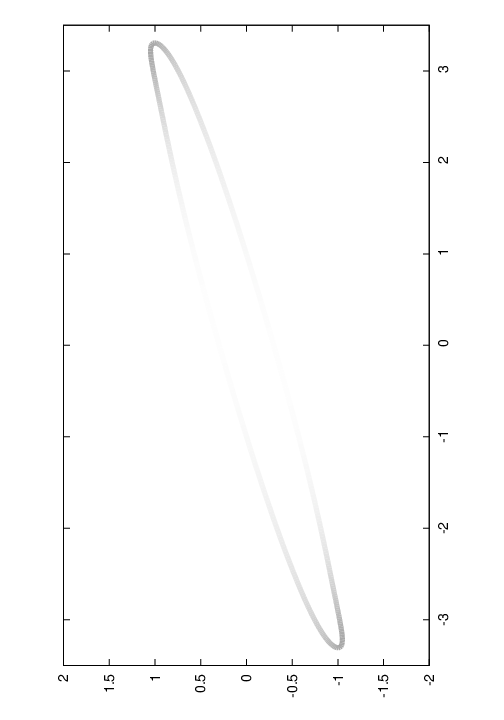}
\includegraphics[angle=-90,width=\localwidth]{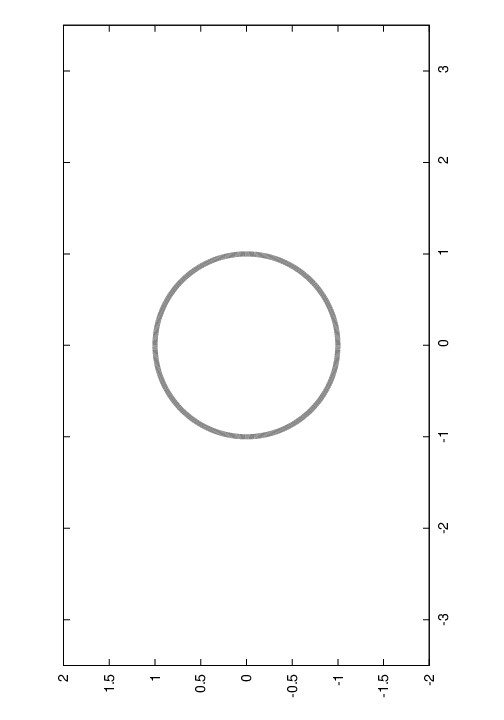}
\includegraphics[angle=-90,width=\localwidth]{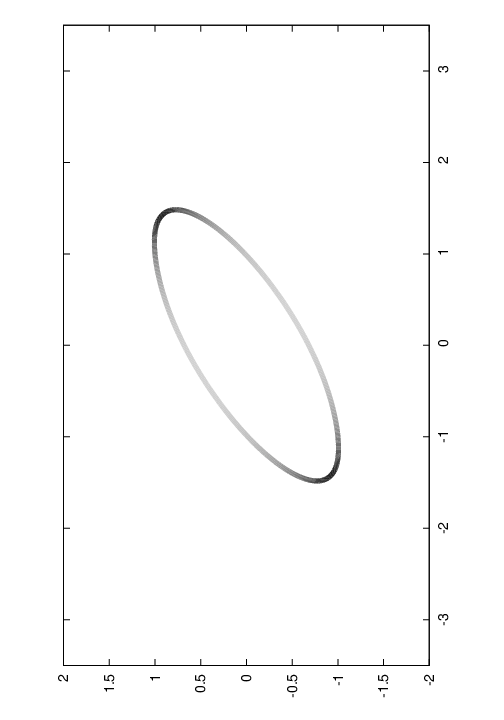}
\includegraphics[angle=-90,width=\localwidth]{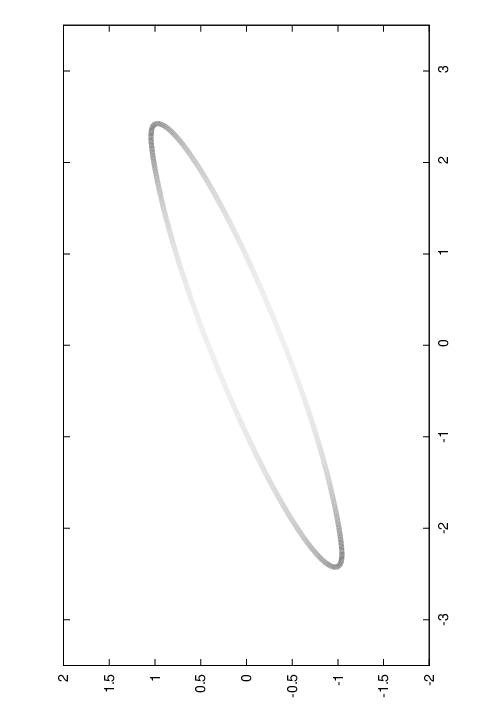}
\includegraphics[angle=-90,width=\localwidth]{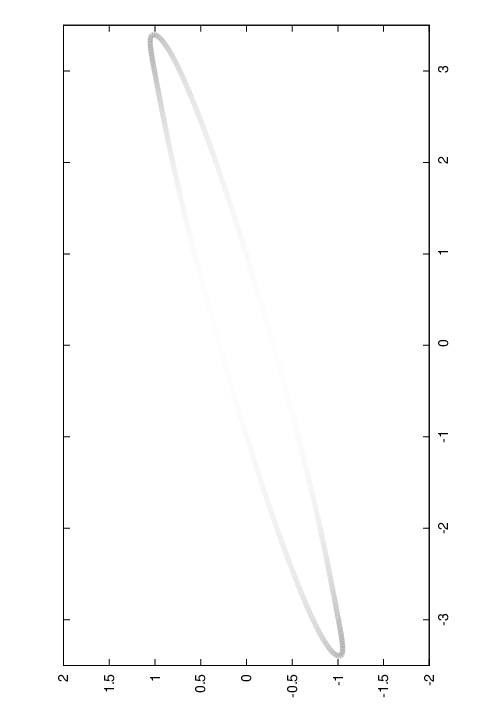}
\caption{(2\,adapt$_{9,4}$)
The time evolution of a drop in shear flow for (\ref{eq:gamma1}) with 
$\beta = 0$ (top), $\beta = 0.25$ (middle) and $\beta = 0.5$ (bottom). 
Here $\rho_+ = 10$, $\rho_- = 1$, $\mu_+ = 1$, $\mu_- = 0.1$.
Plots are at times $t=0,\,2,\,4,\,6$. 
The grey scales linearly with the surfactant concentration ranging from 
0.3 (white) to 1.3 (black).
}
\label{fig:2dLai_rhomu}
\end{figure}%

\subsection{Numerical simulations in 3d} \label{sec:62}

In this section we consider some numerical simulations for two-phase flow with
insoluble surfactant in three space dimensions. Here we will always report on
simulations for our preferred scheme (\ref{eq:HGa}--e).

\subsubsection{Rising bubble benchmark problem 1} \label{sec:332}
Here we consider the natural 3d analogue of the problem in \S\ref{sec:612}.
To this end, we let $\Omega =
(0,1) \times (0,1) \times (0.2)$ with 
$\partial_1\Omega = [0,1] \times [0,1] \times \{0,2\}$ and 
$\partial_2\Omega = \partial\Omega \setminus \partial_1\Omega$.
Moreover, we set $T=3$, $\Gamma_0 = \{ \vec z \in \R^3 : |\vec z -
(\frac12, \frac12, \frac12)^T| = \frac14\}$, and choose the physical 
parameters as in (\ref{eq:Hysing1}).
The time interval chosen for the simulation is again $[0,T]$ with $T=3$.
For the surfactant problem we choose the parameters $\Ds = 0.1$ and 
(\ref{eq:gamma1}) with $\beta = 0.5$.

Some quantitative values for the evolution are given in 
Table~\ref{tab:3ddataa}, where 
we have introduced the natural extensions of the 
quantities defined in (\ref{eq:benchmarkm}). In particular,
the discrete approximations of the
$x_3$-component of the bubble's centre of mass and the ``degree of sphericity''
are defined by
\begin{align*}
z_c^m & = \frac1{\mathcal{L}^3(\Omega_-^m)}\,\int_{\Omega_-^m} x_3 \dL3
= \frac3{\int_{\Gamma^m} \vec X^m\,.\,\vec \nu^m \dH{2}}
\int_{\Gamma^m} \tfrac12\,(\vec X^m \,.\,\vec\ek_3)^2\,
(\vec\nu^m \,.\,\vec\ek_3) \dH{2} \,,
\quad \nonumber \\ 
\strikes^m & = \pi^\frac13\,[6\,\mathcal{L}^3(\Omega_-^m)]^\frac23\,
[\mathcal{H}^{2}(\Gamma^m)]^{-1} 
\,. 
\end{align*}
\begin{table}
\center
\begin{tabular}{l|r|r}
\hline
& adapt$_{5,2}$ & adapt$_{6,3}$ \\
\hline 
$\Mloss$ & 
  0.0\% & 0.0\% \\
$\strikes_{\min}$ & 
 0.9570 & 0.9508 \\
$t_{\strikes = \strikes_{\min}}$ & 
 3.0000 & 3.0000 \\
$V_{c,\max}$ & 
 0.3822 & 0.3845 \\
$t_{V_c = V_{c,\max}}$ & 
 1.1930 & 1.0790 \\
$z_c(t=3)$ & 
 1.5515 & 1.5555 \\
\hline
\end{tabular} \qquad\qquad
\begin{tabular}{l|r|r}
\hline
& adapt$_{5,2}$ & adapt$_{6,3}$ \\
\hline 
$\Mloss$ & 
  0.0\% & 0.0\% \\
$\strikes_{\min}$ & 
 0.9348 & 0.9297 \\
$t_{\strikes = \strikes_{\min}}$ & 
 2.9300 & 2.9970 \\
$V_{c,\max}$ & 
 0.3252 & 0.3296 \\
$t_{V_c = V_{c,\max}}$ & 
 0.8160 & 0.8960 \\
$z_c(t=3)$ & 
 1.3807 & 1.3902 \\
\hline
\end{tabular}
\caption{Some quantitative results for the 3d benchmark problem 1.
Without surfactant (left) and with surfactant (right).}
\label{tab:3ddataa}
\end{table}%

In what follows we present some visualizations of the numerical results for 
the runs with adapt$_{6,3}$. A comparison of the final meshes for the runs with
and without surfactant can be seen in Figure~\ref{fig:3dcomp05}, while
the discrete surfactant concentration for the run with surfactant
can be seen in Figure~\ref{fig:3dbubble05}.

\begin{figure}
\center
\hspace*{-2.1cm}
\includegraphics[angle=-90,width=0.5\textwidth]{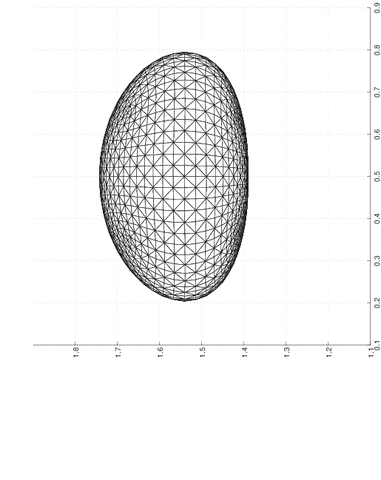}
\includegraphics[angle=-90,width=0.5\textwidth]{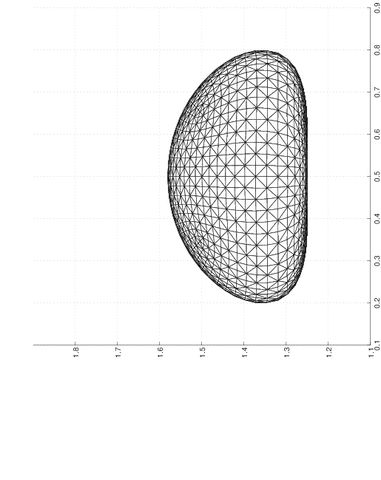}
\caption{(adapt$_{6,3}$)
Side view of the final bubble for the 3d benchmark problem 1 at time $T=3$. 
Without surfactant (left) and with surfactant (right).}
\label{fig:3dcomp05}
\end{figure}%
\begin{figure}
\center
\includegraphics[angle=-0,width=0.4\textwidth]{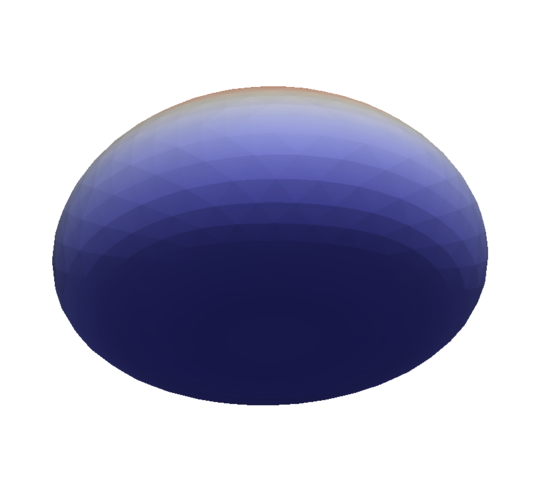}
\qquad\qquad
\includegraphics[angle=-0,width=0.4\textwidth]{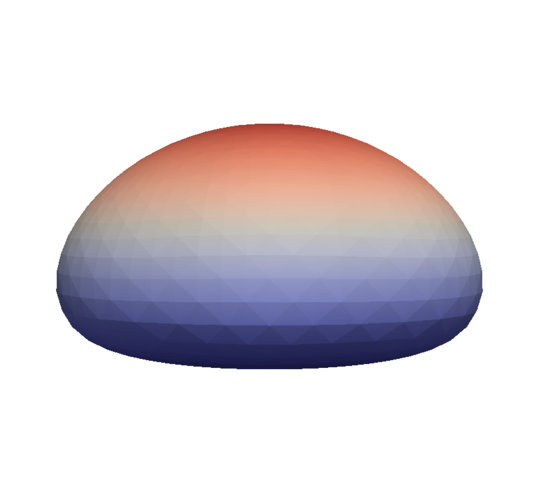}
\caption{(adapt$_{6,3}$)
The final surfactant concentration $\Psi^M$ on $\Gamma^M$.
Here the colour ranges from red (0.5) to blue (1.2).
}
\label{fig:3dbubble05}
\end{figure}%

\subsubsection{Bubble in shear flow}
In this subsection we report on the 3d analogues of the computations shown in
Figure~\ref{fig:2dLai_nr}.
In particular, in Figure~\ref{fig:3dLai_nr} we show shear flow experiments
on the domain $\Omega = (-5,5)\times (-2,2)^2$ with
$\partial\Omega=\partial_1\Omega$ and $\vec g(\vec z) = (\frac12\,z_3,0,0)^T$.
The physical parameters are as in (\ref{eq:Lai}), and we compare the 
evolutions for the linear equation of state (\ref{eq:gamma1}) for
(i) $\beta = 0$, (ii) $\beta = 0.25$ and (iii) $\beta = 0.5$.
As the
discretization parameters we choose adapt$_{5,2}^\star$, which are the
same as for ${\rm adapt}_{5,2}$, apart from $\tau = 0.01$ and
$(K_\Gamma,J_\Gamma) = (1538, 3072)$, i.e.\ adapt$_{5,2}^\star$ uses a larger
time step size and a finer interface mesh compared to ${\rm adapt}_{5,2}$.
Our three dimensional results turn out to be very similar to the two
dimensional results in Figure~\ref{fig:2dLai_nr};
see Figure~\ref{fig:3dLai_nr} for more details.
\begin{figure}
\center
\newcommand\localwidth{0.24\textwidth}
\includegraphics[angle=-0,width=\localwidth]{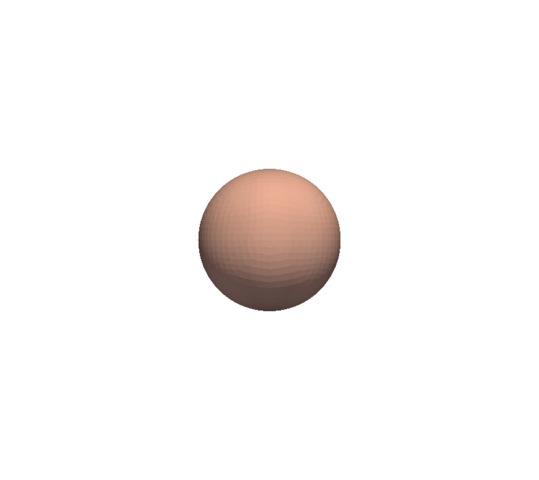}
\includegraphics[angle=-0,width=\localwidth]{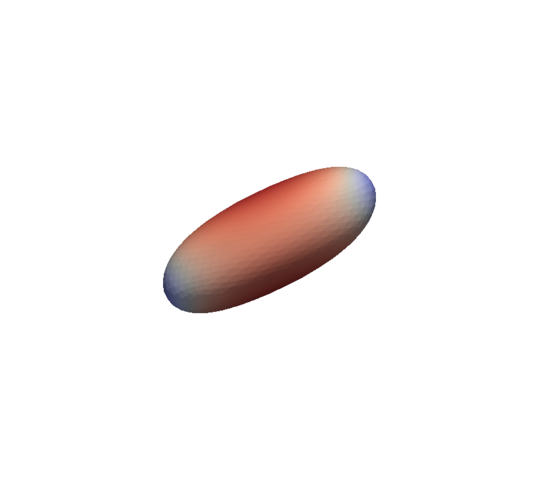}
\includegraphics[angle=-0,width=\localwidth]{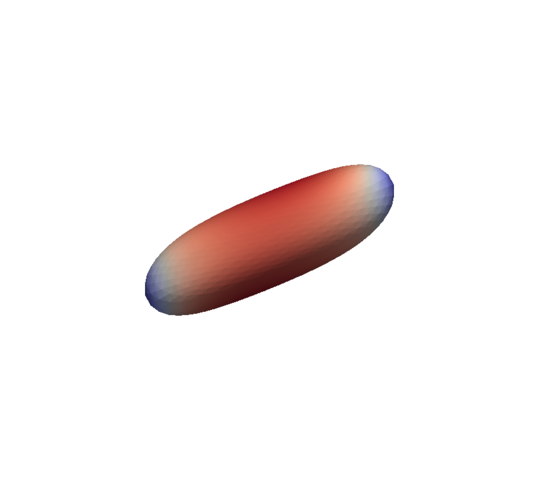}
\includegraphics[angle=-0,width=\localwidth]{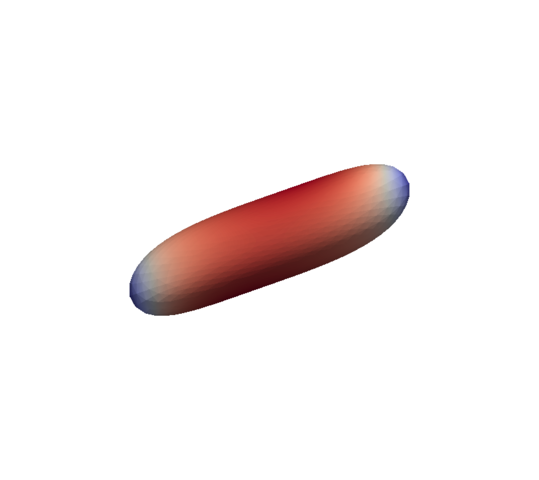}
\includegraphics[angle=-0,width=\localwidth]{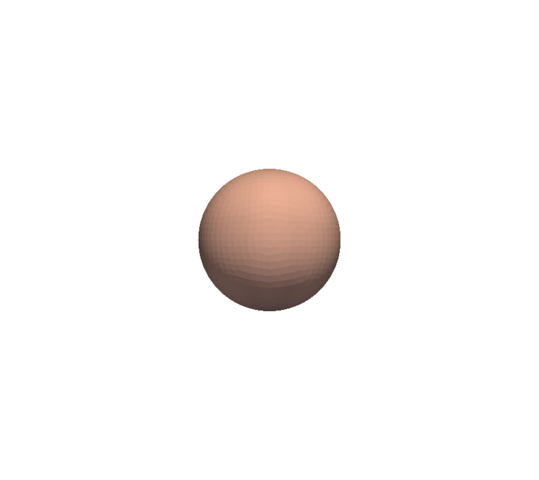}
\includegraphics[angle=-0,width=\localwidth]{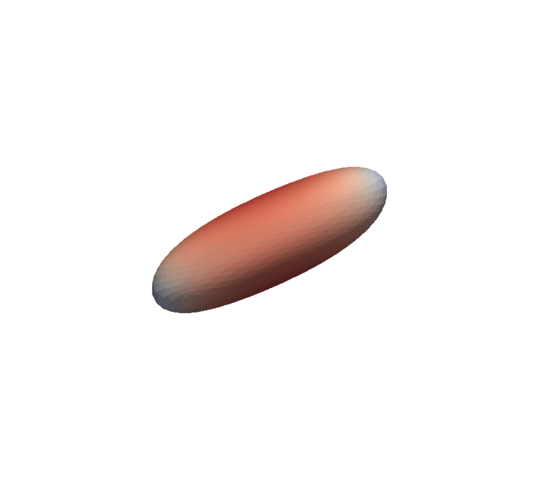}
\includegraphics[angle=-0,width=\localwidth]{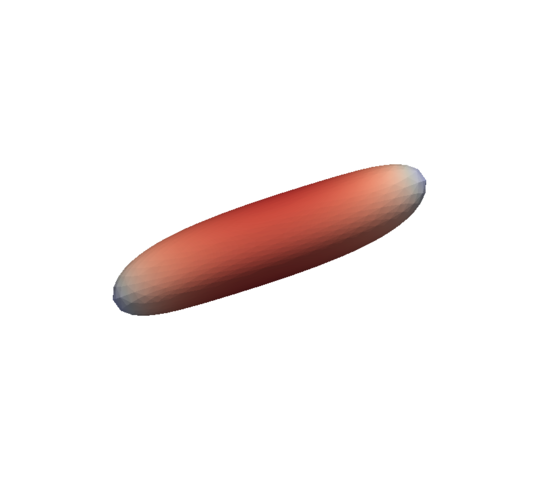}
\includegraphics[angle=-0,width=\localwidth]{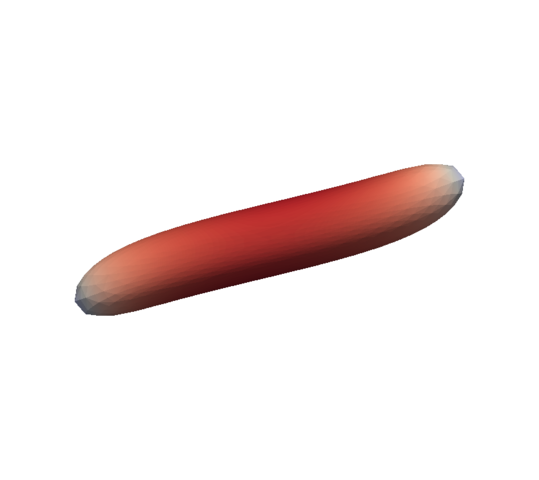}
\includegraphics[angle=-0,width=\localwidth]{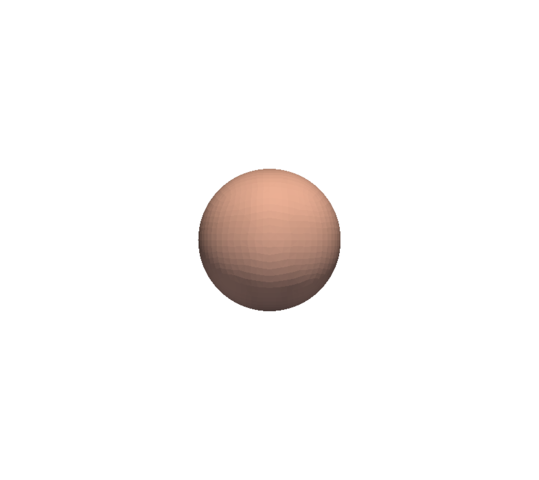}
\includegraphics[angle=-0,width=\localwidth]{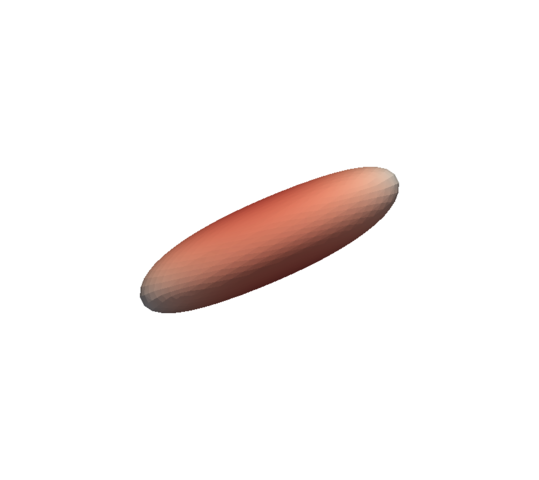}
\includegraphics[angle=-0,width=\localwidth]{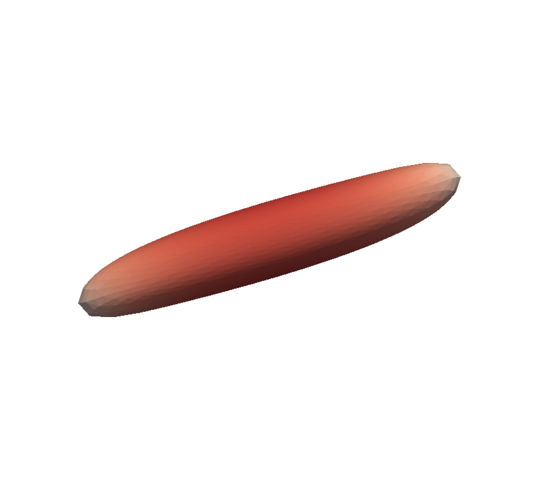}
\includegraphics[angle=-0,width=\localwidth]{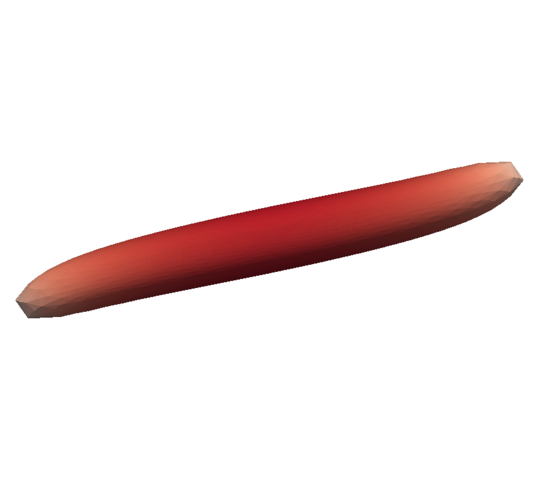}
\caption{(adapt$_{5,2}^\star$)
The discrete surfactant concentrations $\Psi^m$ at times $t=0,4,8,12$
for $\beta=0$ (top), $\beta = 0.25$ (middle) and $\beta = 0.5$ (bottom).
The colour ranges from red (0.5) to blue (1.9).}
\label{fig:3dLai_nr}
\end{figure}%

\begin{appendix}
\setcounter{equation}{0}
\renewcommand{\theequation}{\Alph{section}.\arabic{equation}}
\section{Exact solution for the advection diffusion equation} \label{sec:A}
Following \citet[Example~7.3]{DziukE07}, we present a true solution to the 
inhomogeneous advection diffusion equation 
\begin{equation} \label{eq:1surff}
\matpartu\,\psi + \psi\,\nabs\,.\vec u - \Delta_s\,\psi =
f_\Gamma \qquad \mbox{on } \Gamma(t)\,,
\end{equation}
recall (\ref{eq:1surf}), in a situation where 
the fluid velocity $\vec u$, and hence the evolution of
$\Gamma(t)$, is given.
The surface is given by 
$\Gamma(t) = \{ \vec z \in \R^d : \phi(\vec z, t) = 1 \}$, where
\begin{equation*} 
\phi(\vec z, t) = [a(t)]^{-1}\,z_1^2 + \sum_{i=2}^d z_i^2 \,,
\end{equation*}
so that the moving surface $\Gamma(t)$
is an ellipsoid with time dependent $x_1$-axis. Here we choose 
$$a(t) = 1 + \sin(\pi\,t)\,,$$ 
and as the parameterization $\vec x (\cdot, t) :
\mathbb{S}^{d-1} \to \Gamma(t)$, where 
$\mathbb{S}^{d-1} := \{ \vec q \in \R^d : |\vec q| = 1\}$, we choose
$$
\vec x ( \vec q, t) = [a(t)]^{\frac12}\,q_1\,\vec\ek_1 +
\sum_{i=2}^d q_i\,\vec\ek_i \qquad\forall\ \vec q \in \mathbb{S}^{d-1}\,,\quad
t \in \Rgeq \,.
$$
On recalling (\ref{eq:V}), for the fluid velocity we naturally choose
\begin{equation} \label{eq:trueu}
\vec u (\vec z, t) = \tfrac12\,[a(t)]^{-1}\,a'(t)\,z_1\,\vec\ek_1 \qquad\ 
\vec z \in \Omega\,,
\end{equation}
so that
\begin{equation*} 
\vec u (\vec z, t) \!\mid_{\Gamma(t)} = \vec{\mathcal{V}} (\vec z, t) 
\qquad\ \vec z \in \Gamma(t)\,.
\end{equation*}
As an exact solution we choose $\psi(\vec z, t) = e^{-6\,t}\,z_1\,z_2$,
and hence it remains to calculate the right hand side $f_\Gamma$ in 
(\ref{eq:1surff}) for our chosen $\psi$ and $\vec u$. To this end we note that
\begin{equation} \label{eq:truef}
f_\Gamma =
\matpartu\,\psi + \psi\,\nabs\,.\vec u - \Delta_s\,\psi\,,
\end{equation}
with
\begin{align*} 
\matpartu\,\psi 
& = ( \tfrac12\,[a(t)]^{-1}\,a'(t) - 6 )\,\psi \,, \\
\psi\,\nabs\,.\vec u & = \tfrac12\,[a(t)]^{-1}\,a'(t)\,(1 - \nu_1^2)\,\psi\,,\\
-\Delta_s\, \psi (\vec z, t) & =
e^{-6\,t} \left[ 
2\,\nu_1\,\nu_2 - (\nu_1\,z_2 + \nu_2\,z_1)\,\varkappa(\vec z, t) \right],
\end{align*}
where 
$\vec \nu (\vec z, t) = \frac{\nabla \phi(\vec z, t)}{|\nabla \phi(\vec z, t|}
\in \R^d$ denotes the normal to $\Gamma(t)$ at $\vec z \in \Gamma(t)$, and
where
\begin{equation} \label{eq:k1}
\varkappa = -\nabs\,.\,\vec \nu = -\nabla\,.\,\vec \nu 
= - |\nabla\,\phi|^{-1}\, \sum_{i=1}^d \left[ 
\left(1 - |\nabla\,\phi|^{-2}\, 
\left(\frac{\partial \phi}{\partial z_i}\right)^2 \right)
\frac{\partial^2 \phi}{\partial z_i^2}\right]
\end{equation}
denotes the mean curvature of $\Gamma(t)$. Of course, for our example we have
that $\nabla\,\phi (\vec z, t) = 2\,[a(t)]^{-1}\,z_1\,\vec\ek_1 +
2\sum_{i=2}^d z_i\,\vec\ek_i$, and so (\ref{eq:k1}) reduces to
\begin{equation*} 
\varkappa = - 2\,|\nabla\,\phi|^{-1}\, [a(t)]^{-1}\,
\left(1 - 4\,|\nabla\,\phi|^{-2}\, [a(t)]^{-2}\,z_1^2 \right)
- 2\,|\nabla\,\phi|^{-1} \sum_{i=2}^d 
\left(1 - 4\,|\nabla\,\phi|^{-2}\,z_i^2 \right) .
\end{equation*}

\end{appendix}

\def\soft#1{\leavevmode\setbox0=\hbox{h}\dimen7=\ht0\advance \dimen7
  by-1ex\relax\if t#1\relax\rlap{\raise.6\dimen7
  \hbox{\kern.3ex\char'47}}#1\relax\else\if T#1\relax
  \rlap{\raise.5\dimen7\hbox{\kern1.3ex\char'47}}#1\relax \else\if
  d#1\relax\rlap{\raise.5\dimen7\hbox{\kern.9ex \char'47}}#1\relax\else\if
  D#1\relax\rlap{\raise.5\dimen7 \hbox{\kern1.4ex\char'47}}#1\relax\else\if
  l#1\relax \rlap{\raise.5\dimen7\hbox{\kern.4ex\char'47}}#1\relax \else\if
  L#1\relax\rlap{\raise.5\dimen7\hbox{\kern.7ex
  \char'47}}#1\relax\else\message{accent \string\soft \space #1 not
  defined!}#1\relax\fi\fi\fi\fi\fi\fi}


\begin{thebibliography}{}

\bibitem[Alke and Bothe(2009)Alke and Bothe]{AlkeB09}
Alke, A. and Bothe, D. (2009).
\newblock 3{D} numerical modeling of soluble surfactant at fluidic interfaces
  based on the volume-of-fluid method.
\newblock {\em FDMP Fluid Dyn. Mater. Process.}, {\bf 5}(4), 345--372.

\bibitem[B{\"a}nsch(2001)B{\"a}nsch]{Bansch01}
B{\"a}nsch, E. (2001).
\newblock Finite element discretization of the {N}avier--{S}tokes equations
  with a free capillary surface.
\newblock {\em Numer. Math.}, {\bf 88}(2), 203--235.

\bibitem[Barrett and N\"urnberg(2004)Barrett and N\"urnberg]{surf2d}
Barrett, J.~W. and N\"urnberg, R. (2004).
\newblock Convergence of a finite-element approximation of surfactant spreading
  on a thin film in the presence of van der {W}aals forces.
\newblock {\em IMA J. Numer. Anal.}, {\bf 24}(2), 323--363.

\bibitem[Barrett {\em et~al.}(2003)Barrett, Garcke, and N\"urnberg]{surf}
Barrett, J.~W., Garcke, H., and N\"urnberg, R. (2003).
\newblock Finite element approximation of surfactant spreading on a thin film.
\newblock {\em SIAM J. Numer. Anal.}, {\bf 41}(4), 1427--1464.

\bibitem[Barrett {\em et~al.}(2007)Barrett, Garcke, and N\"urnberg]{triplej}
Barrett, J.~W., Garcke, H., and N\"urnberg, R. (2007).
\newblock A parametric finite element method for fourth order geometric
  evolution equations.
\newblock {\em J. Comput. Phys.}, {\bf 222}(1), 441--462.

\bibitem[Barrett {\em et~al.}(2008)Barrett, Garcke, and N\"urnberg]{gflows3d}
Barrett, J.~W., Garcke, H., and N\"urnberg, R. (2008).
\newblock On the parametric finite element approximation of evolving
  hypersurfaces in {${\mathbb R}^3$}.
\newblock {\em J. Comput. Phys.}, {\bf 227}(9), 4281--4307.

\bibitem[Barrett {\em et~al.}(2010)Barrett, Garcke, and N\"urnberg]{dendritic}
Barrett, J.~W., Garcke, H., and N\"urnberg, R. (2010).
\newblock On stable parametric finite element methods for the {S}tefan problem
  and the {M}ullins--{S}ekerka problem with applications to dendritic growth.
\newblock {\em J. Comput. Phys.}, {\bf 229}(18), 6270--6299.

\bibitem[Barrett {\em et~al.}(2013a)Barrett, Garcke, and N\"urnberg]{spurious}
Barrett, J.~W., Garcke, H., and N\"urnberg, R. (2013a).
\newblock Eliminating spurious velocities with a stable approximation of
  viscous incompressible two-phase {S}tokes flow.
\newblock {\em Comput. Methods Appl. Mech. Engrg.}, {\bf 267}, 511--530.

\bibitem[Barrett {\em et~al.}(2013b)Barrett, Garcke, and N\"urnberg]{fluidfbp}
Barrett, J.~W., Garcke, H., and N\"urnberg, R. (2013b).
\newblock A stable parametric finite element discretization of two-phase
  {N}avier--{S}tokes flow.
\newblock \url{http://arxiv.org/abs/1308.3335}.

\bibitem[B\"aumler and B\"ansch(2013)B\"aumler and B\"ansch]{BaumlerB13}
B\"aumler, K. and B\"ansch, E. (2013).
\newblock A subspace projection method for the implementation of interface
  conditions in a single-drop flow problem.
\newblock {\em J. Comput. Phys.}, {\bf 252}, 438--457.

\bibitem[Bothe and Pr{\"u}ss(2010)Bothe and Pr{\"u}ss]{BotheP10a}
Bothe, D. and Pr{\"u}ss, J. (2010).
\newblock Stability of equilibria for two-phase flows with soluble surfactant.
\newblock {\em Quart. J. Mech. Appl. Math.}, {\bf 63}(2), 177--199.

\bibitem[Bothe {\em et~al.}(2005)Bothe, Pr{\"u}ss, and Simonett]{BothePS05}
Bothe, D., Pr{\"u}ss, J., and Simonett, G. (2005).
\newblock Well-posedness of a two-phase flow with soluble surfactant.
\newblock In {\em Nonlinear elliptic and parabolic problems\/}, volume~64 of
  {\em Progr. Nonlinear Differential Equations Appl.}, pages 37--61.
  Birkh\"auser, Basel.

\bibitem[Bothe {\em et~al.}(2012)Bothe, K{\"o}hne, and
  Pr{\"u}ss]{BotheKP12preprint}
Bothe, D., K{\"o}hne, M., and Pr{\"u}ss, J. (2012).
\newblock On two-phase flows with soluble surfactant.
\newblock \url{http://arxiv.org/abs/1210.8131}.

\bibitem[Deckelnick {\em et~al.}(2005)Deckelnick, Dziuk, and
  Elliott]{DeckelnickDE05}
Deckelnick, K., Dziuk, G., and Elliott, C.~M. (2005).
\newblock Computation of geometric partial differential equations and mean
  curvature flow.
\newblock {\em Acta Numer.}, {\bf 14}, 139--232.

\bibitem[Drumright-Clarke and Renardy(2004)Drumright-Clarke and
  Renardy]{Drumright-ClarkeR04}
Drumright-Clarke, M.~A. and Renardy, Y. (2004).
\newblock The effect of insoluble surfactant at dilute concentration on drop
  breakup under shear with inertia.
\newblock {\em Phys. Fluids\/}, {\bf 16}(1), 14--21.

\bibitem[Dziuk(1991)Dziuk]{Dziuk91}
Dziuk, G. (1991).
\newblock An algorithm for evolutionary surfaces.
\newblock {\em Numer. Math.}, {\bf 58}(6), 603--611.

\bibitem[Dziuk and Elliott(2007)Dziuk and Elliott]{DziukE07}
Dziuk, G. and Elliott, C.~M. (2007).
\newblock Finite elements on evolving surfaces.
\newblock {\em IMA J. Numer. Anal.}, {\bf 27}(2), 262--292.

\bibitem[Dziuk and Elliott(2013)Dziuk and Elliott]{DziukE13}
Dziuk, G. and Elliott, C.~M. (2013).
\newblock Finite element methods for surface {PDE}s.
\newblock {\em Acta Numer.}, {\bf 22}, 289--396.

\bibitem[Elliott and Styles(2012)Elliott and Styles]{ElliottS12}
Elliott, C.~M. and Styles, V. (2012).
\newblock An {ALE} {ESFEM} for solving {PDE}s on evolving surfaces.
\newblock {\em Milan J. Math.}, {\bf 80}(2), 469--501.

\bibitem[Elliott {\em et~al.}(2011)Elliott, Stinner, Styles, and
  Welford]{ElliottSSW11}
Elliott, C.~M., Stinner, B., Styles, V., and Welford, R. (2011).
\newblock Numerical computation of advection and diffusion on evolving diffuse
  interfaces.
\newblock {\em IMA J. Numer. Anal.}, {\bf 31}(3), 786--812.

\bibitem[Engblom {\em et~al.}(2013)Engblom, Do-Quang, Amberg, and
  Tornberg]{EngblomD-QAT13}
Engblom, S., Do-Quang, M., Amberg, G., and Tornberg, A.-K. (2013).
\newblock On diffuse interface modeling and simulation of surfactants in
  two-phase fluid flow.
\newblock {\em Commun. Comput. Phys.}, {\bf 14}(4), 879--915.

\bibitem[Ganesan and Tobiska(2009)Ganesan and Tobiska]{GanesanT09a}
Ganesan, S. and Tobiska, L. (2009).
\newblock A coupled arbitrary {L}agrangian--{E}ulerian and {L}agrangian method
  for computation of free surface flows with insoluble surfactants.
\newblock {\em J. Comput. Phys.}, {\bf 228}(8), 2859--2873.

\bibitem[Garcke and Wieland(2006)Garcke and Wieland]{GarckeW06}
Garcke, H. and Wieland, S. (2006).
\newblock Surfactant spreading on thin viscous films: nonnegative solutions of
  a coupled degenerate system.
\newblock {\em SIAM J. Math. Anal.}, {\bf 37}(6), 2025--2048.

\bibitem[Garcke {\em et~al.}(2013)Garcke, Lam, and Stinner]{GarckeLS13preprint}
Garcke, H., Lam, K.~F., and Stinner, B. (2013).
\newblock Diffuse interface modelling of soluble surfactants in two-phase flow.
\newblock (to appear in Comm. Math. Sci., see also
  \url{http://arxiv.org/abs/1303.2559}).

\bibitem[Girault and Raviart(1986)Girault and Raviart]{GiraultR86}
Girault, V. and Raviart, P.-A. (1986).
\newblock {\em Finite Element Methods for {N}avier--{S}tokes\/}.
\newblock Springer-Verlag, Berlin.

\bibitem[Gro{\ss} and Reusken(2011)Gro{\ss} and Reusken]{GrossR11}
Gro{\ss}, S. and Reusken, A. (2011).
\newblock {\em Numerical methods for two-phase incompressible flows\/},
  volume~40 of {\em Springer Series in Computational Mathematics\/}.
\newblock Springer-Verlag, Berlin.

\bibitem[Hysing {\em et~al.}(2009)Hysing, Turek, Kuzmin, Parolini, Burman,
  Ganesan, and Tobiska]{HysingTKPBGT09}
Hysing, S., Turek, S., Kuzmin, D., Parolini, N., Burman, E., Ganesan, S., and
  Tobiska, L. (2009).
\newblock Quantitative benchmark computations of two-dimensional bubble
  dynamics.
\newblock {\em Internat. J. Numer. Methods Fluids\/}, {\bf 60}(11), 1259--1288.

\bibitem[James and Lowengrub(2004)James and Lowengrub]{JamesL04}
James, A.~J. and Lowengrub, J. (2004).
\newblock A surfactant-conserving volume-of-fluid method for interfacial flows
  with insoluble surfactant.
\newblock {\em J. Comput. Phys.}, {\bf 201}(2), 685--722.

\bibitem[Khatri and Tornberg(2011)Khatri and Tornberg]{KhatriT11}
Khatri, S. and Tornberg, A.-K. (2011).
\newblock A numerical method for two phase flows with insoluble surfactants.
\newblock {\em Comput. \& Fluids\/}, {\bf 49}(1), 150--165.

\bibitem[Lai {\em et~al.}(2008)Lai, Tseng, and Huang]{LaiTH08}
Lai, M.-C., Tseng, Y.-H., and Huang, H. (2008).
\newblock An immersed boundary method for interfacial flows with insoluble
  surfactant.
\newblock {\em J. Comput. Phys.}, {\bf 227}(15), 7279--7293.

\bibitem[Muradoglu and Tryggvason(2008)Muradoglu and Tryggvason]{MuradogluT08}
Muradoglu, M. and Tryggvason, G. (2008).
\newblock A front-tracking method for computation of interfacial flows with
  soluble surfactants.
\newblock {\em J. Comput. Phys.}, {\bf 227}(4), 2238--2262.

\bibitem[Pozrikidis(2004)Pozrikidis]{Pozrikidis04}
Pozrikidis, C. (2004).
\newblock A finite-element method for interfacial surfactant transport, with
  application to the flow-induced deformation of a viscous drop.
\newblock {\em J. Engrg. Math.}, {\bf 49}(2), 163--180.

\bibitem[Renardy {\em et~al.}(2002)Renardy, Renardy, and Cristini]{RenardyRC02}
Renardy, Y.~Y., Renardy, M., and Cristini, V. (2002).
\newblock A new volume-of-fluid formulation for surfactants and simulations of
  drop deformation under shear at a low viscosity ratio.
\newblock {\em European J. Mech. B Fluids\/}, {\bf 21}(1), 49--59.

\bibitem[Schmidt and Siebert(2005)Schmidt and Siebert]{Alberta}
Schmidt, A. and Siebert, K.~G. (2005).
\newblock {\em Design of Adaptive Finite Element Software: The Finite Element
  Toolbox {ALBERTA}\/}, volume~42 of {\em Lecture Notes in Computational
  Science and Engineering\/}.
\newblock Springer-Verlag, Berlin.

\bibitem[Teigen and Munkejord(2010)Teigen and Munkejord]{TeigenM10}
Teigen, K.~E. and Munkejord, S.~T. (2010).
\newblock Influence of surfactant on drop deformation in an electric field.
\newblock {\em Phys. Fluids\/}, {\bf 22}(11), 112104.

\bibitem[Teigen {\em et~al.}(2009)Teigen, Li, Lowengrub, Wang, and
  Voigt]{TeigenLLWV09}
Teigen, K.~E., Li, X., Lowengrub, J., Wang, F., and Voigt, A. (2009).
\newblock A diffuse-interface approach for modeling transport, diffusion and
  adsorption/desorption of material quantities on a deformable interface.
\newblock {\em Commun. Math. Sci.}, {\bf 7}(4), 1009--1037.

\bibitem[Xu {\em et~al.}(2006)Xu, Li, Lowengrub, and Zhao]{XuLLZ06}
Xu, J.-J., Li, Z., Lowengrub, J., and Zhao, H. (2006).
\newblock A level-set method for interfacial flows with surfactant.
\newblock {\em J. Comput. Phys.}, {\bf 212}(2), 590--616.

\bibitem[Xu {\em et~al.}(2012)Xu, Yang, and Lowengrub]{XuYL12}
Xu, J.-J., Yang, Y., and Lowengrub, J. (2012).
\newblock A level-set continuum method for two-phase flows with insoluble
  surfactant.
\newblock {\em J. Comput. Phys.}, {\bf 231}(17), 5897--5909.

\bibitem[Xu {\em et~al.}(2014)Xu, Huang, Lai, and Li]{XuHLL14}
Xu, J.-J., Huang, Y., Lai, M.-C., and Li, Z. (2014).
\newblock A coupled immersed interface and level set method for
  three-dimensional interfacial flows with insoluble surfactant.
\newblock {\em Commun. Comput. Phys.}, {\bf 15}(2), 451--469.

\bibitem[Yang and James(2007)Yang and James]{YangJ07}
Yang, X. and James, A.~J. (2007).
\newblock An arbitrary {L}agrangian--{E}ulerian ({ALE}) method for interfacial
  flows with insoluble surfactants.
\newblock {\em FDMP Fluid Dyn. Mater. Process.}, {\bf 3}(1), 65--95.

\end{thebibliography}
\end{document}